\DeclareMathOperator{\prox}{prox}
\newcommand{\norm}[1]{\Vert #1 \Vert}
\newcommand{\card}[1]{\vert #1 \vert}
\newcommand{\intervalle}[2]{\llbracket #1, #2 \rrbracket}
\newcommand{\appref}[1]{Appendix~\ref{#1}}
\newcommand{\suppref}[1]{Supplementary Material}
\def\rvb{{\mathbf{b}}}
\def\rvu{{\mathbf{i}}}
\def\rvp{{\mathbf{p}}}
\def\rvu{{\mathbf{u}}}
\def\rvv{{\mathbf{v}}}
\def\rvx{{\mathbf{x}}}
\def\rvy{{\mathbf{y}}}
\def\rvz{{\mathbf{z}}}
\def\rmA{{\mathbf{A}}}
\def\vv{{\bm{v}}}
\def\mA{{\bm{A}}}
\DeclareMathAlphabet{\mathsfit}{\encodingdefault}{\sfdefault}{m}{sl}
\SetMathAlphabet{\mathsfit}{bold}{\encodingdefault}{\sfdefault}{bx}{n}
\def\gK{{\mathcal{K}}}
\def\gL{{\mathcal{L}}}
\def\gS{{\mathcal{S}}}
\def\sR{{\mathbb{R}}}
\newcommand{\R}{\mathbb{R}}
\DeclareMathOperator*{\argmin}{arg\,min}
\DeclareMathOperator{\sign}{sign}
\theoremstyle{definition}
\newtheorem{definition}{Definition}[section]
\theoremstyle{plain}
\newtheorem{theorem}{Theorem}[section]
\theoremstyle{plain}
\newtheorem{proposition}{Proposition}[section]
\newtheorem{lemma}{Lemma}[section]
\newtheorem{hypothesis}{Hypothesis}[section]
\theoremstyle{remark}
\newtheorem{remark}{Remark}[section]
\theoremstyle{definition}
\newtheorem{example}{Example}[section]
\begin{document}

\title{Proximal Operators of Sorted Nonconvex Penalties}

\author{Anne Gagneux, Mathurin Massias, and Emmanuel Soubies
\thanks{AG and MM are with Inria, ENS de Lyon, CNRS, Université Claude Bernard Lyon 1, LIP, 69342,
Lyon Cedex 07, France (e-mails: anne.gagneux@ens-lyon.fr, mathurin.massias@inria.fr). ES is with IRIT, Université de Toulouse, CNRS, 31000 Toulouse, France (e-mail: emmanuel.soubies@irit.fr). This work was supported by the ANR EROSION (ANR-22-CE48-0004) and the RT IASIS
through the PROSSIMO grant.}}

\IEEEpubid{}

\maketitle

\begin{abstract}

  This work studies the problem of sparse signal recovery with automatic grouping of variables.
  To this end, we investigate sorted nonsmooth penalties as a regularization approach for generalized linear models.
  We focus on a family of \emph{sorted nonconvex  penalties} which generalizes the Sorted $\ell_1$ Norm (SLOPE).
  These penalties are designed to promote clustering of variables due to their sorted nature, while the nonconvexity reduces the shrinkage of coefficients.
  Our goal is to provide efficient ways to compute their proximal operator, enabling the use of popular proximal algorithms to solve composite optimization problems with this choice of sorted penalties.
  We distinguish between two classes of problems: \emph{the weakly convex case} where computing the proximal operator remains a convex problem, and \emph{the nonconvex case} where computing the proximal operator becomes a challenging nonconvex combinatorial problem.
  For the weakly convex case (\emph{e.g.} sorted MCP and SCAD), we explain how the Pool Adjacent Violators (PAV) algorithm can exactly compute the proximal operator.
  For the nonconvex case (\emph{e.g.} sorted $\ell_q$ with $q \in ]0,1[$), we show that a slight modification of this algorithm turns out to be remarkably efficient to tackle the computation of the proximal operator. We also present new theoretical insights on the minimizers of the nonconvex proximal problem.
  We demonstrate the practical interest of using such penalties on several experiments.
\end{abstract}

\begin{IEEEkeywords}
nonconvex optimization, penalty, regularization, sparse, clustering.
\end{IEEEkeywords}

\section{Introduction}
\label{section:introduction}

Signal recovery can be cast as the following variational problem
\begin{equation}
    \label{pb:composite}
    \argmin_{\rvx \in \sR^p} g(\rvx) + \varPsi (\rvx) \, ,
\end{equation}
where $g : \sR^p \to \sR$ is a data-fidelity term and the \emph{penalty} $\varPsi$ is a regularization term that should embed some properties of the solution.
Among them, sparsity and structure are particularly useful for a model as they improve its interpretability and decrease its complexity.
Sparsity is most usually enforced through a penalty term favoring variable selection, i.e. solutions that use only a subset of features.
Such sparsity-promoting penalties share the common property to be nonsmooth at zero.
While being widely used, the convex $\ell_1$ norm (the Lasso \cite{tibshirani1996regression}) has the drawback of shrinking all nonzero coefficients towards 0. In the sparse recovery literature, this undesirable property is commonly called \emph{bias}.
It can be mitigated by using nonconvex penalties \cite{selesnick2014sparse}.
These include smoothly clipped absolute deviation (SCAD) \cite{fan2001variable}, minimax concave penalty (MCP)\cite{zhang2010mcp} or the log-sum penalty \cite{candes2008enhancing}.
Another very popular class of nonconvex penalties are the $\ell_q$ regularizers \cite{grasmair2008sparse}, with $q \in \left] 0, 1 \right[$.
The latter are not only nonconvex and nonsmooth (like MCP and SCAD) but also non-Lipschitz.
An alternative to using nonconvex penalties is \emph{refitting} \cite{belloni2013least,deledalle2017clear}.
This is a two-step approach where the Lasso is first used for \emph{support recovery only} and where the coefficients are then estimated solely on the identified support.
Limitations of such methods, including interpretability,  are studied in \cite{salmonrefitting}.
Here, we focus only on one-step methods where the penalty itself is designed to enforce all the desirable properties of the solution.

Beyond sparsity, one may be interested in clustering features, i.e. in assigning equal values to coefficients of correlated features.
In practice, such groups should be recovered \emph{without prior knowledge about the clusters}.
The Lasso is unfit for this purpose as it tends to only select one relevant feature from a group of correlated features.
The Fused Lasso \cite{tibshirani2005sparsity} is a remedy, but only works for features whose ordering is meaningful, as groups can only be composed of consecutive features.
Elastic-Net regression \cite{zou2003regression}, which combines $\ell_1$ and squared $\ell_2$ regularization, encourages a grouping effect, but does not enforce exact clustering.
In contrast, the OSCAR model \cite{bondell2008simultaneous} combines a $\ell_1$ and a pairwise $\ell_\infty$ penalty to simultaneously promote sparsity and equality of some coefficients, i.e. clustering.
OSCAR has been shown to be a special case of the sorted $\ell_1$ norm regularization \cite{bogdan2013,zeng2014ordered}.
As our present work heavily builds upon this penalty, we recall its definition.

\begin{definition}[SLOPE \cite{bogdan2013}, OWL \cite{zeng2014ordered}]
  \label{def:slope}
  The sorted $\ell_1$ penalty, denoted $\varPsi_{\mathrm{SLOPE}}$, writes, for $\rvx \in \sR^p$:
  \begin{equation}\label{eq:slope}
      \varPsi_{\mathrm{SLOPE}}(\rvx) = \sum_{i=1}^p \lambda_i |x_{(i)}| \, ,
  \end{equation}
  where $x_{(i)}$ is the $i$-th component of $\rvx$ sorted by non-increasing magnitude, i.e. such that $|x_{(1)}| \geq \dots \geq |x_{(p)}|$, and where $\lambda_1  \geq \dots \geq \lambda_p \geq 0$ are chosen regularization parameters.
\end{definition}

The main feature of SLOPE is its ability to exactly cluster coefficients \cite{figueiredo2014sparse,schneider2022geometry}.
Solving many issues of the Lasso, e.g. high False Discovery Rate \cite{su2017false}, SLOPE has attracted much attention in the last years.
It has found many applications both in compressed sensing \cite{el2019online}, in finance \cite{kremer2020sparse} or for neural network compression \cite{zhang2018learning}.
One line of research has focused on its statistical properties:  minimax rates \cite{su2016,bellec2018slope}, optimal design of the regularization sequence \cite{hu2019asymptotics,kos2020asymptotic}, pattern recovery guarantees \cite{bogdan2022pattern}.
Dedicated numerical algorithms were also developed, comprising screening rules \cite{larsson2020strong,elvira2023safe}, full path computation à la LARS \cite{tardivel2023solution} or hybrid coordinate descent methods \cite{larsson2023coordinate}.

Although it improves on Lasso regarding features clustering, SLOPE is convex and thus over-penalizes large coefficients.

The authors in~\cite{suzuki2024external} proposed to mitigate this drawback through the use of a specific penalty, whose proximal operator is computable as an external division of two SLOPE/OSCAR proximal operators.
Alternatively, it is tempting to use nonconvex variants of SLOPE, replacing the absolute value in \Cref{eq:slope} by a nonconvex sparse penalty.
However, one of the reasons behind the success of SLOPE is its practical usability through the availability of its proximal operator, which can be computed exactly \cite{zeng2014ordered,dupuis2022proximal}.
It comes down to solving an isotonic regression problem, which can be done efficiently using the Pool Adjacent Violators (PAV) algorithm~\cite{best1990active}.

For nonconvex sorted penalties, results are scarcer.
In \cite{feng2019sorted}, the authors studied the statistical properties of a class of sorted nonconvex penalties, including sorted MCP and sorted SCAD. They also proposed a ``majorization-minimization'' approach to deal with some sorted nonconvex penalties, using a linear convex approximation of the regularization term.

In this paper, we show how to compute the proximal operators of a wide class of nonconvex sorted penalties, including in particular sorted versions of SCAD, MCP, Log-Sum, $\ell_q$.
These penalties leverage both nonconvexity and sorting: as illustrated on \Cref{fig:denoising_solutions}, it is visible that, with equal cluster recovery performance, they lead to lower estimation error than SLOPE.
Appendix~\ref{app:details_clustering} provides more details on their clustering properties.
Being able to compute their proximal operator allows for their use in combination with any datafit, be it for regression or classification, as well as any proximal algorithm (e.g., FISTA, ADMM, primal-dual methods).

\begin{figure}[!t]
    \centering
    \includegraphics[width=\linewidth]{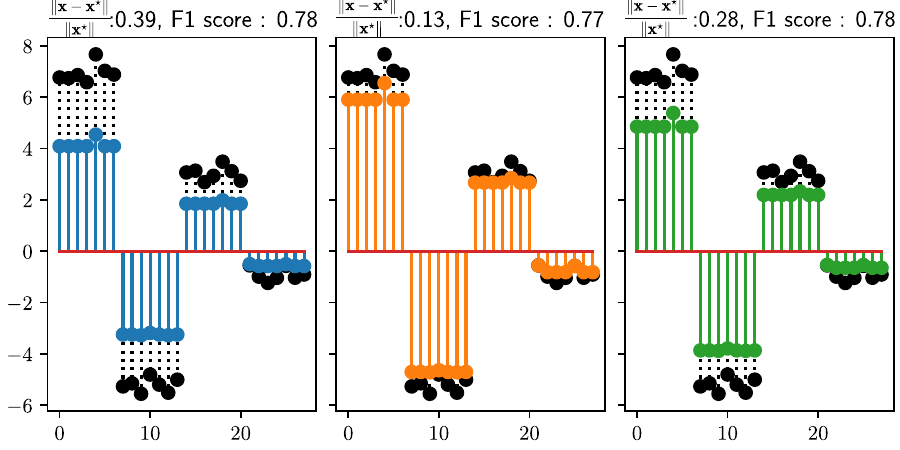}
    \caption{
        Sorted penalties for denoising: $\hat \rvx$ is the estimate obtained by applying the proximal operator of the penalty to a noisy version of the ground truth $\rvx^*$.
        With equivalent cluster recovery performance (same F1 score), sorted nonconvex penalties give amplitudes closer to the ground truth compared to convex SLOPE.
        Note that the clusters are contiguous only for visualization purposes: shuffling the noisy signal does not affect the results.
        Details on the setup and complete results are in \Cref{sub:denoising}.
        }
    \label{fig:denoising_solutions}
    \vspace{-1em}
\end{figure}

\paragraph*{Contributions and Outline} After recalling some general properties of proximal operators of sorted penalties in \Cref{section:background}, we present two novel contributions.

In \Cref{section:weakly-convex}, we consider \textbf{sorted weakly convex penalties}  (including sorted MCP, SCAD, and log-sum) in settings where computing the proximal operator comes down to a convex problem. We explain how it can be directly and exactly solved by the PAV algorithm.

In \Cref{section:nonconvex}, we consider a \textbf{general class of sorted nonconvex penalties} for which computing the proximal operator can result in a nonconvex optimization problem. This class notably includes
penalties that are not weakly convex, such as \textbf{sorted $\ell_q$ norms for $ q \in \left]0,1\right[$}.
Within this framework, we first conduct a general analysis of the variations of the scalar proximal objective function (\Cref{prop:variation_scalar}), which generalizes some peculiar results of the literature. We then establish necessary and sufficient conditions characterizing the local minimizers of the proximal problem (\Cref{theorem:ell_q_local}) and demonstrate that the PAV algorithm converges to such local minimizers (\Cref{thm:PAV_loc_cv}). Finally, we derive a necessary condition for global minimizers (\Cref{theorem:oscar_ell_q_global_nec}) and leverage it to propose a novel PAV-inspired algorithm, that we coin D-PAV (Decomposed-PAV).

Numerical experiments on denoising and regression problems are reported in \Cref{section:expe}, highlighting the benefits of sorted nonconvex penalties over SLOPE as well as the improvement of the proposed D-PAV over PAV.

\section{Background on Proximal Operator \\for Sorted Penalties}
\label{section:background}

In this section, we revisit key properties of sorted penalties, focusing on how the proximal problem simplifies to an isotonic problem.
We end this section by recalling the Pool Adjacent Violators (PAV) algorithm, which is the standard method for solving such isotonic problems.
We consider composite penalized problems that write as:
\begin{equation}
    \label{pb:composite}
    \argmin_{\rvx \in \sR^p} g(\rvx) + \varPsi (\rvx) \, ,
\end{equation}
where $g$ is a $L-$smooth convex datafit and the function $\varPsi$ is as follows.

\begin{hypothesis}
    \label{hyp:gen}
    Given a non-increasing sequence of regularization parameters $(\lambda_i)_{i \in \llbracket 1,p \rrbracket}$ such that $\lambda_1 \geq \dots \geq \lambda_p \geq 0$ (with at least one strict inequality), we consider a sorted penalty of the form
    \begin{equation}
        \varPsi : \rvx \mapsto \sum_{i=1}^p \psi(|x_{(i)}|; \lambda_i),
    \end{equation}
    where the scalar penalty function $\psi : \sR_+ \times \sR_+ \to \sR_+$ satisfies the following properties, for all $\lambda >0$,
\begin{enumerate}
    \item $\psi(\cdot ; \lambda)$ is continuous, non-decreasing and concave on $\sR_+$, and $\psi(0;\lambda) = 0$
    \item $\psi(\cdot ; \lambda)$ is continuously differentiable on $\sR_{+*}$ and $\psi'(0;\lambda) := \lim_{z \to 0} \psi'(z;\lambda) \in \sR_+ \cup \{ + \infty \}$.
\end{enumerate}
    Moreover, for all $z\geq 0$, $\psi'(z; \cdot)$ is non-decreasing where the derivative, as always in this paper, is with respect to the first argument $z$.
\end{hypothesis}

This includes in particular sorted versions of MCP, SCAD, log-sum or $\ell_q$ penalties.

A popular algorithm to solve \Cref{pb:composite} is proximal gradient descent (also referred to as forward-backward splitting \cite{combettes2005signal}), which iterates
\begin{equation}
   \label{equation:pgd_iterate}
    \rvx^{k+1} = \prox_{\eta \varPsi} \left(\rvx^k - \eta \nabla g(\rvx^k) \right) \, ,
\end{equation}
where $\eta>0$ is the stepsize; refinements such as FISTA~\cite{beck2009fast} improve the convergence speed.
Many other optimization algorithms (e.g., ADMM, primal-dual methods) rely on the proximal operator of $\varPsi$, which highlights the importance of being able to compute it efficiently.
The convergence of proximal methods in nonconvex settings is not the main focus of this paper but a vast amount of litterature is dedicated to it, both in the weakly convex regime \cite{bayram2015convergence} and the non-convex regime \cite{attouch2013convergence}.
The rest of the paper is thus devoted to the computation of $\prox_{\eta \varPsi}$, for $\varPsi$ satisfying \Cref{hyp:gen}.

\subsection{Basic Properties}

First, we present general properties of proximal operators for sorted penalties.
We recall here, in a more general form, results already known in the case of SLOPE (\Cref{def:slope}). The proofs are in Supplementary Material.
Proximal operators were first studied by \cite{moreau1965proximite} in the case of proper, lower semi-continuous (l.s.c.) and convex functions.
Because most functions under scrutiny in this work are nonconvex, we will introduce here a more general definition (referred to as \emph{proximal mapping}) following the framework of \cite[Chapter 6]{beck2017first}.
\begin{definition}[Proximal mapping]
    \label{def:prox}
The proximal mapping of a function $\varPsi : \sR^p \to (-\infty, + \infty]$ is, for any $\rvy \in \sR^p$:
\begin{equation*}
    \prox_\varPsi(\rvy) = \argmin_{\rvx \in \sR^{p}} \left\{ \varPsi(\rvx) + \frac{1}{2} \Vert \rvx - \rvy \Vert^2 \right\} \subset \sR^p \, .
\end{equation*}
\end{definition}

\begin{proposition}[{Non-emptiness of the prox under closedness and coerciveness, \cite[Thm. 6.4]{beck2017first}}]
    \label{prop:nonempty-prox}
    If $\varPsi$ is proper, l.s.c. and $\varPsi + \frac{1}{2}||\cdot -\rvx||^2$ is coercive for all $\rvx \in \sR^p$, then $\prox_\varPsi(\rvx)$ is non-empty for all $\rvx \in \sR^p$.
\end{proposition}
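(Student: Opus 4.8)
The plan is to reduce the statement to the classical (generalized) Weierstrass theorem: a proper, lower semi-continuous, coercive function on $\sR^p$ attains its infimum, so its set of minimizers is non-empty. Fix $\rvx \in \sR^p$ and set $h_\rvx(\rvu) := \varPsi(\rvu) + \frac{1}{2}\norm{\rvu - \rvx}^2$, so that by \Cref{def:prox} we have $\prox_\varPsi(\rvx) = \argmin_{\rvu \in \sR^p} h_\rvx(\rvu)$. It therefore suffices to show that the infimum of $h_\rvx$ is attained, which I would do by verifying the three hypotheses of the Weierstrass theorem for $h_\rvx$ and then running the standard minimizing-sequence argument.

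First I would check the regularity of $h_\rvx$. Since $\varPsi$ is proper there is a point where it is finite, while the quadratic term is finite everywhere, so $h_\rvx$ is proper; since $\varPsi$ is l.s.c. and $\rvu \mapsto \frac{1}{2}\norm{\rvu - \rvx}^2$ is continuous, their sum $h_\rvx$ is l.s.c.; and coercivity of $h_\rvx$ is precisely the standing hypothesis that $\varPsi + \frac{1}{2}\norm{\cdot - \rvx}^2$ is coercive.

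Next comes the existence argument. I would pick $\rvu_0$ with $h_\rvx(\rvu_0) < +\infty$ (possible by properness) and set $\alpha := h_\rvx(\rvu_0)$. The sublevel set $S_\alpha = \{\rvu \in \sR^p : h_\rvx(\rvu) \le \alpha\}$ is non-empty since it contains $\rvu_0$; it is bounded by coercivity and closed by lower semi-continuity, hence compact. Because $h_\rvx(\rvu) > \alpha$ for every $\rvu \notin S_\alpha$, the global infimum coincides with $\inf_{S_\alpha} h_\rvx$. A lower semi-continuous function on a non-empty compact set is bounded below and attains its infimum: taking a minimizing sequence in $S_\alpha$, extracting a convergent subsequence by compactness, and passing to the limit with the l.s.c. inequality yields a limit point $\rvu^* \in S_\alpha$ with $h_\rvx(\rvu^*) = \inf_{\sR^p} h_\rvx$. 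Thus $\rvu^* \in \prox_\varPsi(\rvx)$, proving non-emptiness.

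The argument carries no genuine difficulty, being the textbook existence theorem for minimizers; the only points requiring care are ruling out an infimum equal to $-\infty$, which is handled automatically once the minimization is restricted to the compact sublevel set $S_\alpha$ on which the l.s.c. function is bounded below, and justifying the passage to the limit, where lower semi-continuity, rather than full continuity, is exactly the property needed since $\varPsi$ need not be continuous. I would also remark that in our setting the coercivity assumption is essentially free: under \Cref{hyp:gen} one has $\varPsi \ge 0$, so the quadratic term alone forces $h_\rvx(\rvu) \to +\infty$ as $\norm{\rvu} \to +\infty$.
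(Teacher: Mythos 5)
Your proof is correct, and it is precisely the standard Weierstrass argument: the paper itself offers no proof of this proposition, instead citing it directly as \cite[Thm.~6.4]{beck2017first}, whose proof is the same compactness-of-sublevel-sets argument you give (properness and lower semi-continuity of the sum, coercivity giving a bounded closed sublevel set, then a minimizing sequence and the l.s.c. inequality). Your closing remark that coercivity is automatic in the paper's setting because $\varPsi \geq 0$ under \Cref{hyp:gen} matches exactly the comment the paper makes immediately after the proposition to justify non-emptiness of $\prox_{\eta\varPsi}$ for every stepsize $\eta > 0$.
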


In our setup, since $\varPsi$ is bounded below by $0$ (\Cref{hyp:gen}) and thus $\eta \varPsi + \frac{1}{2}||\cdot -\rvx||^2$ is coercive, $\prox_{\eta \varPsi}$ is non-empty for any stepsize $\eta >0$.
As the next proposition shows, to compute the proximal operator of a sorted penalty, it is enough to be able to compute it on non-negative and non-increasing vectors.

\begin{proposition}
    \label{prop:comput_prox}
    For any $\rvy \in \mathbb{R}^p$, we can recover $ \prox_{\eta \varPsi}(\rvy)$ from  $\prox_{\eta \varPsi}(|\rvy|_\downarrow)$ by:
    \begin{equation*}
        \prox_{\eta \varPsi}(\rvy) = \sign(\rvy) \odot \mathbf P_{|\rvy|}^\top \prox_{\eta \varPsi}(|\rvy|_\downarrow) \, ,
   \end{equation*}
   where $\mathbf P_{|\rvy|}$ denotes any permutation matrix that sorts $|\rvy|$ in non-increasing order:  $\mathbf P_{|\rvy|}(|\rvy|) = |\rvy|_\downarrow$.
\end{proposition}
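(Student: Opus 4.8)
The plan is to exploit two invariances of the sorted penalty $\varPsi$: by \Cref{hyp:gen} it depends only on the sorted magnitudes $|x_{(1)}| \geq \dots \geq |x_{(p)}|$, hence it is unchanged under (i) flipping the signs of the coordinates of its argument and (ii) permuting them. Both operations are realized by orthogonal matrices, and the key observation is that the proximal mapping is equivariant under any orthogonal transformation that leaves $\varPsi$ invariant. Concretely, if $\mathbf Q$ is orthogonal with $\varPsi(\mathbf Q \rvx) = \varPsi(\rvx)$ for all $\rvx$, then substituting $\rvx = \mathbf Q \rvu$ in \Cref{def:prox} and using $\norm{\mathbf Q \rvu - \mathbf Q \rvy} = \norm{\rvu - \rvy}$ gives $\prox_{\eta \varPsi}(\mathbf Q \rvy) = \mathbf Q \, \prox_{\eta \varPsi}(\rvy)$, understood as an equality of sets of minimizers. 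I would establish this equivariance once and then apply it twice.

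First I would perform the sign step. Writing $f(\rvx) = \eta \varPsi(\rvx) + \tfrac12 \norm{\rvx - \rvy}^2$ and using $\varPsi(\rvx) = \varPsi(|\rvx|)$, I fix the magnitudes $\rvz = |\rvx| \geq 0$ and minimize over signs: for each coordinate, $(x_i - y_i)^2$ is minimized at fixed $|x_i|$ by taking $\sign(x_i) = \sign(y_i)$, since this maximizes $x_i y_i$. This shows the data-fit reduces to $\tfrac12 \norm{\rvz - |\rvy|}^2$ and that any minimizer satisfies $x_i = \sign(y_i)\, z_i$ with $\rvz$ a minimizer for input $|\rvy|$; hence $\prox_{\eta \varPsi}(\rvy) = \sign(\rvy) \odot \prox_{\eta \varPsi}(|\rvy|)$. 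The only delicate point is coordinates with $y_i = 0$, where $\sign(y_i) = 0$ discards the coordinate: here I would invoke that $\psi(\cdot;\lambda)$ is non-decreasing with $\psi(0;\lambda) = 0$, so lowering such a $z_i$ to $0$ decreases both the data-fit and the penalty, guaranteeing a minimizer with $z_i = 0$ that is compatible with the formula.

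Second I would apply the permutation equivariance to the non-negative input $|\rvy|$. Since $\mathbf P_{|\rvy|}$ sorts $|\rvy|$, i.e. $\mathbf P_{|\rvy|} |\rvy| = |\rvy|_\downarrow$, and permutation matrices are orthogonal, we have $|\rvy| = \mathbf P_{|\rvy|}^\top |\rvy|_\downarrow$; the equivariance with $\mathbf Q = \mathbf P_{|\rvy|}^\top$ then yields $\prox_{\eta \varPsi}(|\rvy|) = \mathbf P_{|\rvy|}^\top \prox_{\eta \varPsi}(|\rvy|_\downarrow)$. Chaining the two reductions gives the claimed identity. I expect the main obstacle to be the bookkeeping in the degenerate cases — ties in $|\rvy|$ (where $\mathbf P_{|\rvy|}$ is non-unique) and zero coordinates — which must be handled carefully because $\prox_{\eta \varPsi}$ is potentially set-valued in the nonconvex regime; I would argue that the equivariance statements hold as equalities of minimizer sets, so that the stated formula recovers a valid minimizer for every admissible choice of sorting permutation.
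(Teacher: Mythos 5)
Your proposal is correct and takes essentially the same route as the paper: a sign-alignment step reducing the problem to the input $|\rvy|$ (the paper establishes this through a standalone sign proposition proved by flipping a misaligned coordinate of an optimum, which rests on the same inequality $x_i y_i \le |x_i|\,|y_i|$ underlying your partial minimization over signs), followed by the identical permutation step via invariance of $\varPsi$ and a change of variables in the prox objective. Your explicit handling of zero coordinates, ties, and set-valuedness is somewhat more careful than the paper's terse treatment, but the mathematical substance coincides.
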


Denoting $\gK_p^+$ the cone of positive vectors $\rvy \in \mathbb{R}^p$ satisfying $y_1 \geq \dots \geq y_p \geq 0$, we get from \Cref{prop:comput_prox} that the difficulty in computing the prox of a sorted penalty at any point $\rvy \in \sR^p$ is the same as computing this prox for sorted positive vectors $\rvy \in \gK_p^+$ only.
Moreover, it turns out that the latter can be replaced by an equivalent non-negative isotonic\footnote{as in isotonic regression, to be understood as monotonic} constrained problem, as stated by \Cref{prop:prox_cone}.
\begin{proposition}[Proximal operator of non-negative vectors]
    \label{prop:prox_cone}
    Let $\rvy \in \gK_p^+$, i.e.  $y_1 \geq \dots \geq y_p \geq 0$.
    Then:

     \begin{equation}
        \prox_{\eta \varPsi}(\rvy) = \argmin_{\rvx \in \gK_p^+}  P_{\eta \psi}(\rvx; \rvy) \, ,
       \label{eq:prox_way2}
    \end{equation}
    where
    \begin{equation}\label{eq:unsort_prox_obj}
      P_{\eta \psi}(\rvx; \rvy) :=  \sum_{i=1}^p \psi (x_i, \lambda_i) + \frac{1}{2\eta} \Vert \rvy - \rvx \Vert^2 \ .
    \end{equation}
\end{proposition}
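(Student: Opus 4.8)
The plan is to show that the unconstrained minimization defining $\prox_{\eta\varPsi}(\rvy)$ always admits a minimizer lying in the cone $\gK_p^+$, that the two objectives coincide there, and hence that the constrained problem recovers the proximal operator. Throughout I write $F(\rvx) := \eta\varPsi(\rvx) + \tfrac12\norm{\rvx - \rvy}^2$, so that $\prox_{\eta\varPsi}(\rvy) = \argmin_{\rvx\in\sR^p} F(\rvx)$ by \Cref{def:prox}, and I record that under \Cref{hyp:gen} the penalty $\varPsi(\rvx) = \sum_{i} \psi(|x_{(i)}|;\lambda_i)$ depends on $\rvx$ only through the non-increasing rearrangement of its magnitudes; in particular $\varPsi$ is invariant under permutations and sign flips of the coordinates of $\rvx$. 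By \Cref{prop:nonempty-prox} the set $\argmin_{\sR^p} F$ is non-empty, so I may fix a global minimizer $\rvx^\star$.

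First I would perform a \emph{sign reduction}: replacing $\rvx^\star$ by its entrywise absolute value $|\rvx^\star|$ leaves $\varPsi$ unchanged and does not increase the data-fit, since for each $i$, using $y_i\geq 0$,
\[
  (x_i^\star - y_i)^2 - (|x_i^\star| - y_i)^2 = 2 y_i (|x_i^\star| - x_i^\star) \geq 0 .
\]
Hence $|\rvx^\star|$ is again a global minimizer and has non-negative entries. Next I would perform a \emph{rearrangement}: sorting $|\rvx^\star|$ into non-increasing order yields $\tilde\rvx := |\rvx^\star|_\downarrow$, which again leaves $\varPsi$ unchanged by permutation invariance. Expanding $\norm{\rvx-\rvy}^2 = \norm{\rvx}^2 - 2\langle \rvx,\rvy\rangle + \norm{\rvy}^2$, the outer two terms are permutation-invariant, while $\langle \rvx,\rvy\rangle$ is maximized, among all permutations of the entries of $|\rvx^\star|$, when the vector is sorted like $\rvy$; since $\rvy\in\gK_p^+$ is non-increasing, the rearrangement inequality shows $\tilde\rvx$ maximizes the inner product and therefore minimizes the data-fit. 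Thus $\tilde\rvx\in\gK_p^+$ is still a global minimizer of $F$, which proves $\min_{\sR^p} F = \min_{\gK_p^+} F$.

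It then remains to identify the two objectives on the cone. For any $\rvx\in\gK_p^+$ the magnitudes are already sorted, $|x_{(i)}| = x_i$, so $\varPsi(\rvx) = \sum_{i=1}^p \psi(x_i;\lambda_i)$ and consequently
\[
  F(\rvx) = \eta\Big( \sum_{i=1}^p \psi(x_i;\lambda_i) + \tfrac{1}{2\eta}\norm{\rvy-\rvx}^2 \Big) = \eta\, P_{\eta\psi}(\rvx;\rvy) .
\]
Since $\eta>0$, the maps $\rvx\mapsto F(\rvx)$ and $\rvx\mapsto P_{\eta\psi}(\rvx;\rvy)$ have the same minimizers over $\gK_p^+$. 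Combining this with the previous step gives $\argmin_{\gK_p^+} P_{\eta\psi}(\cdot;\rvy)\subseteq \argmin_{\sR^p} F = \prox_{\eta\varPsi}(\rvy)$, and the sign/rearrangement operations above show that every element of the right-hand side can be transported into the cone without changing $F$, which yields \Cref{eq:prox_way2}.

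The main obstacle is the rearrangement step: one must verify that aligning the order of $\rvx$ with that of $\rvy$ does not increase the quadratic data-fit while leaving the sorted penalty untouched — this is exactly the rearrangement inequality and is the heart of why the unconstrained prox reduces to an isotonic-constrained problem. A minor subtlety worth flagging is the case of ties in $\rvy$: then $F$ is invariant under the corresponding coordinate swaps, so the full proximal set is symmetric under this finite group and \Cref{eq:prox_way2} should be read as identifying the sorted representatives selected by the constraint $\rvx\in\gK_p^+$.
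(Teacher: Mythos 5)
Your proof establishes only one of the two inclusions in \Cref{eq:prox_way2}, and the missing one is exactly where the paper does its real work. Your transport argument (flip signs, then sort, using $y_i \geq 0$ and the rearrangement inequality) correctly shows that the sorted vector $\tilde\rvx = |\rvx^\star|_\downarrow$ is again a global minimizer of your objective $F = \eta\varPsi + \tfrac12 \Vert \cdot - \rvy \Vert^2$; this gives $\min_{\sR^p} F = \min_{\gK_p^+} F$ and hence $\argmin_{\gK_p^+} P_{\eta\psi}(\cdot;\rvy) \subseteq \prox_{\eta\varPsi}(\rvy)$. But the proposition asserts a set \emph{equality}, and for the inclusion $\prox_{\eta\varPsi}(\rvy) \subseteq \argmin_{\gK_p^+} P_{\eta\psi}(\cdot;\rvy)$ you must show that \emph{every} unconstrained minimizer already lies in $\gK_p^+$: if some $\rvx^\star \in \prox_{\eta\varPsi}(\rvy)$ had a negative or mis-ordered coordinate, it would belong to the left-hand set but not to the right-hand one (which by definition contains only points of the cone), and the equality would fail. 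Showing that its transported copy $\tilde\rvx$ is also a minimizer says nothing about $\rvx^\star$ itself; ``can be transported into the cone'' is strictly weaker than ``lies in the cone.''

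The paper closes precisely this gap with two exchange lemmas proved by contradiction (sign and monotonicity of the prox, \Cref{prop:sign,prop:mono} in the Supplementary Material): if $x_i > 0$ and $p_i < 0$, flipping the sign \emph{strictly} decreases the objective, and if $x_i < x_j$ while $p_i > p_j$, swapping \emph{strictly} decreases it; either way the point could not have been optimal, so every prox point is non-negative and sorted. Note that your own computations contain the needed strict inequalities --- $2 y_i (|x_i^\star| - x_i^\star) > 0$ when $y_i > 0$ and $x_i^\star < 0$, and the rearrangement inequality is strict for strictly ordered pairs --- so the repair is to run them as a contradiction on an \emph{arbitrary} minimizer rather than as a weak improvement of a transported one. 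Your closing caveat about ties in $\rvy$ is a fair observation (with ties and a nonunique nonconvex prox the literal set equality can indeed fail, a degenerate case the paper's own proof also glosses over, since its monotonicity lemma only constrains strictly ordered pairs), but it does not substitute for the missing inclusion in the generic case of distinct positive entries of $\rvy$.
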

Note that this minimization problem depends solely on the \emph{unsorted penalty} constrained to a \emph{convex} set.
To sum up, if one is able to solve the problem given in \Cref{eq:prox_way2}, then from \Cref{prop:comput_prox}, one can recover the proximal operator of $\eta \varPsi $ at any $\rvy \in \sR^p$.

\subsection{The PAV Algorithm for Isotonic Convex Problems}

The PAV algorithm has been introduced by the authors in \cite{best1990active} to solve the isotonic regression problem, i.e. the orthogonal projection onto the isotonic cone $\gK_p^+$.
The authors in \cite{best2000minimizing} have extended it to the minimization of separable convex functions over $\gK_p^+$.

We give PAV pseudo code in \appref{appendix:pava} and briefly explain below how it operates.
In simple terms, the PAV algorithm starts from the unconstrained optimal point and partitions the solution into blocks of consecutive indices on which the value of the solution is constant (these blocks are singletons at initialization).
It then iterates through the blocks, merging them using \emph{a pooling operation} when the monotonicity constraint is violated:
\begin{itemize}
    \item \textit{Forward step:} PAV merges the current working block with its successor if monotonicity is violated.
    \item \textit{Backward step:} Once this forward pooling is done, the current working block is merged with its predecessors as long as monotonicity is violated.
\end{itemize}

Finally, the additional non-negative condition is dealt with by taking the positive part of the solution at the end of the algorithm (i.e. projecting on the non-negative cone).
PAV is of linear complexity when the pooling operation is done in $\mathcal O(1)$.
The correctness proof of the PAV algorithm~\cite[Theorem 2.5]{best2000minimizing} consists in showing that the KKT conditions are satisfied when the algorithm ends.
The PAV algorithm works for \emph{any separable convex functions} as long as one knows the pooling operation to be done when updating the values.

\section{Proximal Operator of Sorted \\Weakly Convex Penalties}
\label{section:weakly-convex}

In this section, we describe how the PAV algorithm can be used to efficiently compute the proximal operator of many sorted nonconvex  penalties, such as sorted SCAD, sorted MCP or sorted log-sum, for a stepsize small enough, by leveraging weak convexity.

\begin{hypothesis}[Weak convexity]
    \label{assumption:scalar_penalty_weak}
    In addition to \Cref{hyp:gen}, assume that for all $i \in \intervalle{1}{p}$, $\psi(\cdot; \lambda_i)$ is \emph{$\mu$-weakly convex}. In other words, there exists some $\mu > 0$ such that, for any $\alpha \geq \mu$ and any $i \in \intervalle{1}{p}$, $\psi(\cdot;\lambda_i) + \frac{\alpha}{2} \norm{\cdot}^2$ is convex.
\end{hypothesis}

The next proposition shows that computing the proximal operator under \Cref{assumption:scalar_penalty_weak} comes down to solving a convex problem.

\begin{proposition}
    \label{proposition:weak_convex_exist_unique_prox}
    If the sorted penalty $\varPsi$ satisfies \Cref{assumption:scalar_penalty_weak} for some $\mu>0$, then, for $0 < \eta < \frac{1}{\mu}$, the unsorted objective $P_{\eta \psi}$ in \Cref{eq:unsort_prox_obj} is convex and %
        the proximal mapping $\prox_{\eta \varPsi}$ is a singleton on~$\sR^p$.
\end{proposition}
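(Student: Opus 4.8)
The plan is to reduce the computation to the isotonic problem of \Cref{prop:prox_cone} and then exploit an elementary strong-convexity argument for the separable objective $P_{\eta\psi}$. First I would prove the convexity claim. Since $P_{\eta\psi}(\cdot;\rvy)$ is separable, writing $P_{\eta\psi}(\rvx;\rvy) = \sum_{i=1}^p h_i(x_i)$ with $h_i(t) := \psi(t;\lambda_i) + \frac{1}{2\eta}(y_i-t)^2$, it suffices to show that each $h_i$ is convex. Dropping the affine and constant terms coming from the quadratic (which do not affect convexity), it is enough to show that $t \mapsto \psi(t;\lambda_i) + \frac{1}{2\eta}t^2$ is convex. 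I would split $\frac{1}{2\eta}t^2 = \frac{\mu}{2}t^2 + \frac12(\frac1\eta - \mu)t^2$: the map $t \mapsto \psi(t;\lambda_i) + \frac{\mu}{2}t^2$ is convex by $\mu$-weak convexity (\Cref{assumption:scalar_penalty_weak}), while $\frac1\eta - \mu > 0$ whenever $0 < \eta < \frac1\mu$, so the remaining term is strictly convex. Hence each $h_i$, and therefore $P_{\eta\psi}(\cdot;\rvy)$, is convex; in fact a direct check shows that $P_{\eta\psi}(\cdot;\rvy) - \frac12(\frac1\eta - \mu)\norm{\cdot}^2$ is convex, i.e. $P_{\eta\psi}$ is strongly convex with modulus $\frac1\eta - \mu$. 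This proves the first assertion.

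It then remains to prove that $\prox_{\eta\varPsi}$ is a singleton. Non-emptiness is already granted by \Cref{prop:nonempty-prox} (as observed in the remark following it), since $\varPsi$ is continuous, hence l.s.c., and $\eta\varPsi \geq 0$ makes $\eta\varPsi + \frac12\norm{\cdot - \rvx}^2$ coercive. For uniqueness, I would first take $\rvy \in \gK_p^+$: by \Cref{prop:prox_cone}, $\prox_{\eta\varPsi}(\rvy) = \argmin_{\rvx\in\gK_p^+} P_{\eta\psi}(\rvx;\rvy)$, which minimizes a strongly convex function over the closed convex cone $\gK_p^+$ and thus admits a unique minimizer. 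For arbitrary $\rvy \in \sR^p$, \Cref{prop:comput_prox} recovers $\prox_{\eta\varPsi}(\rvy)$ from the now unique point $\prox_{\eta\varPsi}(|\rvy|_\downarrow)$ by applying the sign pattern $\sign(\rvy)$ and the inverse permutation $\mathbf P_{|\rvy|}^\top$, which sends a single point to a single point; hence $\prox_{\eta\varPsi}(\rvy)$ is a singleton on all of $\sR^p$.

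The convexity step is routine. The one point deserving care, and the main (if minor) obstacle, is the last transfer, because $\mathbf P_{|\rvy|}$ and $\sign(\rvy)$ are not uniquely determined when $|\rvy|$ has repeated or vanishing entries. I would argue that the recovered vector is independent of these choices: this is exactly what \Cref{prop:comput_prox} guarantees, its recovery formula holding for any admissible sorting permutation, because tied coordinates of $|\rvy|_\downarrow$ together with the isotonic constraint render the corresponding coordinates of the cone minimizer interchangeable. With this, uniqueness on $\gK_p^+$ propagates to uniqueness on $\sR^p$, completing the proof.
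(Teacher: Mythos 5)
Your proof is correct and follows essentially the same route as the paper's: non-emptiness via \Cref{prop:nonempty-prox}, strong convexity of $P_{\eta\psi}$ under \Cref{assumption:scalar_penalty_weak} with $\eta < 1/\mu$, and transfer of uniqueness from the cone $\gK_p^+$ to all of $\sR^p$ through \Cref{prop:comput_prox,prop:prox_cone}. You simply spell out details the paper leaves implicit (the decomposition $\tfrac{1}{2\eta}t^2 = \tfrac{\mu}{2}t^2 + \tfrac12(\tfrac1\eta-\mu)t^2$ and the invariance under the choice of sorting permutation), which is fine.
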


\begin{proof}
    The proximal mapping is non-empty for any $\rvx \in \sR^p, \eta > 0$.
     Then $P_{\eta \psi}$ is strongly convex under  \Cref{assumption:scalar_penalty_weak} and $\eta < \frac{1}{\mu}$.
    From \Cref{prop:comput_prox,prop:prox_cone}, as the set of minimizers of a strongly convex function, $\prox_{\eta \varPsi}(\rvy)$ is a singleton for all $\rvy \in \sR^p$.
\end{proof}

\begin{remark}
    \Cref{proposition:weak_convex_exist_unique_prox} requires that the stepsize $\eta$ is smaller than a constant depending on the weak-convexity parameter of the penalty. Although it may seem restrictive, note that it is consistent with the condition for convergence of the PGD algorithm (\Cref{equation:pgd_iterate}) that imposes $\eta < \frac 1 L$ with a $L$-smooth datafit $f$.
\end{remark}

\subsection{Prox Computation with the PAV Algorithm}

With $\eta < 1/\mu$ the prox problem becomes a convex isotonic problem so the PAV algorithm can be used.

The pooling operation which updates the value on a block $B$, denoted $\chi(B)$, is defined as:
\begin{equation}
    \label{eq:chi_b}
    \chi(B) := \argmin_{z \in \sR_+} \sum_{i \in B} \frac{1}{2\eta} (z-y_i)^2 + \psi(z;\lambda_i),
\end{equation}
where we recall that here $\rvy \in \gK_p^+$.
\Cref{proposition:chi_prox_ope} shows that, for sorted weakly convex penalties, the pooling operation given by \Cref{eq:chi_b} reduces to a \emph{scalar proximal operation}.

\begin{proposition}[Writing $\chi$ as a proximal operation]
    \label{proposition:chi_prox_ope}
    Under \Cref{assumption:scalar_penalty_weak},
    for a block of consecutive indices $B$, the pooling operation $\chi(B)$ is the proximal point of an averaged scalar penalty evaluated at an averaged data point.
    \begin{equation}
        \chi(B) = \prox_{\frac{\eta}{\card{B}} \sum_{i\in B} \psi(\cdot; \lambda_i)} (\bar y_B) \ ,
    \end{equation}
    where we use the notation $\bar y_B$ as the average value of the vector $\rvy$ on the block $B$: $\bar y_B = \frac{1}{\vert B \vert}\sum_{i \in B} y_i$.
    Besides, if $\psi(\cdot; \lambda_i) = \lambda_i \psi_0(\cdot)$, then we have %
    \begin{equation}
        \chi(B) = \prox_{\bar \lambda_B \eta \psi_0}(\bar y_B) \ .
    \end{equation}
\end{proposition}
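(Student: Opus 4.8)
The plan is to reduce the pooling objective to a single scalar proximal objective by completing the square on its quadratic part, exploiting that the data points $y_i$ with $i \in B$ enter the objective only through their sum. Concretely, I would first expand
\[
  \sum_{i \in B} \frac{1}{2\eta}(z-y_i)^2
  = \frac{\card{B}}{2\eta} z^2 - \frac{1}{\eta}\Bigl(\sum_{i\in B} y_i\Bigr) z + \frac{1}{2\eta}\sum_{i\in B} y_i^2 ,
\]
and then use $\sum_{i\in B} y_i = \card{B}\,\bar y_B$ to recognize this as
\[
  \sum_{i \in B} \frac{1}{2\eta}(z-y_i)^2
  = \frac{\card{B}}{2\eta}\,(z - \bar y_B)^2 + C_B ,
  \qquad
  C_B := \frac{1}{2\eta}\sum_{i\in B}(y_i - \bar y_B)^2 .
\]
This is just the parallel-axis (bias–variance) identity, and the key point is that $C_B$ does not depend on $z$.

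Substituting this into the definition of $\chi(B)$ in \Cref{eq:chi_b} and discarding the additive constant $C_B$ (which does not affect the $\argmin$), the pooling problem becomes
\[
  \chi(B) = \argmin_{z\in\sR_+} \Bigl\{ \sum_{i\in B}\psi(z;\lambda_i) + \frac{\card{B}}{2\eta}\,(z-\bar y_B)^2 \Bigr\}.
\]
Multiplying the bracketed objective by the positive constant $\eta/\card{B}$ leaves the minimizer unchanged and yields exactly $\frac{\eta}{\card{B}}\sum_{i\in B}\psi(z;\lambda_i) + \frac12 (z-\bar y_B)^2$, which is the objective defining the (scalar) proximal mapping of $\frac{\eta}{\card{B}}\sum_{i\in B}\psi(\cdot;\lambda_i)$ at $\bar y_B$, as in \Cref{def:prox}. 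This gives the first identity. To be allowed to write an equality rather than a set inclusion, I would note that under \Cref{assumption:scalar_penalty_weak} and $\eta<1/\mu$ each summand $\frac{1}{2\eta}(z-y_i)^2+\psi(z;\lambda_i)$ is $(\tfrac1\eta-\mu)$-strongly convex, so the pooling objective is strongly convex and its minimizer over $\sR_+$ is unique; the same argument shows the averaged penalty $\frac{\eta}{\card{B}}\sum_{i\in B}\psi(\cdot;\lambda_i)$ has a single-valued prox.

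For the second statement, I would specialize to $\psi(\cdot;\lambda_i)=\lambda_i\psi_0(\cdot)$, so that $\sum_{i\in B}\psi(\cdot;\lambda_i)=\bigl(\sum_{i\in B}\lambda_i\bigr)\psi_0(\cdot)=\card{B}\,\bar\lambda_B\,\psi_0(\cdot)$ with $\bar\lambda_B=\frac{1}{\card{B}}\sum_{i\in B}\lambda_i$. The factor $\card{B}$ then cancels the $1/\card{B}$ in front of the sum, giving $\frac{\eta}{\card{B}}\sum_{i\in B}\psi(\cdot;\lambda_i)=\eta\,\bar\lambda_B\,\psi_0$, hence $\chi(B)=\prox_{\bar\lambda_B\eta\psi_0}(\bar y_B)$.

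The mathematical content is elementary — completing the square plus a rescaling — so I do not expect a genuine obstacle. The only point requiring care is the interaction with the non-negativity constraint $z\in\sR_+$: the scalar prox here must be understood as a minimization over $\sR_+$ (matching the domain of $\psi$), which is harmless since $\bar y_B\ge 0$ for $\rvy\in\gK_p^+$ and $\psi$ is non-decreasing. The remaining care is simply tracking the positive rescaling constants so that the $\eta$, $\card{B}$, and $\bar\lambda_B$ factors land in the claimed places, together with the strong-convexity remark ensuring both sides are singletons.
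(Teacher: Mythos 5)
Your proof is correct and takes essentially the same route as the paper: the paper performs the same reduction at the level of first-order conditions, rewriting the derivative of the pooling objective as $\card{B}$ times the derivative of the averaged prox objective $z \mapsto \frac{1}{2\eta}(z-\bar y_B)^2 + \frac{1}{\card{B}}\sum_{i\in B}\psi(z;\lambda_i)$ and invoking strong convexity for uniqueness, which is exactly the differentiated form of your completed-square identity. If anything, your objective-level argument (argmin invariance under positive rescaling and additive constants) is marginally more robust, since it does not require the minimizer to be an interior zero of the derivative and thus handles the boundary case $\chi(B)=0$ without further comment, whereas the paper's phrase ``the unique zero of $f_B$'' implicitly assumes an interior solution.
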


\begin{proof}
    Let $\rvy \in \gK_p^+$ be the point for which we want to compute $\prox_{\eta\Psi}(\rvy)$.
    Let $B = \intervalle{q}{r}$ be a block of consecutive indices.
    Then, $\chi(B)$ is the unique zero of the function $f_{B}$ defined as
    \begin{align*}
        f_{B} (z) &= \card{B} \left[ \frac{1}{\eta} \left( z - \bar y_B \right) + \frac{\sum_{i \in B} \psi'(z;\lambda_i)}{\card{B}}\right] \\
        & = \card{B} \frac{\partial}{\partial z}  \left[ \frac{1}{2\eta} \left( z - \bar y_B \right)^2 + \frac{\sum_{i \in B} \psi(z;\lambda_i)}{\card{B}}\right] \ .
    \end{align*}
    So, $\chi(B)$ is the unique minimizer of the strongly convex function $z \mapsto \frac{1}{2\eta} \left( z - \bar y_B \right)^2 + \frac{\sum_{i \in B} \psi(z;\lambda_i)}{\card{B}}$, which gives the expected result.
\end{proof}

\begin{remark}
    For a standard non-sorted penalty, one would apply the proximal operator component-wise.
    \Cref{proposition:chi_prox_ope} highlights that the use of sorted penalties enforces the proximal operation to be done block-wise, which enforces coefficient grouping, as intended.
\end{remark}

\begin{example}[Sorted MCP]\label{ex:smcp}
    For any $\gamma >0$, $\lambda >0$, the MCP penalty is defined on $\sR_+ \times \sR_+$ as:
    \begin{equation*}
        \psi : (z;\lambda) \mapsto
        \begin{cases}
            \lambda z - \frac{z^2}{2\gamma} & \text{if } z \leq \gamma \lambda \ ,\\
            \frac{\lambda^2\gamma}{2} & \text{otherwise} \ .
        \end{cases}
    \end{equation*}
    It is $\frac{1}{\gamma}$-weakly convex, so is $\varPsi$.
    From~\Cref{proposition:chi_prox_ope}, the solution $\chi(B)$ of \Cref{eq:chi_b} is the zero of the continuous, strictly increasing affine function $f_B$:
    \begin{equation*}
        f_B: z \mapsto  \frac{1}{\eta} \card{B} \left( z - \bar y_B \right) + \sum_{i\in B} \left(\lambda_i - \tfrac{z}{\gamma}\right)_+ \ .
    \end{equation*}
    By denoting for every $i \in B=\intervalle{q}{r}$, $v_i := \left[ \card{B}\gamma - \eta (i-q+1)\right] \lambda_i + \eta \sum_{j=q}^i \lambda_j$, we have an explicit expression for $\chi(B)$ that only depends on the position of $\bar y_B$ in the sequence $(v_i)_{i \in B}$:
    \begin{equation*}
        \chi(B) = \frac{\bar y_B  - \frac{\eta}{\card{B}} \sum_{j=q}^i \lambda_i}{1 - \frac{i-q+1}{\card{B}} \frac{\eta}{\gamma}} \, ,
    \end{equation*}
    with $i$ such that $\card{B}  v_{i+1} \leq \bar y_B <  \card{B}  v_i $ (we denote $v_{q-1}= + \infty$, $v_{r+1} = 0$ and $\sum_{j=q}^{q-1} = 0$).
\end{example}

\begin{example}[Sorted Log-Sum]\label{ex:sort_log_sum}
    For any $\epsilon >0$, $\lambda >0$, the log-sum penalty is defined on $\sR_+ \times \sR_+$ as $$\psi : (z;\lambda) \mapsto \lambda \log \left(1 +\frac{z}{\epsilon} \right).$$
    It is $\frac{\lambda}{\epsilon^2}$-weakly convex.
    From~\Cref{proposition:chi_prox_ope} and
     the known formula of the scalar proximal operator of log-sum \cite{prater2022proximity}, we can derive the following formula:
    \begin{equation*}
        \chi(B) =
        \begin{cases}
            0 & \text{if } \bar y_B \leq \frac{\eta \bar \lambda_B}{\epsilon} \ , \\
            \frac 1 2 (\bar y_B -\epsilon) + \sqrt{\frac 1 4 (\bar y_B + \epsilon)^2 - \eta \bar \lambda_B} & \text{otherwise} \ .
        \end{cases}
    \end{equation*}
    hence, in the weakly convex case the prox can be computed as easily as for the sorted $\ell_1$ norm.
\end{example}

\subsection{Links with the Majorization-Minimization Approach}\label{sub:mm}
The related work \cite{feng2019sorted} proposed a majorization-minimization (MM) framework in order to use  sorted nonconvex penalties; which, contrary to ours, avoids computing the prox directly.
We consider here the example of sorted MCP, denoted $\varPsi_{\mathrm{SMCP}}$, for which a detailed algorithm is given in Algorithms 2 to~4 of \cite{feng2019sorted}
It relies on the decomposition of the penalty as a difference of two convex functions. While there is no single such decomposition, given the piecewise quadratic nature of MCP, a natural one is given by
\begin{multline}
	\varPsi_{\mathrm{SMCP}}(\rvx, \gamma, (\lambda_i)_i) = \\ \underbrace{\varPsi_{\mathrm{SMCP}}(\rvx, \gamma, (\lambda_i)_i)+  \frac{1}{2\gamma}\|\rvx\|^2}_{\varPsi^+(\cdot, \gamma, (\lambda_i)_i) \text{ (convex sorted penalty)}} -  \underbrace{\frac{1}{2\gamma}\|\rvx\|^2}_{\varPsi^-}.
\end{multline}
This allows the authors of \cite{feng2019sorted} to easily deploy a MM  approach through the majorization of the concave term ($-\varPsi^-$)  by its tangent at the current point. More precisely, the MM iterations are given by
\begin{equation}
    \label{eqn:mm_iter}
	\rvx^{k+1} = \mathrm{arg} \min_{\rvx} \; g(\rvx)  - \rvx^\top \nabla \varPsi^-(\rvx^k) + \varPsi^+(\rvx,\gamma,(\lambda_i)_i)
\end{equation}
where $g$ denotes the datafit, assumed to be differentiable. The minimization of this surrogate function is then tackled with a proximal gradient method.
This requires the computation of  $\prox_{\varPsi^+}$ which can be done using a PAV algorithm.
Differently, our approach leverages the weak convexity of the sorted MCP penalty to use proximal algorithms (such as ISTA, FISTA) directly on the original problem.
We experimentally highlight the difference between these two methods on two problems: penalized least square regression and penalized logistic regression.
The experimental setup is detailed in \appref{appendix:mm}.

\begin{figure}
    \centering
    \includegraphics[width=0.49\linewidth]{./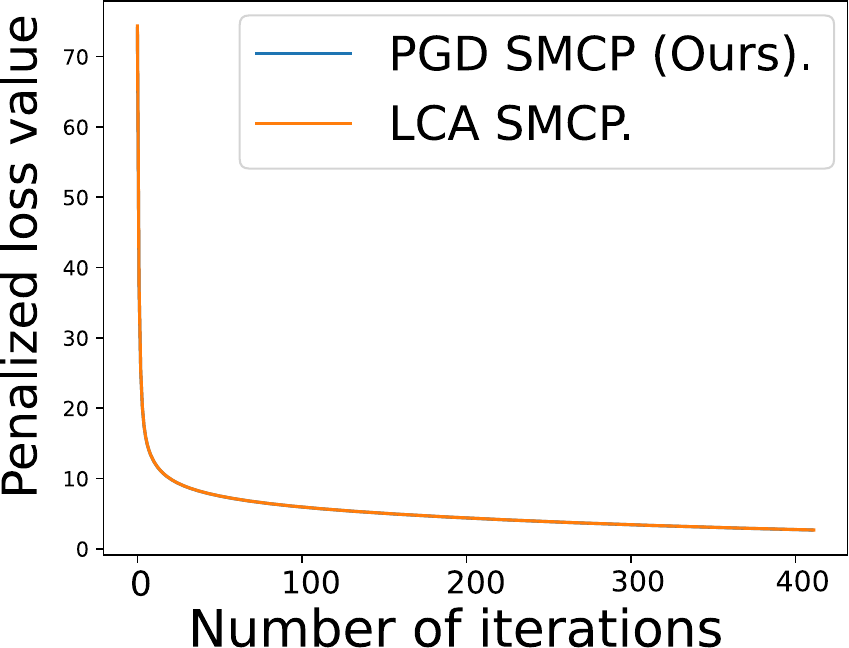}
    \includegraphics[width=0.49\linewidth]{./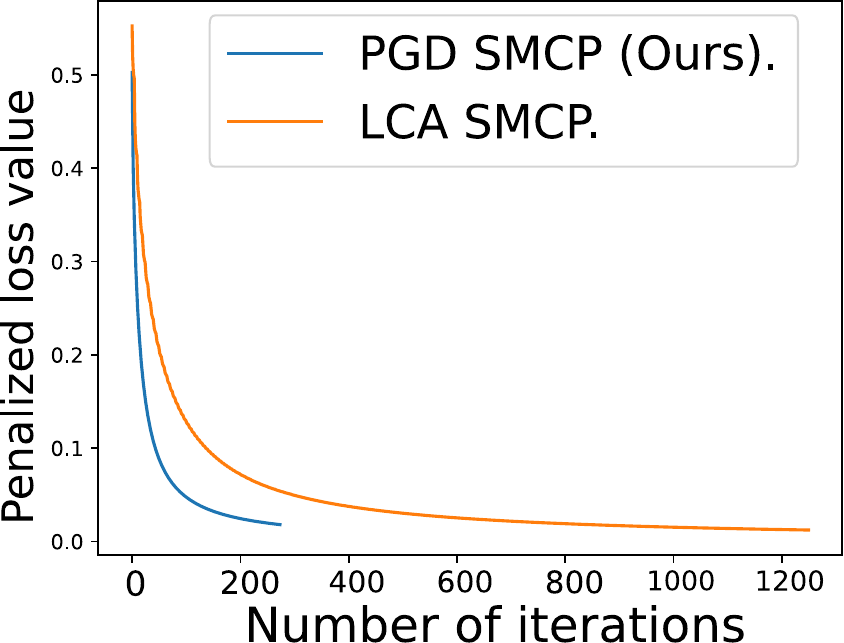}
    \caption{Comparison of the MM approach of \cite{feng2019sorted}  (LCA SMCP) with the proximal gradient method using our proposed computation of the prox on least squares (left) and logistic regression (right) problems.
    }
    \label{fig:mm}
\end{figure}

\Cref{fig:mm} displays the evolution of the penalized loss value which we aim to minimize. We can make two observations:
\begin{enumerate}
    \item For the least square regression problem, the MM approach from \cite{feng2019sorted} (denoted as LCA SMCP for Linear Convex Approximation Sorted MCP) behaves the same as our method.
    The convex approximation relies on a quadratic rectification added to the datafit which does not change the behaviour of the loss for the least square case.
    \item For the logistic regression problem, the MM approach from \cite{feng2019sorted} has slower convergence rate compared to our direct approach.
\end{enumerate}

\section{Proximal Operator of Sorted \\ Nonconvex Penalties}
\label{section:nonconvex}

Contrary to the examples of \Cref{section:weakly-convex}, in the most general case, the prox objective function may not be strongly convex, and we need a more refined analysis to devise prox computation strategies.
This section addresses a broader class of sorted penalties  defined by the following hypothesis.

\begin{hypothesis}
    \label{hyp:nonconvex}
    In addition to \Cref{hyp:gen}, assume that the scalar penalty function is such that $\psi(\cdot ; \lambda) = \lambda \psi_0(\cdot)$ and twice differentiable on $\sR_{+*}$, except potentially at a finite set of points, denoted $Z_\psi$.
    The second derivative $\psi_0''$ is increasing and such that $\lim_{z\to + \infty} \psi_0''(z) = 0$.
\end{hypothesis}

This class of penalties contains some of the weakly convex penalties studied in the previous section such as log-sum but does not restrict them to the weakly convex regime. It also includes many other nonconvex penalties~\cite{antoniadis2011penalized} such that, for instance,
the $\ell_q$ penalty for $q \in ]0,1[$ defined as $\psi_0(x) = x^q$, which is not weakly convex.
In the unsorted case, several experimental studies \cite{li2018nonconvex,yuan2019analysis} show that $\ell_q$ outperforms $\ell_1$ or SCAD penalties for specific inverse problems: its recurrent appearance in the  literature makes it worthy of interest.

\subsection{The Scalar Case}

For $y \geq 0$ and $\lambda > 0$, we consider throughout this section the scalar proximal objective function of $\lambda \psi_0$,  denoted $F$
\begin{equation}
  F : z \in \sR_+ \mapsto  \frac{1}{2} (z-y)^2 +  \lambda \psi_0(z),
\end{equation}
such that $\prox_{\lambda \psi_0}(y) = \argmin_{z \in \sR_+} F(z)$.\footnote{Note that, under \Cref{hyp:nonconvex}, $\eta \psi(\cdot;\lambda) = \eta \lambda \psi_0$. In the remaining of this section, we consider wlog $\eta=1$. One can simply replace every occurrence of $\lambda$ by $\eta \lambda$ to recover the dependence on $\eta$.}
From \Cref{hyp:nonconvex}, we get that $F$ is continuous, non-negative and coercive (at $z \to \infty$) which guarantees the existence of at least one minimizer in $\sR_+$.
The next proposition details the variations of the function $F$.

\begin{proposition}[Variations of scalar nonconvex prox objective]
    \label{prop:variation_scalar}
    For fixed $\lambda >0$ and $y>0$, the prox objective $F$ has the following properties:
\begin{enumerate}[label=(\roman*)]
    \item \label{prop:variations_item_1} \textbf{Concavity-convexity transition:} There exists $m(\lambda) \geq 0$ such that the function $F$ is strictly concave on $[0,m(\lambda)]$ and convex on $[m(\lambda), +\infty)$.
    \item \label{prop:variations_item_2} \textbf{Local minimizers transition:} $F$ admits at most two local minimizers on $\sR_+$.
    There exists $\tau(\lambda)$ such that
    \begin{itemize}
    \item if $y < \tau(\lambda)$, $0$ is the unique local minimizer (hence global),
    \item if $ \tau(\lambda)\leq y \leq   \lambda \psi'_0(0)$, there exist two local minimizers: $0$ and another one, denoted $\rho^+(y; \lambda)$,
    \item if $y \geq   \lambda \psi_0'(0)$, there is a unique minimizer, still denoted  $\rho^+(y; \lambda)$. %
    \end{itemize}
    Moreover, the threshold $\tau(\lambda)$ writes
    \begin{equation}
        \tau(\lambda) = m(\lambda) +  \lambda \psi_0'(m(\lambda)),
    \end{equation}
    and the nonzero minimizer admits the  lower bound
    \begin{equation}
        \rho^+(y; \lambda) \geq m(\lambda) > 0
    \end{equation}
    Finally,  $\rho^+$ is continuous with the following variations: it is non-decreasing with $y$ and non-increasing with~$\lambda$.
    \item \label{prop:variations_item_3} \textbf{Global minimizers transition:} There exists $T(\lambda) \in [ \tau(\lambda),  \lambda \psi_0'(0)]$ such that
    \begin{itemize}
        \item if $y<T(\lambda)$, $0$ is the global minimizer,
        \item if $y=T(\lambda)$, $0$ and $\rho^+(y;\lambda)$ are global minimizers.
        \item if $y>T(\lambda)$, $\rho^+(y;\lambda)$ is the global minimizer,
    \end{itemize}
\end{enumerate}
These results are illustrated on \Cref{fig:shapes_F}.
\end{proposition}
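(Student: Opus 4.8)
The plan is to reduce all three items to a study of the first derivative, which I write as $F'(z) = (z-y) + \lambda\psi_0'(z) = h(z) - y$ with $h(z) := z + \lambda\psi_0'(z)$. The central observation is that, since $\psi_0''$ is increasing by \Cref{hyp:nonconvex}, the derivative $\psi_0'$ is convex on $\sR_{+*}$, so $h$ is convex on $\sR_+$ (sum of the identity and a convex function) and $F'' = h' = 1 + \lambda\psi_0''$ wherever defined. This convexity of $h$ is what makes the analysis tractable and, at the same time, absorbs the finitely many non-differentiability points $Z_\psi$, on which I never need a second derivative. Item (i) then follows immediately: $F''$ is increasing with limit $1>0$ at $+\infty$ (from $\psi_0''(z)\to 0$); if $F''(0^+)\ge 0$ then $F$ is convex and I set $m(\lambda)=0$, otherwise $F''$ changes sign exactly once at the unique $m(\lambda)>0$ with $1+\lambda\psi_0''(m(\lambda))=0$, giving strict concavity on $[0,m(\lambda)]$ and convexity beyond. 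Equivalently, $h$ decreases on $[0,m(\lambda)]$ and increases on $[m(\lambda),+\infty)$ with minimum $h(m(\lambda)) = m(\lambda)+\lambda\psi_0'(m(\lambda))$, which I name $\tau(\lambda)$, recovering the stated formula.

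For item (ii) I read the critical points of $F$, i.e. the solutions of $h(z)=y$, off the unimodal graph of $h$, which drops from $h(0)=\lambda\psi_0'(0)$ to $\tau(\lambda)$ on $[0,m(\lambda)]$ and climbs back to $+\infty$. Thus: if $y<\tau(\lambda)$ there is no critical point and $F'=h-y>0$, so $0$ is the unique minimizer; if $\tau(\lambda)\le y\le \lambda\psi_0'(0)$ there is a local maximum $a\in[0,m(\lambda)]$ on the concave branch and a local minimum $b\ge m(\lambda)$ on the convex branch, while $F'(0^+)=\lambda\psi_0'(0)-y\ge 0$ keeps $0$ a local minimum, yielding exactly the two local minimizers $0$ and $\rho^+(y;\lambda):=b$; if $y>\lambda\psi_0'(0)$ then $F'(0^+)<0$, so $0$ is no longer a minimizer and the single critical point $b=\rho^+(y;\lambda)$ is the unique minimizer. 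In every case $\rho^+$ is the inverse of the strictly increasing map $h|_{[m(\lambda),+\infty)}$ evaluated at $y$, which gives $\rho^+\ge m(\lambda)$ (hence the lower bound, with $m(\lambda)>0$ in the genuinely nonconvex regime) and continuity of $\rho^+$. For monotonicity I differentiate $\rho^+ + \lambda\psi_0'(\rho^+)=y$ implicitly, obtaining $\partial_y\rho^+ = 1/F''(\rho^+)>0$ and $\partial_\lambda\rho^+ = -\psi_0'(\rho^+)/F''(\rho^+)\le 0$, using $F''(\rho^+)>0$ on the convex branch and $\psi_0'\ge 0$.

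For item (iii), on $[\tau(\lambda),\lambda\psi_0'(0)]$ where both minimizers coexist, I compare their values via $\Delta(y):=F(\rho^+(y);y)-F(0;y)$ with $F(0;y)=\tfrac12 y^2$. The key step is an envelope-type computation: since $\partial_z F(\rho^+(y);y)=0$, differentiating gives $\tfrac{d}{dy}F(\rho^+(y);y)=\partial_y F(z;y)\big|_{z=\rho^+}=y-\rho^+(y)$, whence $\Delta'(y) = (y-\rho^+(y))-y = -\rho^+(y)<0$, so $\Delta$ is strictly decreasing. At the endpoints, $\Delta(\tau(\lambda))>0$ because at $y=\tau(\lambda)$ one has $F'\ge 0$ so $F(0)<F(m(\lambda))=F(\rho^+)$ and $0$ is global, while for $y>\lambda\psi_0'(0)$ one has $\Delta(y)<0$ because $F$ decreases at $0$ and $\rho^+$ is the unique minimizer; by continuity $\Delta(\lambda\psi_0'(0))\le 0$. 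Strict monotonicity plus the intermediate value theorem then provide a unique $T(\lambda)\in[\tau(\lambda),\lambda\psi_0'(0)]$ with $\Delta(T(\lambda))=0$, producing the three global-minimizer regimes.

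The main obstacle I anticipate is the careful handling of boundary and degenerate cases rather than any deep difficulty: the case $\psi_0'(0)=+\infty$ (e.g. $\ell_q$), for which $h(0)=+\infty$, regime (c) is vacuous, and $0$ stays a local minimizer for all $y\ge\tau(\lambda)$; the collapse of $[\tau(\lambda),\lambda\psi_0'(0)]$ to a point when $m(\lambda)=0$; and ensuring the \emph{strict} concavity in (i) is not weakened to mere concavity by flat portions of $\psi_0''$. I would settle the last point by using that $\psi_0''$ is strictly increasing for the penalties of interest, so $h$ is strictly convex and $h|_{[m(\lambda),+\infty)}$ is a genuine bijection, making $\rho^+$ single-valued and the envelope argument of (iii) legitimate throughout.
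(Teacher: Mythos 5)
Your items (i) and (ii) follow essentially the same route as the paper: you define $m(\lambda)$ through the sign change of $F''=1+\lambda\psi_0''$ (the paper writes it as $\inf\{z\notin Z_\psi:\psi_0''(z)\geq -1/\lambda\}$, which is slightly more robust than asking for an exact root of $1+\lambda\psi_0''$, but the content is identical), you obtain $\tau(\lambda)=m(\lambda)+\lambda\psi_0'(m(\lambda))$ as the minimum of $h(z)=z+\lambda\psi_0'(z)$, and you get continuity of $\rho^+$ by inverting $h$ on its strictly increasing branch, exactly as in the paper. The one local divergence in (ii) is monotonicity of $\rho^+$: you use implicit differentiation, $\partial_y\rho^+=1/F''(\rho^+)$ and $\partial_\lambda\rho^+=-\psi_0'(\rho^+)/F''(\rho^+)$, which needs $F''(\rho^+)>0$ and differentiability of $\psi_0'$ at $\rho^+$; this degenerates when $\rho^+=m(\lambda)$ (i.e.\ $y=\tau(\lambda)$) or when $\rho^+\in Z_\psi$, so you must add a word about finitely many exceptional points plus continuity. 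The paper's contradiction argument, which only uses strict monotonicity of $z\mapsto z-y+\lambda\psi_0'(z)$ on $[m(\lambda),+\infty)$, avoids second derivatives entirely. Item (iii) is where you take a genuinely different and interesting route: the paper exploits monotonicity of the scalar prox (so the set of $y$ where $0$ is global and the set where $\rho^+$ is global are complementary half-lines) and then exhibits a finite $y_+$ where $0$ loses, via the at-most-linear growth of the concave $\psi_0$, i.e.\ $\tfrac12 y^2-\lambda\psi_0(y)\to+\infty$. Your envelope computation $\Delta'(y)=-\rho^+(y)<0$ buys more: strict monotonicity of the gap, hence uniqueness of $T(\lambda)$ and a clean treatment of the tie case $y=T(\lambda)$, neither of which the paper's argument gives directly. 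However, as written your crossing argument has a hole precisely in the most relevant case $\psi_0'(0)=+\infty$ (sorted $\ell_q$): the endpoint claim ``$\Delta(y)<0$ for $y>\lambda\psi_0'(0)$'' is then vacuous, so nothing yet forces $\Delta$ to become negative. The fix is immediate from your own computation — $\Delta'(y)=-\rho^+(y)\leq -m(\lambda)<0$ gives $\Delta(y)\to-\infty$ as $y\to+\infty$ whenever $m(\lambda)>0$, or one can fall back on the paper's sublinear-growth bound, which works uniformly — but you should state it, since your closing paragraph flags the $\psi_0'(0)=+\infty$ case only for item (ii), not for the existence of the crossing in item (iii).
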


This proposition, valid for any penalty satisfying \Cref{hyp:nonconvex}, generalizes some peculiar results that can be found in the literature, namely for log-sum or $\ell_q$, that we recall below.
The proof is in Appendix~\ref{proof:min_local_scalar}.
\begin{example}[Log-sum]
        For the log-sum penalty, defined in \Cref{ex:sort_log_sum}, \cite[Lemma 2, Proposition 2]{prater2022proximity} gives the existence of the two thresholds $\tau(\lambda)$ and $T(\lambda)$:
        \begin{enumerate}
            \item $\tau(\lambda) = \min(2 \sqrt{\lambda} - \epsilon,\lambda/\epsilon)$ decides whether there is one minimizer which is $0$ or two local minimizers, $0$ and $\rho^+$ on $\sR_+$.
            \item $T(\lambda) \geq \tau(\lambda)$, defined implicitely but such that $T(\lambda) \leq {\lambda }/{\epsilon}$, decides whether $0$ or $\rho^+$ is the global minimizer.
        \end{enumerate}
        The local minimizer $\rho^+$ has an explicit formula
        \begin{equation}
            \rho^+(y, \lambda) = \frac{1}{2}(y-\epsilon) + \sqrt{\frac{1}{4} (y + \epsilon)^2 - \lambda } \ .
        \end{equation}
       Moreover, \cite[Lemma 3]{prater2022proximity} provides an explicit lower bound on $\rho^+$, which is non-decreasing with $\lambda$: for $y \geq \tau(\lambda)$, then $\rho^+(y;\lambda) \geq \max(\sqrt{\lambda } - \epsilon,0) = m(\lambda)$.
\end{example}

\begin{example}[$\ell_q$]
    For the $\ell_q$ penalty, defined as $\psi(x;\lambda) = \lambda x^q$ for $q \in (0,1)$,  the two thresholds $\tau(\lambda)$ and $T(\lambda)$ are given by
    \begin{align*}
          \tau(\lambda) &:= \left[ \frac{2-q}{1-q} \right] \left(  \lambda  q(1-q) \right)^{\frac{1}{2-q}} \\
          T(\lambda) & := \left[\frac{1}{2} \frac{2-q}{1-q} \right] \left( 2  \lambda  (1-q) \right)^{\frac{1}{2-q}}
    \end{align*}
    Proofs can be found in \cite{xu2012l_,chen2016computing}.
    \Cref{fig:shapes_F} illustrates the local minimizers transition cases.
    Moreover, among the $\ell_q$ penalties, $\ell_{1/2}$ and $\ell_{2/3}$ stand out as two penalties whose (scalar) proximal operator can be computed in closed form (i.e., for which we have an expression of $\rho^+$) \cite{xu2012l_,chen2016computing}.
\end{example}

\begin{figure}[t]
    \centering
    \newcommand{\mysize}{0.24}
    \begin{subfigure}[t]{\mysize\linewidth}
        \includegraphics[width=\linewidth]{./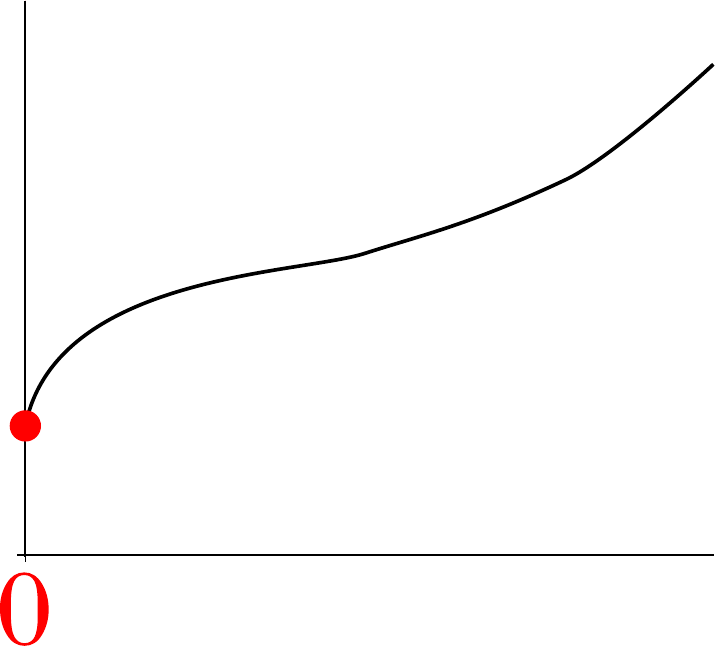}
    \subcaption{\small{$ y <  \tau(\lambda)$}}
    \label{fig:no_chi_0_min}
    \end{subfigure}
    \begin{subfigure}[t]{\mysize\linewidth}
        \includegraphics[width=\linewidth]{./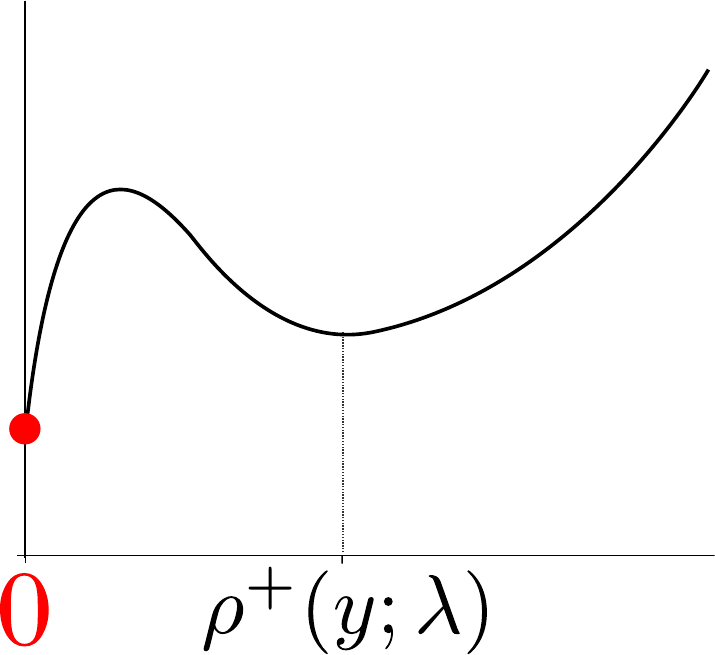}
    \subcaption{\small{$\tau(\lambda) \leq y \leq T(\lambda)$}}
    \label{fig:chi_0_min}
    \end{subfigure}
    \begin{subfigure}[t]{\mysize\linewidth}
        \includegraphics[width=\linewidth]{./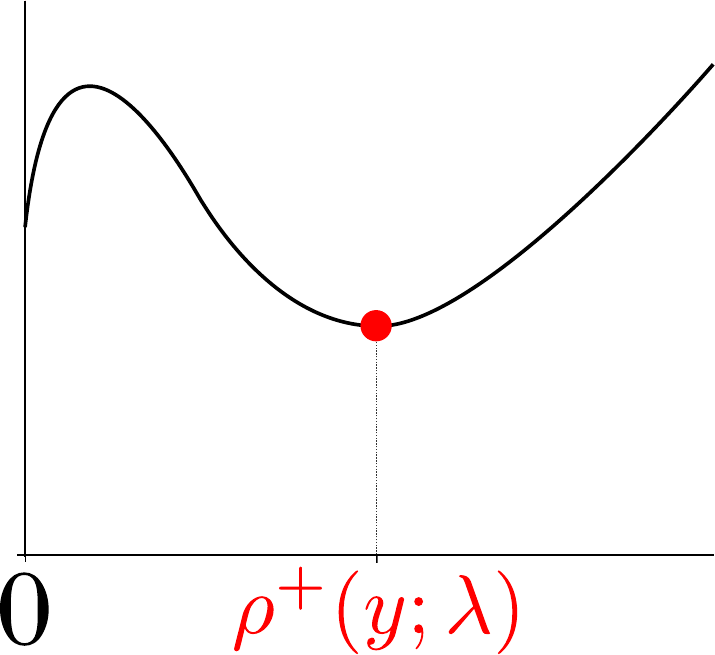}
    \subcaption{\small{$ T(\lambda) < y < \lambda \psi'_0(0)$}}
    \label{fig:no_chi_min}
    \end{subfigure}
    \begin{subfigure}[t]{\mysize\linewidth}
        \includegraphics[width=\linewidth]{./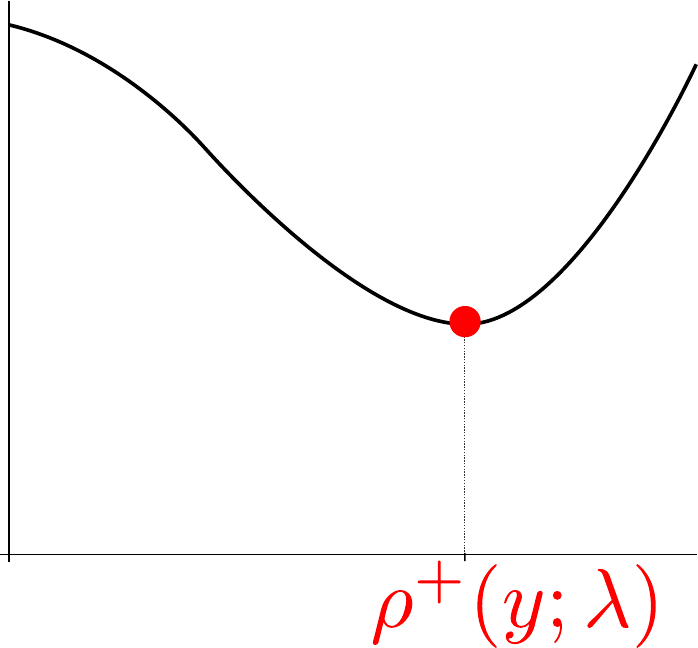}
    \subcaption{\small{$  \lambda \psi'_0(0) < y$}}
    \end{subfigure}

    \caption{The $4$ (or $3$ when $\psi_0'(0) = +\infty$, e.g., $\ell_q$)
     possible variation profiles of $F$, with its local minimizers, depending on the value of $y$. The red dot denotes the global minimizer.
    }
    \label{fig:shapes_F}
\end{figure}

\subsection{Finding Local Minimizers of the Proximal Problem}

For the considered class of penalties, \Cref{prop:prox_cone} holds and so our proximal problem reduces to a \emph{nonconvex} isotonic regression problem.
For $\rvy \in \gK_p^+$ the prox problem is
\begin{equation}
    \label{problem:prox_nonconvex}
    \underset{\rvx \in \gK_p^+}{\mathrm{arg} \, \min} \; \frac{1}{2} \norm{\rvx-\rvy}^2 + \sum_{i=1}^p  \lambda_i \psi_0(x_i) \ .  \tag{\text{$P_p$}}
\end{equation}

In this section, we describe the set of \emph{local minimizers} of Problem $(P_p)$ and show that the standard PAV algorithm converges to one of them.

For a block $B = \intervalle{q}{r}$, we define as $F_B$, or alternatively $F_{q:r}$ the \emph{scalar} prox objective function on $\sR_+$:
\begin{align}\label{eq:scalar_func_B}
    F_B = F_{q:r} : z \mapsto & \sum_{i \in B} \frac{1}{2}(y_i - z)^2 +  \lambda_i \psi_0(z)  \\
    \propto &  \; \frac12 (\bar y_B - z)^2 +  \bar \lambda_B \psi_0(z) + C \label{eq:scalar_func_B-2}
\end{align}
with $C \in \R$ a constant independent of $z$, and $\bar y_B$ (resp. $\bar \lambda_B$)  the average value of the vector $\rvy$ (resp. of the $\lambda_i$'s) on the block $B$, i.e. $\bar y_B = \frac{1}{|B|}\sum_{i \in B} y_i$.
Given a block of indices~$B$,~\eqref{eq:scalar_func_B-2} shows that minimizing the scalar function $F_B$ comes down to solving $\prox_{ \bar \lambda_B \psi_0}$ in 1D.

\begin{theorem}[Characterization of local minimizers]
    \label{theorem:ell_q_local}
    Let $\Psi$ be a sorted penalty satisfying \Cref{hyp:nonconvex} with $(\lambda_i)_{i \in \intervalle{1}{p}}$ the sequence of regularization parameters. The threshold $\tau$ is defined as in  \Cref{prop:variation_scalar}~\ref{prop:variations_item_2}.
    For a sorted vector $\rvy \in \gK^+_p$, we consider the prox problem \eqref{problem:prox_nonconvex} and denote $\gL_p$ the set of its local minimizers.
    Then, $\rvu \in \gL_p$ if and only if, on each maximal\footnote{meaning that if $B$ is extended, $\rvu$ is no longer constant on it} block $B = \intervalle{q}{r}$ where $\rvu$ is constant equal to $\tilde{u}$, the two following statements hold:
    \begin{enumerate}[label=(\roman*)]
        \item \label{theorem:ell_q_local_point_1}$\tilde{u} \in \{\chi(B),0\}$, where
        \begin{equation}
            \label{equation:ell_q_chi}
            \chi(B) :=
            \begin{cases}
                \rho^+(\bar y_B,  \bar \lambda_B) & \text{if } \bar y_B \geq \tau(  \bar \lambda_B)  \ ,\\
                0 & \text{otherwise} \ .
            \end{cases}
        \end{equation}
        \item  \label{theorem:ell_q_local_point_2} when $\tilde{u} = \chi(B)$, then for all $j \in \intervalle{q}{r-1}$ we have,
        \begin{align}
            \text{either } &\chi(\intervalle{q}{j}) \leq \chi(B) \leq \chi(\intervalle{j+1}{r}),\label{theorem:ell_q_local-1} \\
            \text{either } &\chi(\intervalle{q}{j}) \geq  \chi(\intervalle{j+1}{r}) \geq \chi(B).\label{theorem:ell_q_local-2}
        \end{align}
        Moreover if $\chi(B)>0$, $F_{q:j}$ is increasing at $\chi(B)$ and $F_{j+1:r}$ is decreasing at $\chi(B)$ while if $\chi(B)=0$ both $F_{q:j}$ and $F_{j+1:r}$ are increasing at $\chi(B)$.
    \end{enumerate}
\end{theorem}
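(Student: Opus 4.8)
The plan is to prove both implications by localizing the analysis to a single maximal block and reducing the constrained optimality question to the tangent cone of $\gK_p^+$ at $\rvu$. First I would exploit that $\rvu \in \gK_p^+$ decomposes into maximal constant blocks whose values are \emph{strictly} decreasing from one block to the next. Consequently every inter-block inequality $x_r > x_{r+1}$ is slack, so it plays no role under small perturbations: near $\rvu$ the feasible set is locally the product, over blocks $B = \intervalle{q}{r}$, of the within-block monotone cones $\{x_q \geq \dots \geq x_r\}$ (augmented with $x_r \geq 0$ for the terminal, zero-valued block). Since the objective $G_B(\rvx) := \sum_{i\in B}[\tfrac12(x_i-y_i)^2 + \lambda_i\psi_0(x_i)]$ is separable across coordinates, local minimality of $\rvu$ is \emph{equivalent} to local minimality of the constant vector $\tilde u\,\mathbf{1}_B$ for each block $B$ separately. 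This reduces the theorem to a one-block statement.

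Working on a fixed block $B$, I would first extract item~\ref{theorem:ell_q_local_point_1} from the uniform direction $\mathbf{1}_B$: along $z \mapsto z\,\mathbf{1}_B$ the objective equals $F_B(z)$, which by \eqref{eq:scalar_func_B-2} is the scalar prox objective with data $\bar y_B$ and weight $\bar\lambda_B$; hence $\tilde u$ must be a scalar local minimizer, and \Cref{prop:variation_scalar}~\ref{prop:variations_item_2} together with the definition \eqref{equation:ell_q_chi} forces $\tilde u \in \{0,\chi(B)\}$, while recording the stationarity $F_B'(\chi(B)) = 0$ when $\chi(B) > 0$. For item~\ref{theorem:ell_q_local_point_2} I would describe the tangent cone $\{d_q \geq \dots \geq d_r\}$ (with $d_r \geq 0$ when $\tilde u = 0$): its extreme rays are $\mathbf{1}_B$ and the prefix step vectors $v^{(j)} = (\underbrace{1,\dots,1}_{j-q+1},0,\dots,0)$, whose directional derivatives are $F_B'(\tilde u)$ and $F_{q:j}'(\tilde u)$. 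The first-order condition $\langle \nabla G_B(\tilde u\,\mathbf{1}_B),\, d\rangle \geq 0$ over this cone then reads $F_{q:j}'(\tilde u) \geq 0$ for every prefix; combined with $F_B = F_{q:j} + F_{j+1:r}$ and the value condition, this gives exactly ``$F_{q:j}$ increasing and $F_{j+1:r}$ decreasing at $\chi(B)$'' when $\chi(B) > 0$ (the sign of $F_{j+1:r}'$ flips because $F_B'(\chi(B)) = 0$), and the ``both increasing'' statement when $\chi(B)=0$, where the lineality $\pm\mathbf{1}_B$ is broken by non-negativity. Finally I would convert these one-sided slope conditions into the orderings \eqref{theorem:ell_q_local-1}--\eqref{theorem:ell_q_local-2} using the monotonicity of $\rho^+$ in both arguments (\Cref{prop:variation_scalar}~\ref{prop:variations_item_2}): the sign of $F_L'$ at $\chi(B)$ locates $\chi(L)$ relative to $\chi(B)$, and the two alternatives reflect whether $\chi(B)$ is read on the convex branch beyond $\chi(L)$ or in the near-zero regime.

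For the converse, first-order cone conditions are not automatically sufficient in a nonconvex problem, so I would upgrade them using the concave--convex transition of \Cref{prop:variation_scalar}~\ref{prop:variations_item_1}. The crucial structural input is the bound $\rho^+ \geq m(\lambda)$: the nonzero value $\chi(B)$ sits on the convex branch of the scalar objectives $F_i$, so there is no admissible descent direction near $\chi(B)\,\mathbf{1}_B$ and a stationary point is a genuine local minimizer. When $\chi(B)=0$ the relevant perturbations are confined to the non-negative orthant and are controlled by the threshold $\tau$ together with the verified prefix-increase conditions. Reassembling the per-block conclusions through the decoupling of the first paragraph then yields local minimality of $\rvu$.

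The main obstacle is precisely this sufficiency step: turning the finite family of tangent-cone (prefix) inequalities into true local optimality against \emph{all} nearby feasible points, including those that fragment a block into several pieces or vary non-uniformly within it. I expect to handle multi-piece perturbations either by induction on $\card{B}$, peeling off the top prefix and recursing, or by directly invoking local convexity of $G_B$ on the convex branch, which makes every admissible perturbation non-decreasing. Secondary technical points needing care are the possibly non-differentiable points $Z_\psi$ of $\psi_0$ (so all slopes must be read as one-sided derivatives) and the case $\psi_0'(0) = +\infty$, as for $\ell_q$, where $F_{q:j}'(0^+) = +\infty$ makes the ``increasing at $0$'' conditions automatic and collapses the four profiles of \Cref{fig:shapes_F} to three.
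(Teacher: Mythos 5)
Your overall architecture — reduction to independent per-block problems (inter-block inequalities are slack by maximality), the uniform ray $\mathbf{1}_B$ for item~\ref{theorem:ell_q_local_point_1}, and prefix directions for item~\ref{theorem:ell_q_local_point_2} — is sound and close in spirit to the paper's. But your necessity argument overclaims what first-order conditions deliver. The tangent-cone condition only yields $F_{q:j}'(\tilde{u})\geq 0$, which is strictly weaker than the theorem's conclusion that $F_{q:j}$ is \emph{increasing on a right-neighborhood} of $\chi(B)$. The degenerate configuration where $\tilde{u}$ coincides with the local \emph{maximum} of some prefix objective $F_{q:j}$ satisfies every one of your cone inequalities (all directional derivatives vanish or are positive), yet splitting the block strictly decreases the objective, so $\rvu$ is not a local minimizer and the claimed monotonicity fails. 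The paper closes this by arguing with perturbations directly: if $F_{q:j}$ is not increasing on a right-neighborhood of $\tilde{u}$, then, because $F_{q:j}$ has finitely many critical points, there exist arbitrarily small feasible splits that strictly decrease the objective. A second, smaller gap in the same direction: monotonicity of $\rho^+$ in $(y,\lambda)$ cannot produce the dichotomy \eqref{theorem:ell_q_local-1}--\eqref{theorem:ell_q_local-2}, because $\bar y_{q:j}\geq \bar y_{j+1:r}$ and $\bar\lambda_{q:j}\geq \bar\lambda_{j+1:r}$ push $\rho^+$ in \emph{opposite} directions, so $\chi(\intervalle{q}{j})$ and $\chi(\intervalle{j+1}{r})$ cannot be ordered that way. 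The paper needs a separate merging lemma (\Cref{lemma:merge}: pooling two wrongly-ordered blocks produces a value lying between the two) to rule out $\chi(\intervalle{q}{j})<\chi(\intervalle{j+1}{r})$ in the second alternative; your plan contains no substitute for it.

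The more serious problem is the converse, which you correctly identify as the crux but whose proposed resolution rests on a false premise. The bound $\rho^+(\bar y_B,\bar\lambda_B)\geq m(\bar\lambda_B)$ places $\chi(B)$ on the convex branch of the \emph{averaged} objective $F_B$ only, not of the individual $F_i$ or of the prefix sums $F_{q:j}$: since the $\lambda_i$ are non-increasing, $\lambda_i\geq\bar\lambda_B$ for the leading indices of $B$, and $m(\cdot)$ is non-decreasing in $\lambda$, the point $\chi(B)$ can sit strictly inside the \emph{concave} region of those coordinates. Hence $G_B$ is not locally convex near $\chi(B)\,\mathbf{1}_B$, and ``every admissible perturbation is non-decreasing by local convexity'' fails — this residual nonconvexity is precisely why the configuration \eqref{theorem:ell_q_local-2} (bottom row of \Cref{fig:loc_min}) exists at all. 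Your alternative route (peeling prefixes by induction) is the viable one and is essentially what the paper does: an arbitrary feasible perturbation $\vv_B$ is decomposed into uniform prefix steps (for its positive entries) and suffix steps (for its negative entries), summed so that each partial sum keeps the perturbed prefix/suffix constant, after which the one-sided monotonicity conditions of item~\ref{theorem:ell_q_local_point_2} (with a uniform radius $\bar\zeta$ taken over all blocks and split points) make each step non-decreasing, giving local optimality by telescoping. This decomposition-and-telescoping argument is the actual content of the sufficiency direction; your proposal leaves it as a hope, so as written the converse implication is unproven.
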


\begin{proof}
    The detailed proof is in \suppref{appendix:proofs}; it relies on a careful study of the different possible variation profiles for $F_B$. Informally, the proof relies on two main ideas:
    \begin{enumerate}[label=(\roman*)]
        \item \Cref{theorem:ell_q_local}~\ref{theorem:ell_q_local_point_1} is obtained from the fact that the value on each block $B$ of a local minimizer must itself be a local minimizer of the corresponding scalar subproblem on the block (i.e. minimizing $F_B$).
        \item \Cref{theorem:ell_q_local}~\ref{theorem:ell_q_local_point_2} is obtained from the fact that any feasible infinitesimal move around a local minimizer (i.e., splitting a block) should not decrease the objective value. This is illustrated on \Cref{fig:loc_min}. \qedhere
    \end{enumerate}
\end{proof}

\begin{figure}
    \centering
    \includegraphics[width=\linewidth]{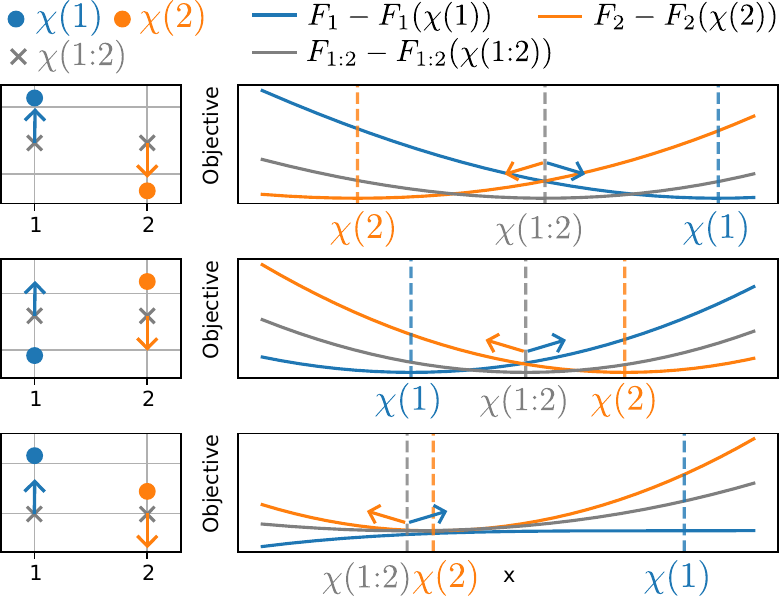}
    \caption{Illustration of \Cref{theorem:ell_q_local}~\ref{theorem:ell_q_local_point_2} with $B = \{1 , 2\}$. Left: Values of $\chi$. Right: Normalized objective functions (with $F_{1:2} = F_1 + F_2$). \textbf{Top row:} Correctly ordered blocks, merged value lies between; \emph{not} a local minimum since splitting decreases $F_1$ and $F_2$. \textbf{Middle row:} Incorrectly ordered blocks, merged value lies between; a local minimizer since splitting the block increases $F_1$ and $F_2$ \eqref{theorem:ell_q_local-1}. \textbf{Bottom row:} Correctly ordered  blocks, merged value is lower; a local minimizer since splitting the block increases $F_1$ and $F_2$ \eqref{theorem:ell_q_local-2}. Note that, from \Cref{lemma:merge,lem:bloc_above_max_subbloc} in Supplementary Material, these are the only three possible configurations to analyze.}
    \label{fig:loc_min}
\end{figure}
The second point of \Cref{theorem:ell_q_local} highlights a key difference with the weakly convex regime and the use of the PAV algorithm.
In the weakly convex regime, PAV algorithm merges blocks that violate the isotonic constraint. Yet, in the non weakly convex regime, \Cref{theorem:ell_q_local-2} points out a more pathological behavior: merging correctly ordered blocks, which PAV does not do, might actually lead to better solutions (as displayed in the bottom row of \Cref{fig:loc_min}).
Nevertheless, using \Cref{theorem:ell_q_local}, we are able to show that, while being non exhaustive, the PAV algorithm still reaches a local minimizer of the proximal problem of sorted penalties satisfying \Cref{hyp:nonconvex}.

\begin{theorem}[Convergence of PAV algorithm]
    \label{thm:PAV_loc_cv}
    The PAV \Cref{algo:pava_weak_nonneg} based on the update rule $B \mapsto \chi(B)$ with $\chi$ defined in \Cref{equation:ell_q_chi} finds a \emph{local} minimizer of the problem~$(P_p)$.
\end{theorem}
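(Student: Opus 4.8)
The plan is to prove that the vector $\rvu^*$ returned by PAV satisfies the two conditions of \Cref{theorem:ell_q_local}, which is an exact characterization of $\gL_p$. First I would record two structural features of any PAV run: at termination the output is partitioned into blocks $B_1, \dots, B_k$ carrying values $\chi(B_1) \ge \dots \ge \chi(B_k) \ge 0$ (no remaining monotonicity violation, all in $\gK_p^+$), and every pooling PAV performs merges an adjacent pair $(L,R)$, $L$ before $R$, precisely because it violates monotonicity, i.e. $\chi(L) < \chi(R)$. A preliminary remark is that the maximal constant blocks of $\rvu^*$ are unions of adjacent PAV blocks of equal value; by \Cref{lemma:merge} (pooled value lies between) such a union keeps that common value, so it suffices to verify \Cref{theorem:ell_q_local} on the PAV blocks themselves.

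Condition \ref{theorem:ell_q_local_point_1} is immediate, since by construction PAV assigns to each block $B$ the value $\chi(B) \in \{\rho^+(\bar y_B, \bar\lambda_B), 0\}$. The whole difficulty is condition \ref{theorem:ell_q_local_point_2}: for every final block $B = \intervalle{q}{r}$ and interior cut $j$, the triple $(\chi(\intervalle{q}{j}), \chi(B), \chi(\intervalle{j+1}{r}))$ must fall into one of the two admissible orderings \eqref{theorem:ell_q_local-1}--\eqref{theorem:ell_q_local-2}, and never into the ``correctly ordered, value in between'' configuration (top row of \Cref{fig:loc_min}) that would permit a beneficial split. The reduction I would exploit is that both admissible orderings share the single inequality $\chi(B) \le \chi(\intervalle{j+1}{r})$: given this suffix bound, \Cref{lemma:merge} applied to $(L,R) = (\intervalle{q}{j}, \intervalle{j+1}{r})$ settles the dichotomy — if $\chi(L) \le \chi(R)$ it returns $\chi(L) \le \chi(B) \le \chi(R)$, i.e. \eqref{theorem:ell_q_local-1}, and if $\chi(L) > \chi(R)$ the suffix bound gives $\chi(L) > \chi(R) \ge \chi(B)$, i.e. \eqref{theorem:ell_q_local-2}.

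It therefore remains to establish the \textbf{suffix bound} $\chi(B) \le \chi(S)$ for every suffix $S$ of every PAV block $B$, which I would prove by induction along PAV's merge sequence. When $B = L \cup R$ is created from a violating pair ($\chi(L) < \chi(R)$), \Cref{lemma:merge} already gives $\chi(B) \le \chi(R) \le \chi(S)$ for suffixes $S \subseteq R$ (using the inductive suffix bound on $R$); the remaining suffixes $S = L' \cup R$, with $L'$ a proper suffix of $L$, require comparing $\chi(L \cup R)$ with $\chi(L' \cup R)$ when $\chi(L) \le \chi(L')$. This last comparison — a monotonicity statement saying that enlarging the left part by a lower-valued prefix cannot raise the pooled value — is the main obstacle I anticipate, because $\chi$ is the minimizer of a \emph{nonconvex} (concave-then-convex, by \Cref{prop:variation_scalar}) scalar objective and can jump discontinuously between the minimizers $0$ and $\rho^+$. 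I expect to resolve it using the structural lemmas \Cref{lemma:merge,lem:bloc_above_max_subbloc}, which control how the pooled minimizer moves under block augmentation, together with the monotonicity of $\rho^+$ in $(\bar y_B, \bar\lambda_B)$ granted by \Cref{prop:variation_scalar}.

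Finally I would verify the derivative part of \ref{theorem:ell_q_local_point_2}, namely that $F_{q:j}$ is increasing and $F_{j+1:r}$ is decreasing at $\chi(B)$ (both increasing when $\chi(B)=0$). These signs follow from the orderings already obtained by reading off the position of $\chi(B)$ relative to the scalar minimizers $\chi(\intervalle{q}{j})$ and $\chi(\intervalle{j+1}{r})$ on the concave--convex profiles described in \Cref{prop:variation_scalar}, so this step should be routine once the inequality chains are in hand.
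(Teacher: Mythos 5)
Your reduction of condition~\ref{theorem:ell_q_local_point_2} to a suffix bound plus \Cref{lemma:merge} is sound as far as it goes, but the suffix bound itself --- which you correctly identify as the crux --- is left unproven, and it is essentially the entire content of the theorem. The step you flag as the ``main obstacle'' (if $\chi(L)\le\chi(L')$ for $L'$ a suffix of $L$, then $\chi(L\cup R)\le \chi(L'\cup R)$) is not a consequence of \Cref{lemma:merge} or \Cref{lem:bloc_above_max_subbloc}: those lemmas compare a merged block with its two constituents, whereas here you must compare the largest roots of $f_{L}+f_{R}$ and $f_{L'}+f_{R}$, two sums of nonconvex scalar derivatives, knowing only how the roots of $f_L$ and $f_{L'}$ compare. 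Nothing in \Cref{prop:variation_scalar} delivers this either, since $\chi$ can jump between $0$ and $\rho^+$ and the pairs $(\bar y,\bar\lambda)$ of $L\cup R$ and $L'\cup R$ are not comparable in a coordinated direction (the weights on $R$ differ). The paper's \Cref{lemma:pav_blocks} avoids this claim entirely by inducting over the \emph{actual merge history} of PAV: every pooling comes with a known inequality at merge time (a forward merge absorbs a singleton whose $\chi$ exceeds the current block value, a backward merge absorbs a predecessor), and chaining these via \Cref{lemma:merge} and \Cref{lem:bloc_above_max_subbloc} yields the \emph{two-sided} bound $\chi(\intervalle{q}{j})\le\chi(B)\le\chi(\intervalle{j+1}{r})$ at every cut; there, the suffix bound is \emph{deduced} from the prefix bounds and the backward-pass inequalities, not proved first. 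In particular, PAV outputs always satisfy \eqref{theorem:ell_q_local-1}, and configuration \eqref{theorem:ell_q_local-2} never arises.

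This stronger conclusion matters because your last paragraph contains a second gap. The derivative requirements in \Cref{theorem:ell_q_local}~\ref{theorem:ell_q_local_point_2} do \emph{not} follow ``routinely'' from the orderings when a cut is in configuration \eqref{theorem:ell_q_local-2}: knowing $\chi(\intervalle{q}{j})\ge\chi(\intervalle{j+1}{r})\ge\chi(B)$ does not determine the sign of $F_{q:j}'$ at $\chi(B)$, because $\chi(B)$ may fall in the concave region of $F_{q:j}$ on either side of its local maximum --- this is exactly why the theorem states the derivative conditions as separate requirements rather than consequences of the ordering. Since your dichotomy cannot exclude \eqref{theorem:ell_q_local-2}, you would owe this verification and have no tool to carry it out. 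Under \eqref{theorem:ell_q_local-1} the signs do follow, but even there you need more than the ordering: writing $R=\intervalle{j+1}{r}$, you need $\chi(B)\ge m(\bar\lambda_B)\ge m(\bar\lambda_{R})$ (from the lower bound in \Cref{prop:variation_scalar} and the non-increasing $\lambda_i$) to place $\chi(B)$ in the convex region of $F_{j+1:r}$, where lying left of $\rho^+(\bar y_R,\bar\lambda_R)$ forces the decrease. In short: prove the two-sided bound by following PAV's merges, as the paper does, and both gaps disappear.
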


\proof{
The proof is given in \suppref{appendix:proofs}; it comes down to showing that each pooling operation in the PAV \Cref{algo:pava_weak_nonneg} preserves \Cref{theorem:ell_q_local-1} which is a sufficient condition to reach local optimality.
}

\subsection{A Decomposed PAV algorithm (D-PAV)}

\label{subsection:ell_q_oscar}
In this section we propose a new algorithm (D-PAV), based on the  PAV algorithm, which is designed to explore additional local minima.
More precisely, under the additional \Cref{hypothesis:global_mins}, we study necessary conditions on global minimizers of the nonconvex sorted  proximal problem, which naturally give a heuristic to modify the PAV algorithm.
This new hypothesis must be checked for each sorted nonconvex penalty considered.
In the case of $\ell_q$, we prove that it holds as long as the non-increasing  sequence of regularization parameters $(\lambda_i)_i$ is linear (OSCAR-like) or concave.

\begin{hypothesis}
    \label{hypothesis:global_mins}
    The non-increasing  sequence of regularization parameters $(\lambda_i)_i$ is such that, for each block of indices $B := \intervalle{q}{r} \subset \intervalle{1}{p}$ and each sub-block $B_1 := \intervalle{q}{r'}$ with $r' \leq r$,
    \begin{equation}
        \rho^+(T(\bar \lambda_B), \bar \lambda_B) \geq m(\bar \lambda_{B_1}).
    \end{equation}
\end{hypothesis}

\begin{example}[Sorted $\ell_q$]
    \label{example:extension_concave}
        We assume the regularization parameters $(\lambda_i)_i$ write as
       $
            \lambda_i = \Lambda(i) \ ,
       $
        with $\Lambda$ \emph{concave}, non-increasing and non-negative.
        Then, \Cref{hypothesis:global_mins} holds for the $\ell_q$ penalty with $q \in (0,1)$.
        Proof is given in \suppref{proof:concave_ellq}.
\end{example}

\begin{theorem}[Necessary conditions on global minimizers]
    \label{theorem:oscar_ell_q_global_nec}
    Let the regularization parameters $(\lambda_i)_i$ be such that \Cref{hypothesis:global_mins} is satisfied.
    Let $\rvx^p$ be a global minimizer of $(P_p)$. Then, $\rvx^p \in \gS_p$, with $\gS_p$ given by
    \begin{align*}
        \gS_p & := \left\{ (\rvu^{i^\star-1}, \chi(\intervalle{i^\star}{p}),\dots, \chi(\intervalle{i^\star}{p}) ), \rvz^1, \dots , \rvz^p \right\} \ ,\\
        \rvz^i & := (\rvu^{i-1}, 0 , \dots, 0) \quad \forall  i \in \intervalle{1}{p} \ , \\
        \rvu^{i} & \in \gL_{i} \quad \forall  i \in \intervalle{1}{p} \ ,
    \end{align*}
    where $i^\star$ is the largest index such that  there exists $\rvu^{i^\star-1} \in \gL_{i^\star-1}$ with $(\rvu^{i^\star-1}, \chi(\intervalle{i^\star}{p}),\dots, \chi(\intervalle{i^\star}{p} ) )$  feasible.
\end{theorem}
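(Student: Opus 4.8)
The plan is to exploit that a global minimizer is in particular a local minimizer, so it inherits the block structure of \Cref{theorem:ell_q_local}, and then to pin down the shape of its \emph{last} block. Since a global minimizer $\rvx^p$ lies in $\gK_p^+$, it is non-increasing and non-negative, so its zero entries form a suffix and its support is a prefix $\intervalle{1}{k}$; by \Cref{theorem:ell_q_local}~\ref{theorem:ell_q_local_point_1}, on each maximal constant block $B$ its value is $\chi(B)$ or $0$. I would then split the analysis according to whether the last entry $x^p_p$ vanishes.

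First, the zero-tail case $x^p_p = 0$. Writing $\intervalle{1}{k}$ for the support, the objective of $(P_p)$ separates as the objective of $(P_k)$ on the head plus the fixed quantity $\tfrac12\sum_{i>k} y_i^2$, and the constraint restricted to the head is exactly $\gK_k^+$ (the coupling constraint $x_k \ge x_{k+1} = 0$ is already encoded in $\gK_k^+$). Hence the head $(x^p_1,\dots,x^p_k)$ is a global minimizer of $(P_k)$, a fortiori an element of $\gL_k$, so $\rvx^p = \rvz^{k+1}$ with $k \in \intervalle{0}{p-1}$. This reproduces exactly the candidates $\rvz^1,\dots,\rvz^p$.

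Second, the nonzero-tail case $x^p_p > 0$. Let $\intervalle{j}{p}$ be the maximal constant block containing $p$; by \Cref{theorem:ell_q_local}~\ref{theorem:ell_q_local_point_1} its value is $\chi(\intervalle{j}{p}) = \rho^+(\bar y_{\intervalle{j}{p}}, \bar\lambda_{\intervalle{j}{p}}) > 0$. Maximality forces a strict decrease $x^p_{j-1} > \chi(\intervalle{j}{p})$, so for small head perturbations the boundary constraint is inactive and, by separability and local optimality of $\rvx^p$, the head $(x^p_1,\dots,x^p_{j-1})$ is a local minimizer of $(P_{j-1})$, i.e. lies in $\gL_{j-1}$, with $x^p_{j-1} \ge \chi(\intervalle{j}{p})$. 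The pair $(j,\text{head})$ is thus feasible in the sense of the theorem, so $j \le i^\star$; it remains to prove $j = i^\star$.

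The main obstacle is precisely this equality, i.e. ruling out a too-large last block $j < i^\star$. Note first that a purely infinitesimal argument cannot work: since $\rvx^p$ is already a local minimizer, \Cref{theorem:ell_q_local}~\ref{theorem:ell_q_local_point_2} forbids any improving infinitesimal split of $\intervalle{j}{p}$. The plan is therefore a non-infinitesimal exchange argument: assuming $j < i^\star$, I would compare $\rvx^p$ with the feasible $i^\star$-configuration, which shrinks the tail block to $\intervalle{i^\star}{p}$ and re-optimizes the freed coordinates $\intervalle{j}{i^\star-1}$ into a genuine local-minimizer head, and show that the objective strictly decreases — contradicting global optimality. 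This is where \Cref{hypothesis:global_mins}, in the form $\rho^+(T(\bar\lambda_B),\bar\lambda_B) \ge m(\bar\lambda_{B_1})$, does the essential work: it guarantees that the re-optimized head values on the freed coordinates stay above the tail value $\chi(\intervalle{i^\star}{p})$, preserving monotone feasibility, and remain in the convex region where the scalar subproblems behave monotonically, so that the comparison is valid; the merge lemmas (\Cref{lemma:merge,lem:bloc_above_max_subbloc}) control $\chi$ under the associated (un)merging. Carefully tracking the objective across the possibly many block configurations of the re-optimized head, and checking that \Cref{hypothesis:global_mins} forces a strict decrease, is the delicate technical heart of the argument. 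Once $j = i^\star$ is established, collecting the two cases yields $\rvx^p \in \gS_p$.
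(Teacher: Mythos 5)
Your zero-tail case and the inequality $j \le i^\star$ are correct and coincide with the paper's own argument: the paper likewise splits on the value $\tilde{x}$ of the last block, reduces the zero-tail case to optimality of the head for the smaller problem, and obtains $j \le i^\star$ from feasibility together with the definition of $i^\star$. The genuine gap is that the only hard step --- ruling out $j < i^\star$ when the tail is nonzero --- is not proved: you label it ``the delicate technical heart'' and replace it by a plan. Moreover, the plan is unlikely to go through as described. You propose to show that the $i^\star$-configuration $\mathbf{v} = (\rvu^{i^\star-1}, \chi(\intervalle{i^\star}{p}),\dots,\chi(\intervalle{i^\star}{p}))$ has \emph{strictly} smaller objective than $\rvx^p$; but $\rvu^{i^\star-1}$ is merely \emph{some} local minimizer of $(P_{i^\star-1})$ making $\mathbf{v}$ feasible, and local minimizers can have arbitrarily poor objective values, so nothing guarantees $F(\mathbf{v}) < F(\rvx^p)$, and no tool you invoke compares objectives across different block partitions: \Cref{lemma:merge,lem:bloc_above_max_subbloc} only order values of $\chi$ under merges. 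You also mischaracterize \Cref{hypothesis:global_mins}: it does not ensure that ``re-optimized head values stay above the tail value''; its actual role, via \Cref{lemma:merging}, is purely structural --- it forbids $\chi(B_1) > \chi(B_2) > \chi(B_1 \cup B_2) > 0$ when $\chi(B_1 \cup B_2)$ is the global minimizer of $F_{B_1 \cup B_2}$.

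The paper's proof of this step uses no objective comparison at all, except the elementary remark that the tail value $\tilde{x}$ must be the \emph{global} minimizer of the scalar function $F_{j:p}$ (else zeroing the tail would improve). Assuming $j < i^\star$, it applies \Cref{theorem:ell_q_local}~\ref{theorem:ell_q_local_point_2} to $\rvx^p$ at the split $i^\star - 1$ and uses \Cref{lemma:merging} to exclude alternative \eqref{theorem:ell_q_local-2}, giving $\chi(\intervalle{j}{i^\star-1}) \le \tilde{x} \le \chi(\intervalle{i^\star}{p})$; then, writing $\intervalle{s}{t}$ for the block of $\rvu^{i^\star-1}$ containing $j$, feasibility of $\mathbf{v}$ gives $\chi(\intervalle{i^\star}{p}) < \chi(\intervalle{s}{t})$, local optimality of $\mathbf{v}$ gives $\chi(\intervalle{s}{t}) \le \chi(\intervalle{j}{t})$, and the same merging argument applied to $\rvx^p$ gives $\chi(\intervalle{j}{t}) \le \tilde{x}$. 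Chaining these yields $\chi(\intervalle{i^\star}{p}) < \tilde{x} \le \chi(\intervalle{i^\star}{p})$, a contradiction (the remaining subcase, where $0$ is the global minimizer of $F_{i^\star:p}$, is dismissed by zeroing the tail of $\rvx^p$). To complete your proof you would need either to reproduce this kind of ordering contradiction, or to actually establish the strict objective decrease you assert --- which the cited hypotheses do not supply.
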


\proof{
The proof of \Cref{theorem:oscar_ell_q_global_nec} is given in \suppref{proof:theorem:oscar_ell_q_global_nec}.
\Cref{hypothesis:global_mins} restricts the set of points where \Cref{theorem:ell_q_local-2} occurs.
This simplifies the setting, as the last block of the solution, which is always a \emph{global} minimizer for its related \emph{scalar} block function, can only be obtained by merging unsorted blocks.
Then, the proof establishes these necessary conditions on \emph{global} minimizers through a geometrical approach. It constructs other local minimizers using conditions outlined in \Cref{theorem:ell_q_local}, and show they can not achieve global optimality.
}

\begin{proposition}\label{propo:PAV_correct_last_block}
Assume the global minimizer of $(P_p)$ is given by $(\rvu^{i^\star-1}, \chi(\intervalle{i^\star}{p}),\dots, \chi(\intervalle{i^\star}{p}) )$ where $\rvu^{i^\star-1} \in \gL_{i^\star-1}$ with $i^\star$ is defined as in \Cref{theorem:oscar_ell_q_global_nec}.
    Then, the PAV \Cref{algo:pava_weak_nonneg}, based on the update rule $B \mapsto \chi(B)$ in~\eqref{equation:ell_q_chi}, returns a solution with the correct last block.
\end{proposition}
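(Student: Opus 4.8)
The plan is to show that PAV terminates with terminal (last) block exactly $\intervalle{i^\star}{p}$, carrying the value $v^\star := \chi(\intervalle{i^\star}{p})$. By \Cref{thm:PAV_loc_cv}, the vector $\rvx$ returned by PAV is a local minimizer of $(P_p)$; I write its \emph{maximal} terminal constant block as $\intervalle{s}{p}$ with value $w := \chi(\intervalle{s}{p})$, and let $\rvu$ be the restriction of $\rvx$ to $\intervalle{1}{s-1}$. Since that block is terminal and maximal, the entry of $\rvx$ at index $s-1$ is strictly larger than $w$, so small perturbations of $\rvu$ stay feasible at the junction; hence $\rvu \in \gL_{s-1}$ and $(\rvu, w, \dots, w)$ is feasible with $w = \chi(\intervalle{s}{p})$. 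It therefore suffices to prove $s = i^\star$, and then \eqref{equation:ell_q_chi} delivers the announced value. Throughout I use that, by the hypothesis of the proposition together with \Cref{theorem:oscar_ell_q_global_nec} and \Cref{hypothesis:global_mins}, the last block of the global minimizer realizes the scalar global minimizer of $F_{i^\star:p}$ and satisfies \eqref{theorem:ell_q_local-1}: for every $j \in \intervalle{i^\star}{p-1}$, $\chi(\intervalle{i^\star}{j}) \le v^\star \le \chi(\intervalle{j+1}{p})$.

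First I would establish the upper bound $s \le i^\star$. The preceding paragraph exhibits $(\rvu, \chi(\intervalle{s}{p}), \dots, \chi(\intervalle{s}{p}))$ as a feasible point whose prefix lies in $\gL_{s-1}$, which is precisely the admissibility condition on indices appearing in \Cref{theorem:oscar_ell_q_global_nec}. As $i^\star$ is defined to be the \emph{largest} such index, $s \le i^\star$ follows at once. As a consistency check, the block condition \eqref{theorem:ell_q_local-1} for the global minimizer shows that every internal split of $\intervalle{i^\star}{p}$ violates monotonicity, since $\chi$ of any left sub-block is $\le v^\star \le \chi$ of the complementary right sub-block; hence the pooling rule $B \mapsto \chi(B)$ merges all of $\intervalle{i^\star}{p}$ and pins the common value to $v^\star$.

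The crux is the reverse bound $s \ge i^\star$, that is, PAV does not over-merge past the boundary $i^\star$. Assume for contradiction $s < i^\star$. Then during its backward sweep PAV merged the block immediately to the left of $i^\star$, say $\intervalle{a}{b}$ with $b < i^\star$, into the growing suffix $\intervalle{b+1}{p}$, which requires $\chi(\intervalle{a}{b}) < \chi(\intervalle{b+1}{p})$; moreover applying \eqref{theorem:ell_q_local-1} to the terminal block of $\rvx$ at the split $j = i^\star-1$ gives $w = \chi(\intervalle{s}{p}) \le \chi(\intervalle{i^\star}{p}) = v^\star$. I would rule this out by controlling how the pooled value of a suffix evolves as it is extended leftward across $i^\star$. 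Because $\rvy \in \gK_p^+$ is sorted, enlarging a suffix to the left can only increase both $\bar y$ and $\bar\lambda$; by \Cref{prop:variation_scalar} the nonzero stationary value $\rho^+$ is non-decreasing in $y$ and non-increasing in $\lambda$, so the two effects compete. This is exactly where \Cref{hypothesis:global_mins} is needed: its inequality $\rho^+(T(\bar\lambda_B), \bar\lambda_B) \ge m(\bar\lambda_{B_1})$, combined with the lower bound $\rho^+(\,\cdot\,;\lambda) \ge m(\lambda)$ and the merging/betweenness lemmas underlying \Cref{theorem:ell_q_local}, should force the value of any block ending at an index $< i^\star$ to stay $\ge v^\star$. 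This contradicts $\chi(\intervalle{a}{b}) < \chi(\intervalle{b+1}{p})$ and hence rules out $s < i^\star$.

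Combining the two bounds yields $s = i^\star$ and $w = v^\star = \chi(\intervalle{i^\star}{p})$, so PAV returns the correct last block. The main obstacle is the reverse bound of the last paragraph: unlike the weakly convex (quadratic-pooling) case, where suffix pooled values are monotone in the data, here the simultaneous increase of $\bar y$ and $\bar\lambda$ makes monotonicity of $\chi(\intervalle{t}{p})$ in $t$ fail in general, and it is precisely \Cref{hypothesis:global_mins}, fed through the betweenness lemmas, that must be invoked to prevent PAV from merging beyond the optimal boundary.
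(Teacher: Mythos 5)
Your setup and the first half of the argument are sound and coincide with the paper's: PAV returns a local minimizer (by \Cref{thm:PAV_loc_cv}) of the form $(\rvv, \chi(\intervalle{s}{p}),\dots,\chi(\intervalle{s}{p}))$ with $\rvv \in \gL_{s-1}$, and since $i^\star$ is by definition the \emph{largest} index admitting such a feasible completion, $s \le i^\star$ follows immediately. The genuine gap is the reverse bound $s \ge i^\star$, which you yourself flag as "the main obstacle": you never prove it. Your sketch — tracking how $\chi$ of a suffix evolves under leftward extension, and invoking \Cref{hypothesis:global_mins} to "force the value of any block ending at an index $< i^\star$ to stay $\ge v^\star$" — is not carried out, and it is doubtful it can be as stated: the claimed invariant is not available from the cited tools, since merging \emph{correctly ordered} blocks can push $\chi$ below both constituent values (\Cref{theorem:ell_q_local-2}, bottom row of \Cref{fig:loc_min}), so no lower bound of the form $\chi(B_1\cup B_2)\ge\min(\chi(B_1),\chi(B_2))$ holds in general; \Cref{lemma:merging} excludes this pathology only when the merged value is the positive \emph{global} scalar minimizer, which the intermediate PAV blocks need not realize. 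Your description of the offending merge event ($\chi(\intervalle{a}{b}) < \chi(\intervalle{b+1}{p})$) also mischaracterizes PAV: at the moment a block first straddles $i^\star$, the right-hand block generally does not extend to $p$, so there is no single inequality of that form to contradict.

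The paper's argument for the reverse bound is different and avoids analyzing PAV's merge history altogether. Assuming $s < i^\star$, it looks at the \emph{global minimizer's prefix} $\rvu^{i^\star-1} \in \gL_{i^\star-1}$ and its maximal constant block $B=\intervalle{q}{r}$ containing $s$ (so $r \le i^\star - 1$), with value $\tilde u$. Applying \Cref{theorem:ell_q_local}~\ref{theorem:ell_q_local_point_2} to that block gives $\tilde u \le \chi(\intervalle{s}{r})$ (both alternatives \eqref{theorem:ell_q_local-1} and \eqref{theorem:ell_q_local-2} imply it). Then \Cref{lemma:pav_blocks}, applied to PAV's last block $\intervalle{s}{p}$, yields $\chi(\intervalle{s}{r}) \le \chi(\intervalle{s}{p}) \le \chi(\intervalle{i^\star}{p})$ (splits at $j=r$ and $j=i^\star-1$). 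Chaining these gives $\tilde u \le \chi(\intervalle{i^\star}{p})$, contradicting the feasibility of the global minimizer, whose prefix entries must exceed the value $\chi(\intervalle{i^\star}{p})$ of its (maximal) last block. Note that this chain uses only \Cref{theorem:ell_q_local} and \Cref{lemma:pav_blocks} — \Cref{hypothesis:global_mins} enters only through \Cref{theorem:oscar_ell_q_global_nec}, i.e., through the definition of $i^\star$ and the premise of the proposition, not through any monotonicity argument on suffix values. The key idea you are missing is precisely this interplay between the structure of the global minimizer's prefix and the block property of PAV's output.
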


\begin{proof}
    Let $(\rvu^{i^\star-1}, \chi(\intervalle{i^\star}{p}),\dots, \chi(\intervalle{i^\star}{p}) )$ be the  global minimizer of $(P_p)$.
    The PAV algorithm returns a \emph{feasible solution}, which is also a local minima of $(P_p)$ (\Cref{thm:PAV_loc_cv}) given by $(\rvv^{j^\star-1}, \chi(\intervalle{j^\star}{p}),\dots, \chi(\intervalle{j^\star}{p} ) )$ with $\rvv^{j^\star-1} \in \gL_{j^\star-1}$.
    By \Cref{theorem:oscar_ell_q_global_nec}, this feasibility ensures that $j^\star \leq i^\star$.
    Now, by contradiction, assume $j^\star < i^\star$.
    Then, consider the maximal block $B = \intervalle{q}{r}$ in $\rvu^{i^\star-1}$ such that $j^\star \in B$.
    Denote $\tilde u$ the value of $\rvu^{i^\star-1}$ on this block $B$.
    By \Cref{theorem:ell_q_local}~\ref{theorem:ell_q_local_point_2}
    \begin{equation}
        \tilde u \leq \chi(\intervalle{j^\star}{r}).
    \end{equation}
    Yet, by \Cref{lemma:pav_blocks}, all the blocks constructed by the PAV algorithm satisfy \Cref{theorem:ell_q_local-1} of \Cref{theorem:ell_q_local}:
    \begin{equation}
        \chi(\intervalle{j^\star}{r}) \leq \chi(\intervalle{j^\star}{p}),
    \end{equation}
    and, by \Cref{theorem:ell_q_local}~\ref{theorem:ell_q_local_point_2}, it also holds
    \begin{equation}
         \chi(\intervalle{j^\star}{p}) \leq \chi(\intervalle{i^\star}{p}).
    \end{equation}
    The previous equations give  $\tilde u \leq \chi(\intervalle{i^\star}{p})$
    which contradicts the feasibility of the global optimizer.
\end{proof}

\paragraph*{The D-PAV algorithm}
From \Cref{prop:variation_scalar}, we know that scalar problems admits at most two local minimizers (see also \Cref{fig:shapes_F}).
The update rule in the PAV algorithm can thus be defined in two distinct ways: using either the threshold $\tau(\lambda)$ or $T(\lambda)$. The results in \Cref{thm:PAV_loc_cv} and \Cref{propo:PAV_correct_last_block} are derived considering $\tau(\lambda)$, specifically the $\chi$ update defined in~\eqref{equation:ell_q_chi}, which updates
 the blocks with the \emph{largest local minimizer} of the corresponding block scalar problem. %
This may seem counter-intuitive and one may find more natural to %
update each block with the global minimizer of block scalar problem, that is using $T(\lambda)$.
However, this would force too many entries to $0$ (see \Cref{fig:counter_ex}).

While we have shown that the PAV algorithm with the $\chi$ update rule finds a local minima and correctly identifies the last block of the global optimizer whenever this one is given by $\chi$ and not by $0$, it does not explore local minimizers with a last block of $0$, denoted as $\rvz$ in \Cref{theorem:oscar_ell_q_global_nec} (\Cref{fig:counter_ex}).

This motivates our proposed \textit{decomposed PAV algorithm} (D-PAV, \Cref{algo:sorted_ell_q}) to explicitly consider these local minimizers.
Specifically, as we compute the PAV solution, we store the intermediate solutions of size $k \leq p$, complete them with zeros at the end of the vector, and finally keep the one leading to the smallest objective value.

\begin{figure}
    \centering
    \includegraphics[width=\linewidth]{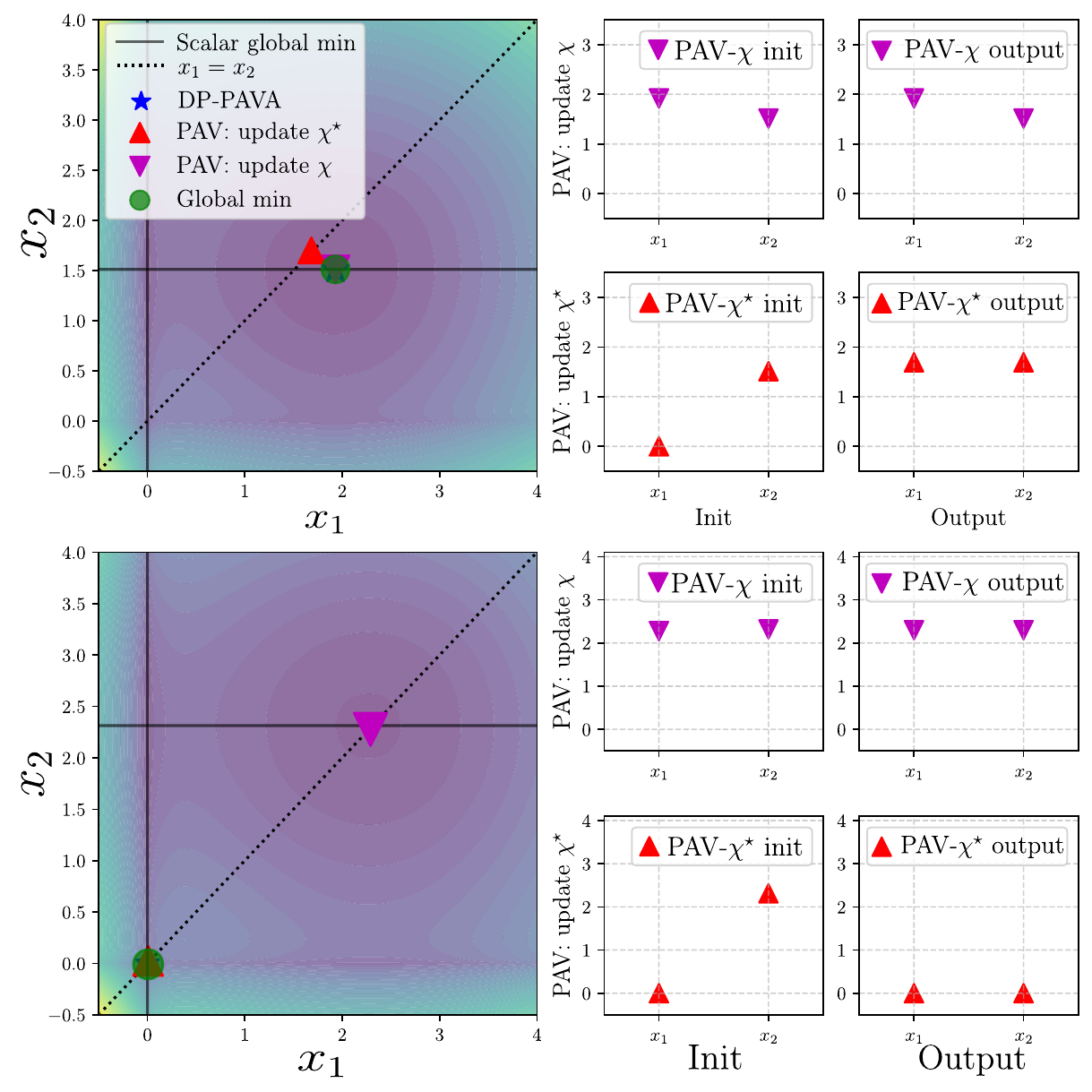}
    \caption{2D sorted $\ell_q$ proximal computation. Comparison of PAV with $\chi$ update rule (the largest local minimizer of the scalar block problem, threshold $\tau(\lambda)$), PAV with $\chi^\star$ update rule (the global minimizer of the scalar block problem, threshold $T(\lambda)$) and D-PAV.
    Top: PAV with $\chi^\star$ update rule leads to unwanted merging due to too many $0$ in the initialization.
    Bottom: While PAV with the $\chi$ update rule recovers a local minima, it does not consider solutions with a last block of $0$.
    In both cases, the proposed D-PAV finds the global minimizer.
    In all cases, the initialization corresponds to applying $\chi$ or $\chi^\star$ pointwise, i.e., an unconstrained local minimizer.
    }
    \label{fig:counter_ex}
\end{figure}

\begin{algorithm}[t]
  \caption{D-PAV for sorted nonconvex penalties}
  \label{algo:sorted_ell_q}
  \begin{algorithmic}
  \STATE{Data $\rvy \in \sR^p$, regularization sequence $(\lambda_i)_{i \in \intervalle{1}{p}}$ }
  \STATE{ $\rvy \gets \vert \rvy \vert_\downarrow$}
  \FOR{$k \in \llbracket 1, p \rrbracket $}
  \STATE{$\rvx^k_{1:k} \gets $ PAV $(\rvy_{[k]}, \bm{\lambda}_{[k]})$ }
  \STATE{$\rvx^k_{i+1:p} \gets 0$}
  \IF{$F(\rvx^k) < F(\rvx)$}
  \STATE{ $\rvx \leftarrow \rvx^k$}
  \ENDIF
  \ENDFOR
  \RETURN $\rvx$
  \end{algorithmic}
\end{algorithm}

\begin{remark}
  Our algorithm D-PAV may fail to find the global minimizer because, for each subproblem of size $k < p$, it only considers one local minimizer among all possible ones, the one determined by $\mathrm{PAV}(\rvy_{[k]}, \bm{\lambda}_{[k]})$.
  In particular, the PAV algorithm never considers local minimizers in the shape of \Cref{theorem:ell_q_local-2}.
  However, we argue that the instances $(\rvy, \bm{\lambda})$ which lead to this undesirable behavior are very rare.
  Besides, even in such adversarial cases, our experiments always find out that D-PAV succesfully recovers the global minimizer.
  See \appref{appendix:expe_limits} for more details on the numerics.
\end{remark}

\section{Experiments}
\label{section:expe}
The code to reproduce all the experiments will be released on GitHub upon acceptance.
It relies on \texttt{sklearn} \cite{pedregosa2011scikit} and \texttt{skglm} \cite{bertrand2022beyond}.

\subsection{Denoising}\label{sub:denoising}
In this experiment, we demonstrate the property that, generalizing from classical $\ell_1$ versus nonconvex penalties, %
\emph{sorted} nonconvex penalties lead to estimators with less amplitude bias than their convex counterpart SLOPE.

The setup is the following:
we generate a true vector $\rvx^* \in \sR^{28}$ with 4 equal-sized clusters of coefficients, on which it takes the values $7, -5, 3$ and $-1$.
The clusters are taken contiguous simply to visualize them better in \Cref{fig:denoising_solutions}, but we stress that this does not help the algorithms: their performance would be the same if we shuffled the vector to be recovered.
We add i.i.d Gaussian noise of standard deviation $0.3$ to $\rvx$ to create $1000$ samples of noisy vector $\rvx$.

To denoise $\rvx$, we apply the proximal operators of SLOPE, sorted MCP and sorted $\ell_{1/2}$ respectively.
The sequence $(\lambda_i)_i$ is selected\footnote{for $\ell_{1/2}$ we use $r (28 - i)^{1.5}$ as it empirically turned out to yield better performance.} as $\lambda_i = r (28 - i)$, with $r > 0$ controlling the strength.
For each of these three sorted penalties, we compute denoised versions of $\rvx$  for 100 geometrically-spaced values of $r$.
For each resulting denoised vector $\hat \rvx$, we define the normalized error as
$\Vert \hat \rvx - \rvx^* \Vert / \Vert \rvx^* \Vert$.
To measure the cluster recovery performance, we define the binary matrix $M(\rvx) \in \mathbb{R}^{28 \times 28}$ as having entry $i, j$ equal to 1 if $|x_i| = |x_j|$ (i.e. $i$ and $j$ are in the same cluster in $\rvx$) and 0 otherwise.
The reported F1 score is then the F1 score between $M(\hat \rvx)$ and $M(\rvx^*)$, i.e. the F1 score of the classification task ``predict if two entries of the vector belong to the same cluster''.
An F1 score of 1 means that $\hat \rvx$ perfectly recovers the clusters of $\rvx^*$.

In \Cref{fig:denoising_rmse_f1} we report the variations of F1 score and error as a function of penalty strength $r$.
For each penalty, we define the optimal  $r$ as the lowest $r$ for which the F1 score is superior to 0.75 (vertical dashed lines).
One can see that, for an equivalent F1 score, sorted nonconvex  penalty reach a lower error.
Denoised vectors for the choice of $r$ that gives a F1 score equals to 0.75 are displayed in \Cref{fig:denoising_solutions} for a single repetition only which confirms that, for an equivalent cluster recovery performance (as measured by F1 score), the nonconvex penalties reach a much lower error $\Vert \hat x - x^* \Vert$.

\begin{figure}[t]
    \centering
    \includegraphics[width=\linewidth]{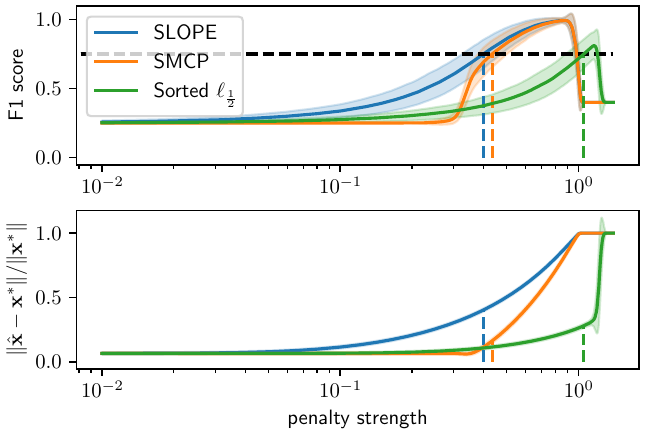}
    \caption{
        \emph{Top}: F1 score (averaged on the $1000$ samples, with standard deviation) for cluster recovery, as a function of the penalty strength.
        \emph{Bottom}: Normalized estimation error (averaged on the $1000$ samples, with standard deviation), as a function of the penalty strength.
        For each penalty, the dashed line is set at the F1 score equal to $0.75$ and displays a tradeoff between small estimation error and high F1 score.
        }
    \label{fig:denoising_rmse_f1}
\end{figure}

\subsection{Regression}

We now turn to the setting of \Cref{pb:composite}, with a least squares datafit $g(\rvx) =  \frac{1}{2} \Vert \mA \rvx - \rvb \Vert^2$.
The setup is the following.
The matrix $\mA$ is in $\sR^{50 \times 100}$, it is random Gaussian with Toeplitz-structured covariance $\Sigma$: $\Sigma_{ij} = (0.98^{|i - j|})_{i,j}$.
The ground truth $\rvx^*$ has entries from 1 to 10 and 50 to 55 equal to 5, the rest being 0.
The target $\rvb$ is $\mA \rvx^* + \bm{\epsilon}$ where $\bm{\epsilon}$ has independent Gaussian entries, scaled so that the SNR $\norm{\mA \rvx^*} / \norm{\bm{\epsilon}}$ is equal to $7$.
All penalties use the following sequence of regularization parameters $ r \, (1, 2^{\frac 2 3} - 1, \dots, 100^{\frac 2 3} - 99^\frac{2 3} )$ in the spirit of the quasi-spherical OSCAR sequence \cite{nomura2020exact}, where $r > 0$ a scaling parameter to control penalty strength.

In \Cref{fig:regression_stem} we report regression results for four values of penalization strength.
It highlights the bias of SLOPE, which creates many false positives and still underestimates the ground truth strength as the penalty strength diminishes.
On the contrary, sorted $\ell_{1/2}$ and MCP have few false positives and better identify the magnitude of the true nonzero coefficients.
\begin{figure*}[!t]
    \centering
    \hspace{3mm} \includegraphics[width=0.6\linewidth]{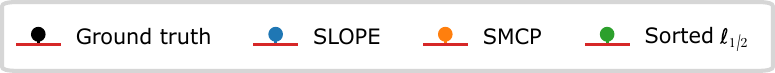}
    \includegraphics[width=\linewidth]{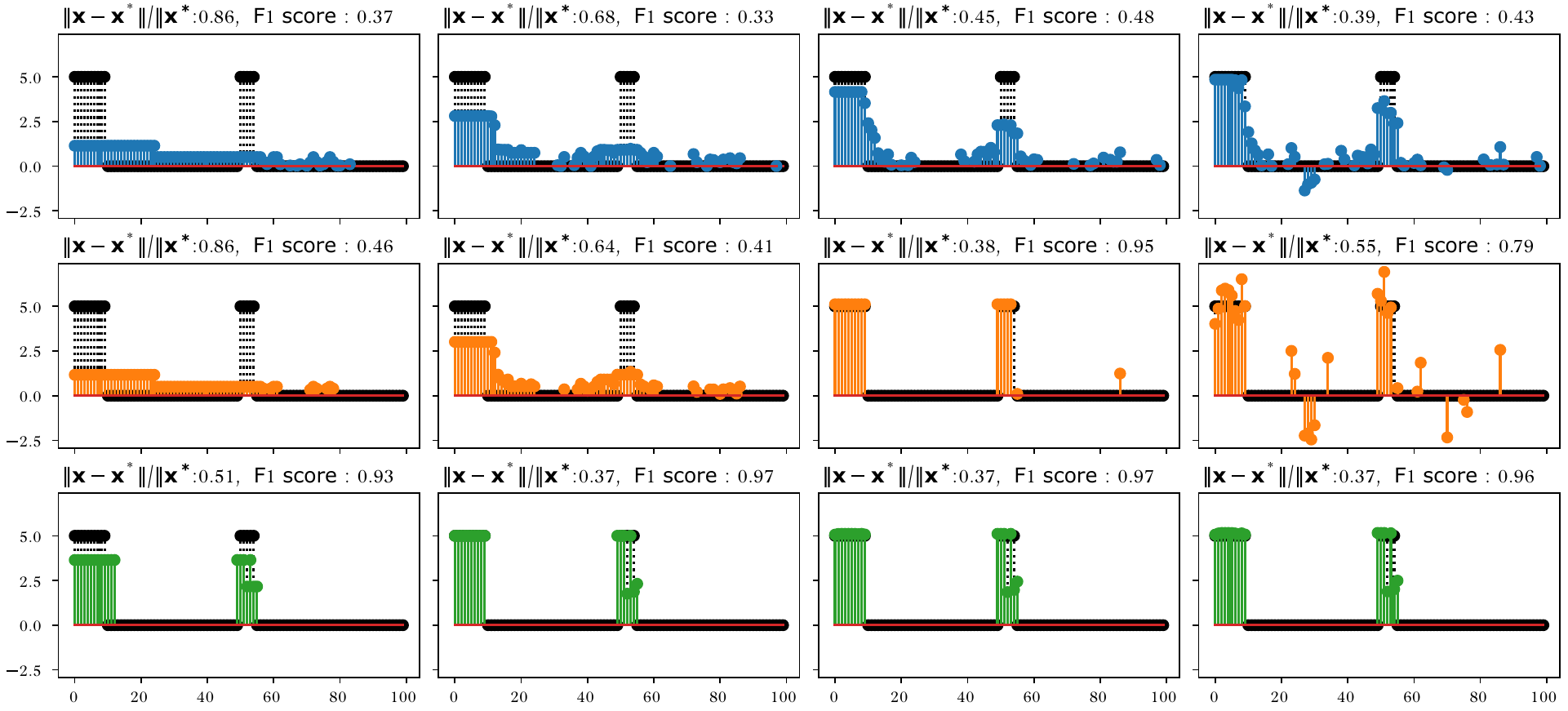}
    \caption{Solutions of $\varPsi$-penalized regression for SLOPE (blue), sorted MCP (yellow) and sorted $\ell_{1/2}$ (green), for decreasing penalty strengths (from left column to right).
    The ground truth is in black; SLOPE has more false positive and bias.
        }
    \label{fig:regression_stem}
\end{figure*}

\subsection{Real Data}

We illustrate here the solution paths on the \textit{diabetes} data set \cite{lsa2004}:
For these numerics, we compare various choices of penalties on a least square regression problem: $b \in \sR^{442}$ measures the disease progression for $n =442$ diabetes patients while the columns of $A \in \sR^{442 \times 10}$ describe $d = 10$ relevant features (age, sex, blood pressure, etc.).
We focus on a low-dimensional problem so as to visualize the solution paths.
For the sorted penalties (i.e. SLOPE, SMCP, Sorted $\ell_{1/2}$), we take $\lambda =  r (1, 2^{1/4} - 1, \dots , 11^{\frac 1 4} - 10^{\frac 1 4})$ in the spirit of the quasi-spherical OSCAR sequence \cite{nomura2020exact}, and $r$ varies along the path.
For the MCP and the sorted MCP penalties, the parameter $\gamma$ is equal to $1$.
Results are displayed on \Cref{fig:sol_paths}.
We observe that all sorted penalties indeed cluster features (so does SLOPE when compared to LASSO) and decreases amplitude bias (so do MCP and $\ell_{1/2}$ when compared to LASSO).

\begin{figure*}
    \centering
    \subfloat[Non sorted penalties]{\includegraphics[width=0.48\linewidth, trim={16 10 45 40},clip]{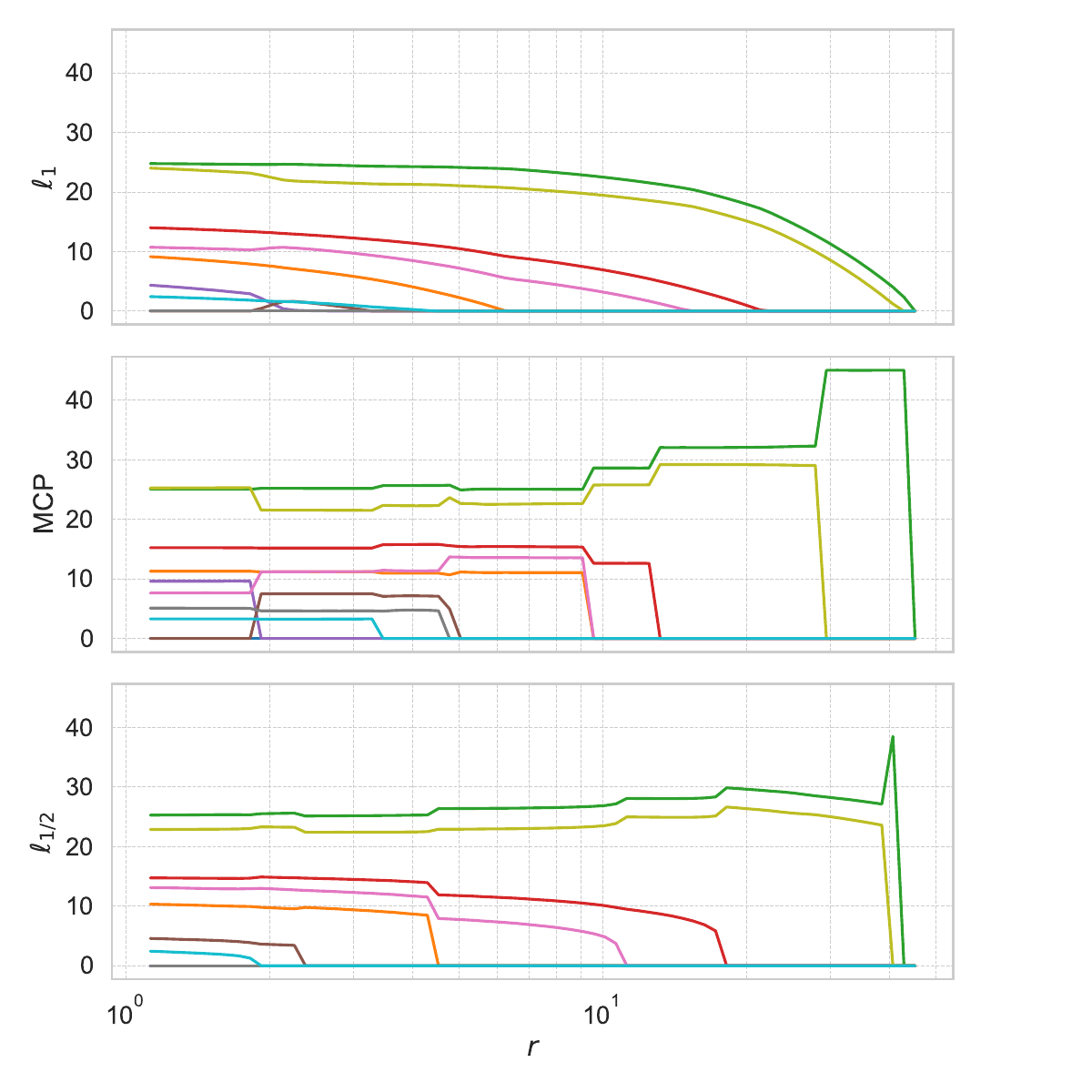}
        \label{fig:lasso_mcp_diabetes}} %
    \subfloat[Sorted penalties]{\includegraphics[width=0.48\linewidth, trim={16 10 45 30},clip]{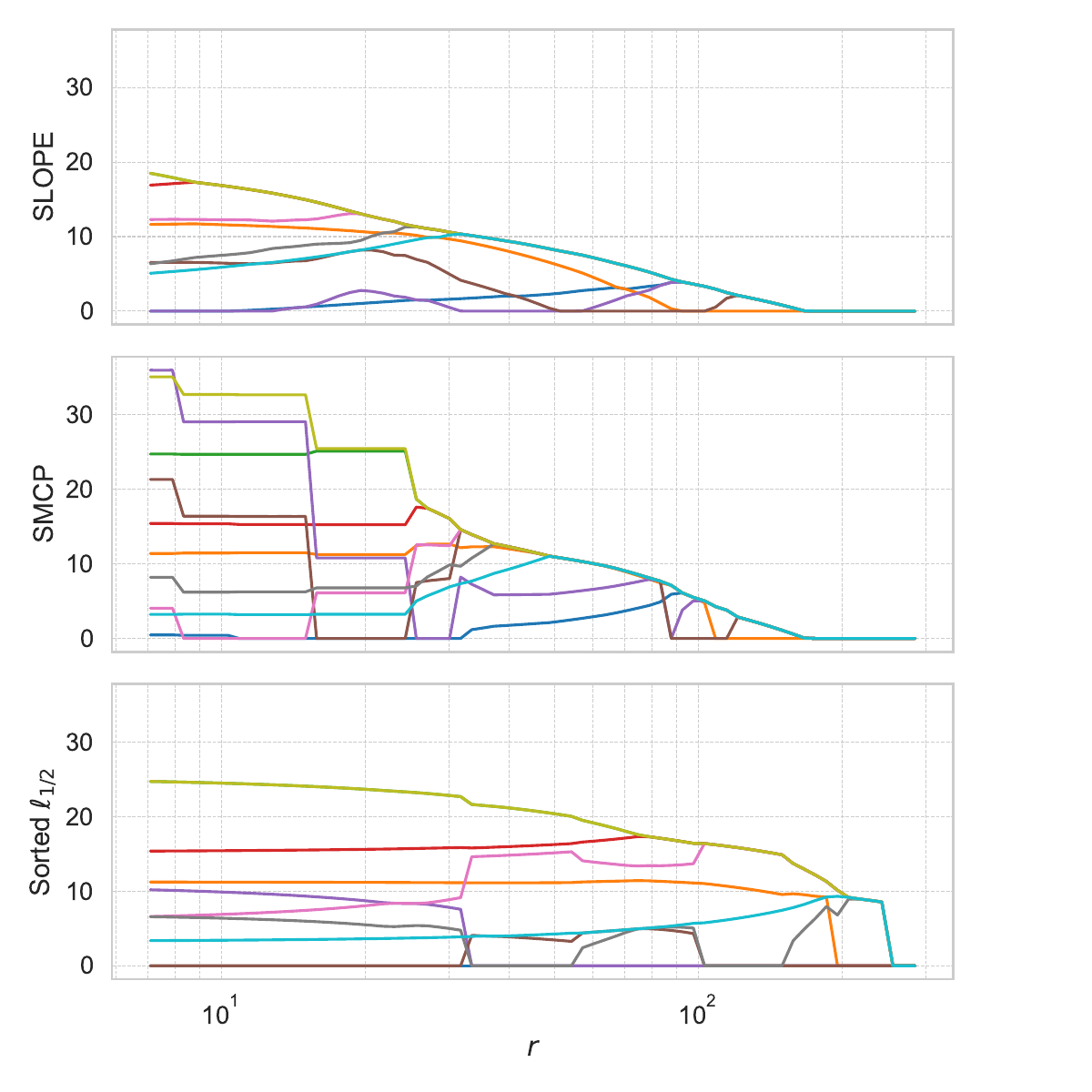}
      \label{fig:slope_diabetes}}
    \caption{Solution paths for convex, nonconvex, sorted and non sorted penalties.}
    \label{fig:sol_paths}
\end{figure*}

\section{Conclusion}

Sorted nonconvex penalties generalize the Sorted $\ell_1$ norm penalty: they benefit from both the sparsity-enhancing and clustering properties of SLOPE but promote solutions with less amplitude bias.
We have proposed a unified approach to deal with regularized regression problems using such penalties by providing efficient algorithms to compute, exactly or approximately, their proximal operator, which paves the way for a more general use in practical cases.

\appendices

\section{Details on Clustering Properties of Sorted Nonconvex Penalties}
\label{app:details_clustering}
For every solution $\rvx^*$ of $\min_\rvx g(\rvx) + \Psi(\rvx)$, there exists a value of $\tau >0$ such that $\rvx^*$ also solves
\begin{align}
    & \min g(\rvx)  \quad  \text{s.t. } \Psi(\rvx) \leq \tau \, .
\end{align}
Hence, solving the the regularized problem can be seen as finding the smallest sublevel set of $g$ that intersect the ball $\{\rvx : \varPsi(\rvx) \leq \tau \}$, the solution being the corresponding intersection point.
To get intuition on how the clustering operates using sorted penalties, \Cref{fig:balls_slope} displays the projection on the unit ball for various choices of penalties:
\begin{itemize}
    \item Points of non-differentiability of the penalty attract the solution towards corners of the unit ball.
    They enforce sparsity for the $\ell_1$ ball and both sparsity and clustering for the SLOPE and sorted $\ell_{1/2}$ unit balls.
    \item For the $\ell_{1/2}$ unit ball, due to the nonconvexity of the ball, the corners associated with clustering are more or less attractive depending on the choice of the regularization sequence $(\lambda_i)_i$.
\end{itemize}
We also plot the level lines of sorted MCP.
We observe that for $\tau$ large enough in the constrained problem, the sublevel set is the whole space  (the penalty saturates), the projection is the identity and there is no longer any clustering. Yet, for smaller $\tau$, we still recover some corners, which are associated with clustering.

\begin{figure}[t]
\centering
	\begin{tikzpicture}
		\node (A) at (0,0) {\includegraphics[width=0.32\linewidth,trim={0.9cm 0.7cm 1cm 1.2cm}, clip]{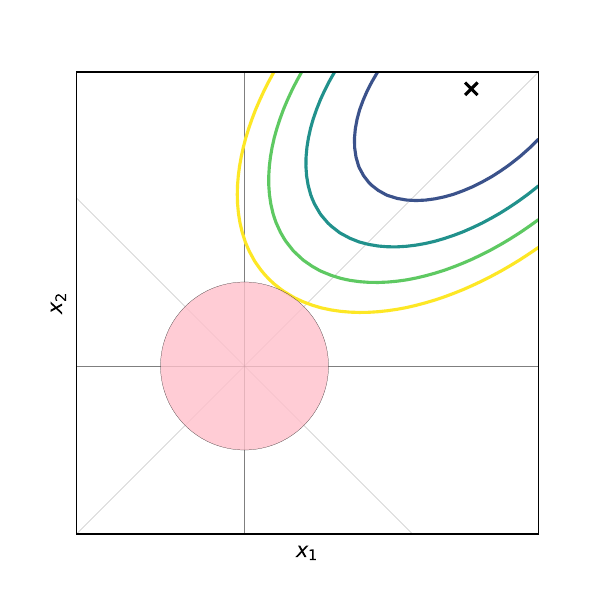}
        \label{fig:l2_ball}};
        \node[anchor=north] at (A.south) {$\ell_2$};
        \node (B) at (0.33\linewidth,0) {\includegraphics[width=0.32\linewidth,trim={0.9cm 0.7cm 1cm 1.2cm}, clip]{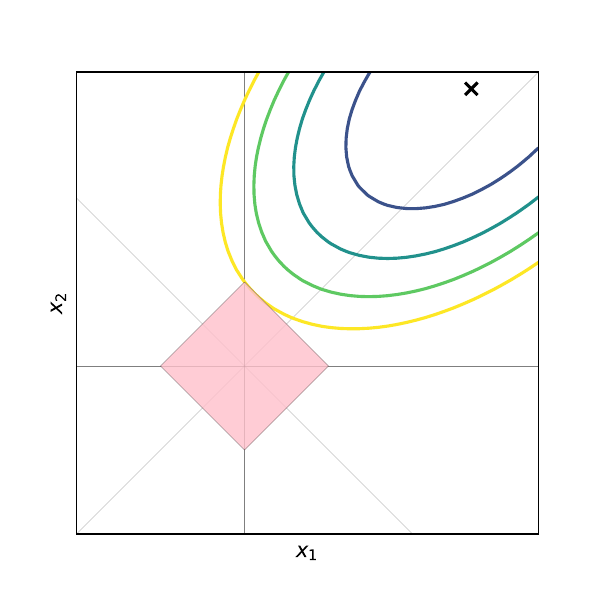}
        \label{fig:l2_ball}};
        \node[anchor=north] at (B.south) {$\ell_1$};
        \node (C) at (0.66\linewidth,0) {\includegraphics[width=0.32\linewidth,trim={0.9cm 0.7cm 1cm 1.2cm}, clip]{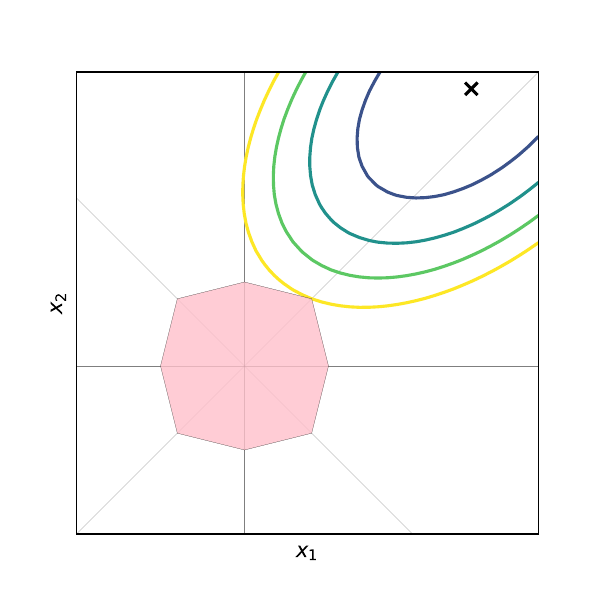}
        \label{fig:l2_ball}};
        \node[anchor=north] at (C.south) {SLOPE};
        \node (D) at (0,-0.43\linewidth) {\includegraphics[width=0.32\linewidth,trim={0.9cm 0.7cm 1cm 1.2cm}, clip]{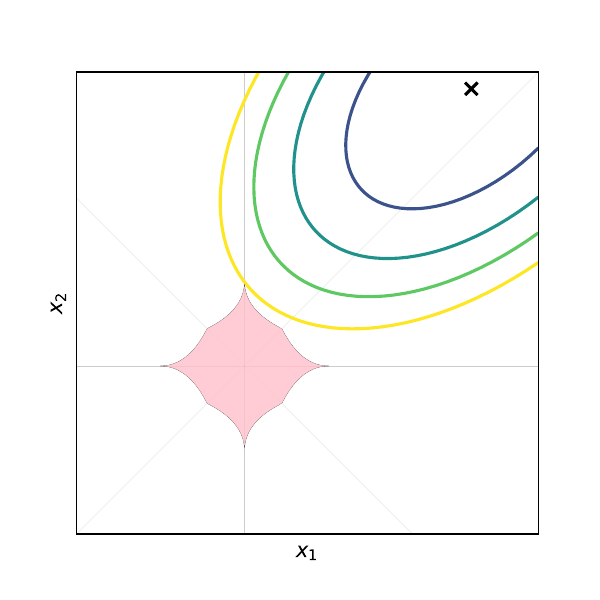}
        \label{fig:l2_ball}};
        \node[anchor=north, align=left] at (D.south) {Sort. $\ell_{1/2}$, \\ $\lambda_2 = \frac{\lambda_1}{2}$};
        \node (E) at (0.33\linewidth,-0.43\linewidth) {\includegraphics[width=0.32\linewidth,trim={0.9cm 0.7cm 1cm 1.2cm}, clip]{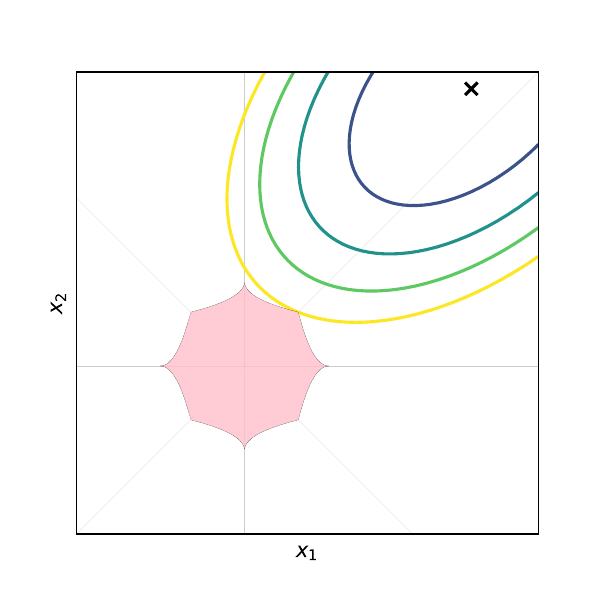}
        \label{fig:l2_ball}};
        \node[anchor=north, align=left] at (E.south) {Sort. $\ell_{1/2}$,  \\ $\lambda_2 = \frac{\lambda_1}{4}$};
        \node (F) at (0.66\linewidth,-0.43\linewidth) {\includegraphics[width=0.32\linewidth,height=0.32\linewidth,trim={0.9cm 0.7cm 1cm 1.2cm}, clip]{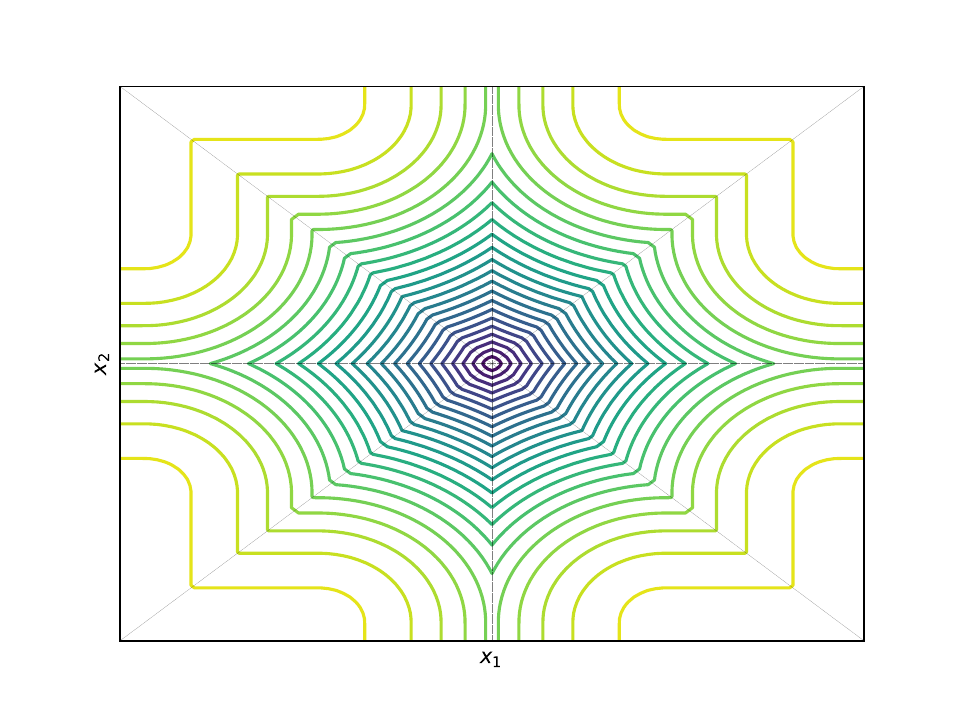}
        \label{fig:l2_ball}};
        \node[anchor=north, align=left] at (F.south) {S-MCP,   $\gamma = 1.4$ \\ $\lambda_1 = 3, \lambda_2 = 2.5$};
	\end{tikzpicture}
    \caption{
    Sparse regression in its constrained form.
    We display the level lines of the least squares datafit and the sublevel set of various penalties (red).
    \emph{Sparsity}: the non-differentiability of $\ell_1$ attracts the solution towards the corners of the ball.
    \emph{Structure}: SLOPE also attracts solutions towards points with equal coefficients. These corners are also present in sorted $\ell_{1/2}$ and their attractivity depends on the choice of the regularization sequence.}
    \label{fig:balls_slope}
\end{figure}

\section{Pool Adjacent Violators algorithm}
\label{appendix:pava}

The PAV algorithm is recalled in \Cref{algo:pava_weak_nonneg}.
In the following, a block is a set of consecutive indices of $\intervalle{1}{p}$ on which the components of the iterate $\rvx$ are equal.
At each iteration of the algorithm, a block $B_0 := \intervalle{q}{r}$ is taken as the \emph{working block}.
Likewise, $B_- := \intervalle{...}{q-1}$ and $B_+ := \intervalle{r + 1}{\dots}$ respectively denote the blocks before and after $B_0$, potentially empty.
We denote $x_B$ as the common value of the components of $\rvx$ on the block $B$.
We introduce two functions that operate on blocks: $\mathrm{Pred}(B)$ (resp. $\mathrm{Succ}(B)$) returns the block just before (resp. after) the block $B$ if it exists, otherwise, it returns the empty set.

\begin{algorithm}
    \caption{PAV Algorithm}\label{algo:pava_weak_nonneg}
    \begin{algorithmic}
        \STATE \textbf{Input:} data $\rvy \in \sR^p$
        \STATE $\rvx \gets \left( \chi(\{1\}),\dots,\chi(\{p\}) \right)$ \COMMENT{Initialize with unconstrained solution}
        \STATE $B_- \gets \emptyset$
        \STATE $B_0$ is the block such that $1 \in B_0$, and $B_+ \gets \mathrm{Succ} (B_0)$
        \WHILE{$B_+ \neq \emptyset$}
            \IF{$x_{B_0} \geq x_{B_+}$}
                \STATE $B_0 \gets B_+$, $ B_- \gets B_0 $, $B_+  \gets \mathrm{Succ}(B_+)$ \COMMENT{If blocks are ordered, move forward}
            \ELSE
                \STATE Update $\rvx$ on $B_0 \cup B_+$: $x_{B_0 \cup B_+} \gets \chi(B_0 \cup B_+)$ \COMMENT{If not, pooling operation}
                \STATE $B_0 \gets B_0\cup B_+$, $B_+ \gets \mathrm{Succ}(B_0)$
                \WHILE{$( x_{B_-} \leq  x_{B_0} ) \land ( B_- \neq \emptyset )$}
                    \STATE Update $\rvx$ on $B_- \cup B_0$: $x_{B_- \cup B_0} \gets \chi(B_- \cup B_0)$ \COMMENT{Backward pass to ensure correct ordering after pooling}
                    \STATE $B_0 \gets B_- \cup B_0$, $B_- \gets \mathrm{Pred}(B_-)$
                \ENDWHILE
            \ENDIF
        \ENDWHILE
        \RETURN $(\rvx)_+$
    \end{algorithmic}
\end{algorithm}
\section{Proof of \Cref{theorem:ell_q_local}}
\label{proof:min_local_scalar}
\begin{proof}
    We prove each item independently.
    \begin{itemize}
    	\item \textit{Proof of \Cref{prop:variations_item_1}.}
    Define $m(\lambda)$ as
    \begin{equation}\label{eq:m_lamb}
        m(\lambda) := \inf \left\{ z \in \sR_{+*} \setminus Z_\psi, \psi_0''(z) \geq \frac{-1}{\lambda} \right\} .
    \end{equation}
    Since $\psi''$ is increasing, for all $z \in \sR_{+*} \setminus Z_\psi$ such that $z < m(\lambda)$, $F''(z) < 0$.
    Similarly, for all $z \in \sR_{+*} \setminus Z_\psi$ such that $z \geq m(\lambda)$, $F''(z) \geq0$.
    Hence, on each segment where $\psi''$ is continuous, $F$ is strictly concave for $z < m(\lambda)$ and convex for $z \geq m(\lambda)$.
    Moreover, as $\psi$ is continuously differentiable on $\sR_{+*}$ the result extends for all $z \in \sR_{+*}$.
    Finally, by continuity of $F$, the property holds on $\sR_+$.

    \item \textit{Proof of \Cref{prop:variations_item_2}.} The nonzero minimizer, if it exists, must necessarily lie in $[m(\lambda), +\infty[$, where  $F$ is convex.
    It exists if $F'$ has a root on $[m(\lambda), +\infty[$, \emph{i.e.} if $F'(m(\lambda))<0$ which is equivalent to $y >m(\lambda)+  \lambda \psi_0'(m(\lambda)) =: \tau(\lambda)$. In addition $0$ is a local minimizer of $F$ on $\R_+$ if $\lim_{z \to 0} F'(z) \geq 0$, that is $y \leq \lambda \psi_0'(0)$.
 Then,   $\rho^+$ is continuous with respect to $y$ from the bijection of $z \mapsto z + \lambda \psi'$ which is continuous and strictly monotone on $[m(\lambda), +\infty)$.
    Then, we show that $\rho^+$ is non-decreasing with $y$.
    Indeed, consider $y_1 > y_2 >0 $ and $\lambda>0$.
    Assume, by contradiction, $\rho^+(y_2;\lambda) > \rho^+(y_1;\lambda)$ (note that both are greater than $\tau(\lambda)$ by definition, hence than $m(\lambda)$).
    Then, as $F'_{y_2} : z \mapsto z-y_2+ \lambda \psi_0'(z)$ is increasing on $[m(\lambda), +\infty[$, and as $F'_{y_2}(\rho^+(y_2;\lambda)) = 0$, it holds $F'_{y_2}(\rho^+(y_1;\lambda)) < 0$.
    Yet, by definition of $\rho^+$, it holds $F'_{y_1}(\rho^+(y_1;\lambda)) = 0$, which leads to the following contradiction
    \begin{align*}
        0 & = \rho^+(y_1;\lambda) -y_1 +  \lambda \psi_0'(\rho^+(y_1 ; \lambda)) \\
        & < \rho^+(y_1;\lambda) -y_2 +  \lambda \psi_0'(\rho^+(y_1;\lambda)) \\
        & = F'_{y_2}(\rho^+(y_1;\lambda)) < 0\ .
    \end{align*}

    Next, we show that $\rho^+$ is non-increasing with $\lambda$.
    Indeed, consider $\lambda_1 > \lambda_2 >0 $ and $y>0$.
    Again, assume by contradiction $\rho^+(y;\lambda_1) > \rho^+(y;\lambda_2)$.
    Then, as $F'_{\lambda_2} : z \mapsto z-y+ \lambda_2 \psi_0'(z)$ is increasing on $[m(\lambda_2), +\infty[$ and as $F'_{\lambda_2}(\rho^+(y;\lambda_2)) = 0$, it holds $F'_{\lambda_2}(\rho^+(y;\lambda_1)  ) >0$.
    As previously, we obtain the following contradiction
    \begin{align*}
        0 & = \rho^+(y;\lambda_1) -y +  \lambda_1 \psi_0'(\rho^+(y;\lambda_1)) \\
        & >  \rho^+(y;\lambda_1) -y +  \lambda_2 \psi_0'(\rho^+(y;\lambda_1)) \\
        & = F'_{\lambda_2}(\rho^+(y;\lambda_1)  ) > 0 \ ,
    \end{align*}
    as $\psi_0' >0$.
    \item \textit{Proof of \Cref{prop:variations_item_3}.} Since the prox is a monotone operator, if $\{0, \rho^+(y;\lambda) \} \in \prox_{\lambda \psi_0} (y)$ for some $y \in \sR_+$ then, for all $\tilde{y} < y$, $\prox_{\lambda \psi_0} (\tilde{y}) = \{0\}$ and for all $\tilde y > y$,  $\prox_{\lambda \psi_0} (\tilde y) =  \{ \rho^+(\tilde y; \lambda) \}$.
    Hence to prove the existence of the threshold $T(\lambda)$, it is sufficient to exhibit two values $y_0 < y_+$ such that $\prox_{\lambda \psi_0}(y_0) = \{ 0 \}$ and $\prox_{\lambda \psi_0}(y_+) = \{ \rho^+(y_+;\lambda) \}$. The first one, $y_0$, is trivial from \Cref{prop:variations_item_2} where we get that $0$ is the unique local (and thus the global) minimizer of $F_y$ for all $y < \tau(\lambda)$. Regarding $y_+$, if it exists, it should be such that $y_+ \geq \tau(\lambda) \geq m(\lambda)$. It thus belongs to the convex region of $F_{y_+}$ (\Cref{prop:variations_item_1}), and by definition of $\rho^+$ (minimizer of the convex region) we have $F_{y_+}(y_+) \geq F_{y_+}(\rho^+(y_+;\lambda))$. To complete the proof we will  show that $F_{y_+}(0) > F_{y_+}(y_+)$, or equivalently $0 > -\frac12 y_+^2 + \lambda \psi_0'(y_+) := G(y_+)$. Since $\psi_0$ is concave and continuously differentiable on $\sR_{+*}$, it lies below its tangents, thus its growth is at most linear.  Hence, $G(y) \to_{y \to +\infty}  -\infty$ which proves the existence of such an $y_+$. Finally the fact that $T(\lambda) \in [\tau(\lambda,\lambda\psi_0'(0)]$ is direct from \Cref{prop:variations_item_2}.

    \qedhere
    \end{itemize}
\end{proof}
\section{MM experiment}
\label{appendix:mm}

For the experimental setup of \Cref{fig:mm}, the matrix $\rmA$ is in $\sR^{150 \times 50}$, it is random Gaussian with Toeplitz-structure covariance equal to $(0.4^{|i-j|})_{i,j}$.
We denote $n$ the number of samples and $d$ the number of features.
The ground truth $\rvx^\star$ has a sparsity of $60 \%$ (i.e. only $40 \%$ of non-zero entries) and is made of two clusters of equal coefficients, respectively equal to $+0.5$ and $-0.5$.
The penalty used is sorted MCP with parameter $\gamma$ equal to $1.1$ and linearly decreasing regularization strength: $\lambda_i = \alpha \times \frac{d-i}{d}$ for $i \in \llbracket 1, d \rrbracket$ where $\alpha$ is determined by grid-search.
We consider, using this synthetic dataset, two different settings:
\begin{enumerate}
    \item Regression: the target $\rvb$ is defined as $\rvb = \rmA \rvx^\star + \bm{\epsilon}$ where $\bm{\epsilon}$ is a vector of white Gaussian noise such that the signal-to-noise ratio is $10$.
    The datafit used is a \emph{least squares datafit} $g(\rvx) = \frac{1}{2} \norm{\rmA \rvx - \rvb}^2$.
    \item  Classification: The target $\rvb$ is equal to $\sign( \rmA \rvx^\star)$ except for $10 \%$ of the samples for which the sign of $b_i$ is flipped.
    The datafit used is a \emph{logistic regression datafit} $g(\rvx) = \frac{1}{n} \sum_{i=1}^n \log \left(1 + e^{-b_i(\rmA \rvx)_i } \right)$.
\end{enumerate}
The algorithms are stopped when the loss difference between two successive iterations is smaller than a tolerance fixed at $10^{-5}$.
We consider a MM algorithm with one step of prox-grad at each MM iteration (defined in \Cref{eqn:mm_iter}).

\section{Building Counter-Examples to D-PAV?}
\label{appendix:expe_limits}

\Cref{theorem:oscar_ell_q_global_nec} defines a subset of local minimizers which contains the global minimizer.
While the D-PAV algorithm explores this subset, there may exist some setting where the algorithm misses some candidates and fails to recover the global minimizer.
From the form of the solution, we know that we recover at least  the correct last block.
Yet, on the first part of the solution, one may be able to find other blocks structure by merging correctly ordered blocks (\Cref{theorem:ell_q_local-2}).
The following experiments show that the combination of $(\rvy, \bm{\lambda})$ such that this setting may occur are very rare.
Moreover, even by purposely constructing such adversarial examples, we cannot find a setting where D-PAV fails.
The following experiments are done for the penalty Sorted $\ell_{\frac 1 2}$.

\textit{D-PAV versus exhaustive search.}  To benchmark our D-PAV approach, we compare it with the bruteforce approach where we compute all the possible partitions of the vector into blocks. We take $\rvy \in \sR^{10}$: each point $y_i$ is such that $y_i = T(\lambda_i) + \epsilon$ with $\epsilon \sim \mathcal N(-0.3,1)$. We compute the value of the blocks using $\chi$. We test on $10$ different seeds for which D-PAV always recovers the solution.
 We also compare ourselves for $\rvy \in \sR^{100}$ with the Sequential Least Squares Programming implemented in the \texttt{scipy} library: we run on $100$ initial points and keep the better solution. Once again, D-PAV systematically beats the solution from \texttt{scipy}.

\textit{Adversarial settings.}
The D-PAV algorithm could fail in a setup where it does not merge sorted blocks $\chi(B_1) \geq \chi(B_2)$ while $\chi(B_1 \cup B_2)$ could have achieved a better objective value.
To explore the rate of occurrence of such a setup, we perform the following experiment.
We consider the $\ell_{1/2}$ penalty.
We fix a value for $\bar \lambda_{B_2} := 1$.
We test five values for $\bar y_{B_2} \in \{ 5,10,15,20,25 \} \times \tau(\bar \lambda_{B_2} )$.
We consider a ratio $ t$ such that $|B_2| = t |B_1\cup B_2|$ and we test $4$ values for $t$ in $[0.3,0.9]$.
We are looking for a set of points $(\bar y_{B_1}, \bar \lambda_{B_1})$ such that $\bar y_{B_1} \geq \bar y_{B_2}$, $\bar \lambda_{B_1} \geq \bar \lambda_{B_2}$ and $\chi(B_1) \geq \chi(B_2) \geq \chi(B_1 \cup B_2)$.
To do so, we explore a grid of $\lambda \in [50,200]$ and $y \in [0,60]$.
For each choice of $t$ and $\bar y_{B_2}$, \Cref{fig:zone_merging} plots points $(\bar y_{B_1}, \bar \lambda_{B_1})$ for which D-PAV could \emph{theorically fail}.

Next, for each choice of $\bar y_{B_2}$ and $t$, we randomly pick a problematic couple  $(\bar y_{B_1}, \bar \lambda_{B_1})$ (corresponding to a blue point in \Cref{fig:zone_merging}) and we build a \emph{candidate} counter-example in $\sR^{22}$ such that
\begin{align*}
    \mathbf y & = (\bar y_{B_1}, \underbrace{\bar y_{B_1} , \dots, \bar y_{B_1}}_{(1-t) \times 20} ,\underbrace{\bar y_{B_2} , \dots, \bar y_{B_2}}_{ t \times 20}, \frac{\bar y_{B_2} }{2}) , \\
    \mathbf \lambda & = (\bar \lambda_{B_1} + 0.1, \underbrace{\bar \lambda_{B_1} , \dots, \bar \lambda_{B_1}}_{(1-t) \times 20} ,\underbrace{\bar \lambda_{B_2} , \dots, \bar \lambda_{B_2}}_{ t \times 20}, 0).
\end{align*}
Our D-PAV algorithm systematically yields better solutions than the ones obtained with the merging described above.

\begin{figure}[t]
    \centering
    \includegraphics[width=\columnwidth]{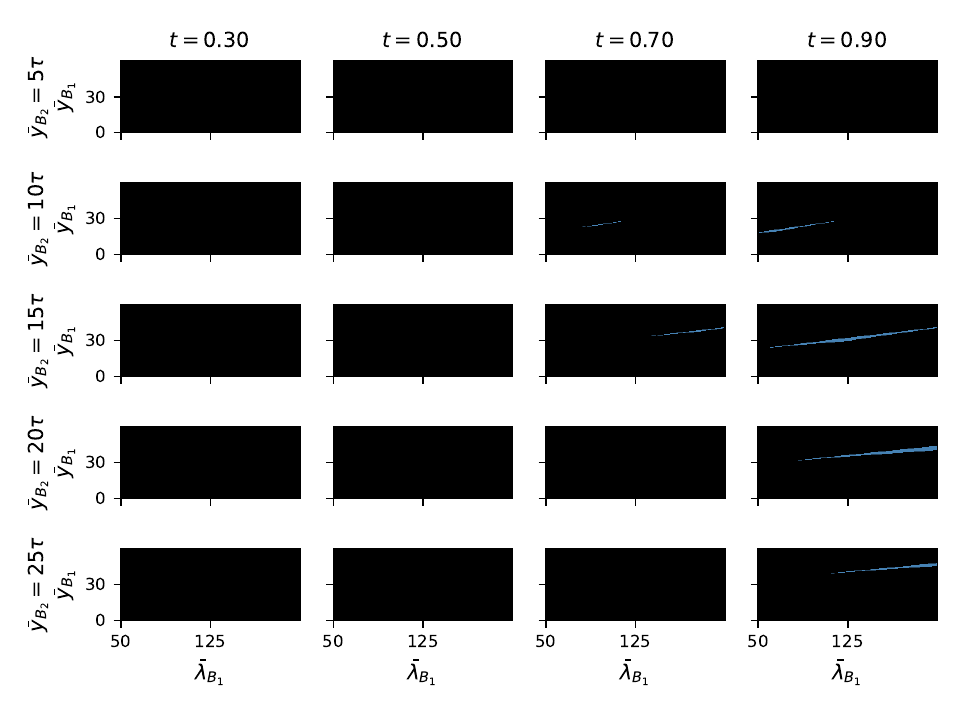}
    \caption{Blue points are points $(y_B, \lambda_B)$ such that $\chi(B_1) \geq \chi(B_2) \geq \chi(B)$. They are the only set of points where D-PAV could theoretically fail as it does not consider merging $B_1$ and $B_2$ when $\chi(B_1) \geq \chi(B_2)$.}
    \label{fig:zone_merging}
\end{figure}

\bibliographystyle{IEEEtran}
\bibliography{references.bib}

\vfill
\newpage
\setcounter{page}{1}
\setcounter{section}{0} %
\renewcommand{\appendixname}{Supplementary}
\onecolumn
\begin{center}
\huge \thetitle \\ Supplementary Material  \\[0.5cm]
\large Anne Gagneux, Mathurin Massias and Emmanuel Soubies \\
\end{center}

\begin{multicols}{2}
 \begin{figure}
\caption{tt}
\end{figure}
\section{Properties of Sorted Penalties}
\label{appendix:properties_sorted}

We recall several properties of proximal operators of sorted penalties, that will ease their computation, and give the proof of \Cref{prop:comput_prox,prop:prox_cone}.
Note that these properties have already been shown in several previous works \cite{zeng2014ordered,bogdan2015slope,feng2019sorted}. First, to ease the presentation of the following proofs,  let introduce the objective function of the sorted proximal operator
\begin{align}
 Q_{\eta \varPsi}(\rvx; \rvy) & := \frac{1}{2\eta} \|\rvy - \rvx\|_2^2 + \varPsi(\rvx) \\
 &  = \frac{1}{2\eta} \|\rvy - \rvx\|_2^2 + \sum_{i=1}^p \psi(|x_{(i)}|,\lambda_i),
\end{align}
which must not be confused with the non-sorted objective $P_{\eta \psi}$ defined in~\Cref{eq:unsort_prox_obj}.

\begin{proposition}[Sign of the proximal operator]
    \label{prop:sign}
    Let $\rvx \in \sR^p$ and $\rvp \in \prox_{\eta \varPsi}(\rvx)$.
    Then, for every $i \in \intervalle{1}{p} $:
    \begin{equation*}
        p_i x_i \geq 0 \ .
    \end{equation*}
\end{proposition}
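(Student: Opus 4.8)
The plan is to exploit the \emph{sign-invariance} of the sorted penalty $\varPsi$ together with a single-coordinate sign-flip argument. By \Cref{hyp:gen}, $\varPsi$ depends on its argument only through the multiset of magnitudes $\{|x_1|,\dots,|x_p|\}$, since $\varPsi(\rvx) = \sum_{i} \psi(|x_{(i)}|;\lambda_i)$. In particular, flipping the sign of any single coordinate of a vector leaves $\varPsi$ unchanged, because it does not alter the sorted sequence of magnitudes. This structural observation is the only ingredient of the proof.

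First I would argue by contradiction. Fix $\rvp \in \prox_{\eta \varPsi}(\rvx)$, that is, a global minimizer of $Q_{\eta \varPsi}(\cdot;\rvx)$, and suppose that $p_i x_i < 0$ for some index $i \in \intervalle{1}{p}$. Then both $x_i$ and $p_i$ are nonzero and of opposite sign. Define $\rvp'$ by flipping the $i$-th coordinate only: $p'_i := -p_i$ and $p'_j := p_j$ for $j \neq i$. Since $|\rvp'| = |\rvp|$ componentwise, the penalty is unchanged, $\varPsi(\rvp') = \varPsi(\rvp)$, and only the $i$-th term of the quadratic part differs.

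Next I would compare the two objective values directly:
\begin{equation*}
    Q_{\eta \varPsi}(\rvp;\rvx) - Q_{\eta \varPsi}(\rvp';\rvx)
    = \frac{1}{2\eta}\left[ (x_i - p_i)^2 - (x_i + p_i)^2 \right]
    = -\frac{2\, x_i p_i}{\eta} > 0,
\end{equation*}
where the strict inequality uses $x_i p_i < 0$ and $\eta > 0$. Thus $Q_{\eta \varPsi}(\rvp';\rvx) < Q_{\eta \varPsi}(\rvp;\rvx)$, contradicting the optimality of $\rvp$. Hence $p_i x_i \geq 0$ for every $i \in \intervalle{1}{p}$.

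There is no genuine obstacle here; the only point requiring mild care is that the claimed inequality is non-strict. If $x_i = 0$ or $p_i = 0$, then $p_i x_i = 0$, which already satisfies the conclusion, so the contradiction need only be derived when both factors are nonzero and of opposite sign — precisely the situation in which the sign-flip yields a \emph{strict} decrease of the quadratic term while preserving the penalty. The entire argument rests solely on the magnitude-only dependence of $\varPsi$ granted by \Cref{hyp:gen}.
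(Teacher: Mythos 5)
Your proof is correct and follows essentially the same route as the paper's: a single-coordinate sign-flip of the offending component, using the fact that $\varPsi$ depends only on magnitudes so the penalty term is unchanged while the quadratic term strictly decreases, contradicting optimality. The only cosmetic difference is that the paper first normalizes WLOG to $x_i > 0$, $p_i < 0$ and sets $\tilde p_i = |p_i|$, whereas you flip the sign directly and compute the difference of objectives explicitly.
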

\begin{proof}
    Suppose that there exists $i \in \llbracket 1, p \rrbracket $ such that $x_i p_i < 0$.
    Since for every $\rvx \in \sR^p$, $\varPsi(\rvx) = \varPsi(|\rvx|)$, we can assume $x_i > 0$ and $p_i < 0$ without loss of generality.
    It then follows that:
    \begin{equation*}\label{eq:proof_sign_prox-2}
        (x_i - p_i)^2 > (x_i- |p_i|)^2.
    \end{equation*}
    Let $\mathbf {\tilde p}$ be equal to $\rvp$ everywhere except at index $i$ where $\tilde p_i = |p_i|$.
    One has $\varPsi(\mathbf{\tilde p}) = \varPsi(\rvp)$ and:
    \begin{align*}
        Q_{\eta \varPsi} (\mathbf{\tilde p}, \rvx) & = \frac{1}{2} \sum_{j\neq i} (x_j - p_j)^2 + \frac{1}{2}(x_i - |p_i|)^2 + \eta \varPsi(\mathbf {\tilde p}) \\
       & < \frac{1}{2} \sum_{j\neq i} (x_j - p_j)^2 + \frac{1}{2}(x_i - p_i)^2 + \eta \varPsi(\mathbf{\tilde p}) \\
       & = \frac{1}{2} \sum_{j\neq i} (x_j - p_j)^2 + \frac{1}{2}(x_i - p_i)^2 + \eta \varPsi(\rvp)  \\
       &= Q_{\eta \varPsi} (\rvp, \rvx) \, ,
    \end{align*}
    which contradicts the optimality of $\rvp$.
\end{proof}
\begin{proposition}[Monotonicity of the proximal operator]
    \label{prop:mono}
    Let $\rvx \in \sR^p$ and $\rvp \in \prox_{\eta \varPsi}(\rvx)$. Then, the components of $\rvx$ and those of $\rvp$ are ordered in the same way: $\forall i, j \in \intervalle{1}{p}\ , x_i < x_j \implies p_i \leq p_j$.
\end{proposition}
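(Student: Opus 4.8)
The plan is to use an exchange (swapping) argument of the same flavor as the proof of \Cref{prop:sign}. The key structural fact is that the sorted penalty $\varPsi$ depends on $\rvp$ only through the sorted magnitudes $|p_{(1)}| \geq \dots \geq |p_{(p)}|$, and is therefore invariant under any permutation of the coordinates of $\rvp$. By contrast, the quadratic term $\frac{1}{2\eta}\norm{\rvx - \rvp}^2$ is position-sensitive: permuting two coordinates of $\rvp$ changes it unless they are already ordered consistently with $\rvx$. This asymmetry is exactly what will force $\rvp$ to share the ordering of $\rvx$.

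Concretely, I would argue by contradiction. Suppose there exist indices $i, j$ with $x_i < x_j$ but $p_i > p_j$. Define $\tilde{\rvp}$ to coincide with $\rvp$ everywhere except that its entries at positions $i$ and $j$ are swapped, i.e. $\tilde p_i = p_j$ and $\tilde p_j = p_i$. Since $\tilde{\rvp}$ is a permutation of $\rvp$, the magnitudes are unchanged and hence $\varPsi(\tilde{\rvp}) = \varPsi(\rvp)$. Only the coordinates $i$ and $j$ contribute to the change in the data-fidelity term, and a direct factorization of the difference of squares yields
\begin{equation*}
    \big[(x_i - p_i)^2 + (x_j - p_j)^2\big] - \big[(x_i - p_j)^2 + (x_j - p_i)^2\big] = 2(x_i - x_j)(p_j - p_i).
\end{equation*}
Under the assumption $x_i < x_j$ and $p_i > p_j$, both factors on the right-hand side are strictly negative, so their product is strictly positive. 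Thus the swap strictly decreases $\frac{1}{2\eta}\norm{\rvx - \cdot}^2$ while leaving $\varPsi$ unchanged, giving $Q_{\eta \varPsi}(\tilde{\rvp}; \rvx) < Q_{\eta \varPsi}(\rvp; \rvx)$, which contradicts $\rvp \in \prox_{\eta \varPsi}(\rvx)$. Hence $p_i \leq p_j$ whenever $x_i < x_j$, which is the claim.

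I do not expect a genuine obstacle here: the only two points requiring care are the permutation-invariance of $\varPsi$ (immediate from \Cref{hyp:gen}, since the penalty only reads sorted magnitudes) and the bookkeeping of the cross terms in the expansion, which is the one-line identity displayed above. Note that, unlike \Cref{prop:sign}, no appeal to the signs of $x_i, x_j$ or $p_i, p_j$ is needed; the argument depends purely on the relative positions of the coordinates. One subtlety worth flagging is that the conclusion is the non-strict inequality $p_i \le p_j$ rather than $p_i < p_j$, which is consistent with the clustering behavior of sorted penalties, where distinct input values may be mapped to a common output value.
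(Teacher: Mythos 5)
Your proof is correct and is essentially identical to the paper's own argument: both proceed by contradiction via a swap of the two offending coordinates, use permutation invariance of $\varPsi$, and show the quadratic term strictly decreases, contradicting optimality. The only cosmetic difference is that you factor the difference of squares into $2(x_i - x_j)(p_j - p_i)$ in one step, whereas the paper expands the squares and compares cross terms; the substance is the same.
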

\begin{proof}
    By contradiction, let $i, j \in \llbracket 1, p \rrbracket$ such that $x_i < x_j $ and $p_i > p_j$.
    Then:
    \begin{equation*}
        \label{ineq:rearrang1}
        (x_i- p_i)^2 + (x_j- p_j)^2 = x_i^2 + x_j^2 + p_i^2 + p_j^2 - 2 x_i p_i - 2 x_j p_j.
    \end{equation*}
    Yet by assumption $(x_i - x_j)(p_j - p_i) > 0$, and so $- x_i p_i - x_j p_j > - x_j p_i - x_i p_j$.
    It follows:
    \begin{align*}
        (x_i- p_i)^2 + (x_j- p_j)^2 & > x_i^2 + x_j^2 + p_i^2 + p_j^2 - 2 x_j p_i - 2 x_i p_j \\
        & = (x_j - p_i)^2 + (x_i - p_j)^2.
    \end{align*}
    Let $\mathbf{\tilde p}$ be equal to $\mathbf{p}$, except for $\tilde p_i = p_j$ and $\tilde p_j = p_i$.
    As $\varPsi$ is a sorted penalty, $\varPsi(\rvp) = \varPsi(\mathbf{\tilde p})$.
    Therefore,
    \begin{align*}
        & Q_{\eta \varPsi} (\mathbf{\tilde p}, \rvx) \\
        & = \frac{1}{2} \sum_{k \neq i,j} (x_k - p_k)^2 +  \frac{1}{2}(x_j - p_i)^2 +  \frac{1}{2}(x_i - p_j)^2  + \eta \varPsi(\mathbf {\tilde p}) \\
       & < \frac{1}{2} \sum_{k \neq i,j} (x_k - p_k)^2 + \frac{1}{2}(x_i - p_i)^2 + \frac{1}{2}(x_j - p_j)^2  + \eta \varPsi(\rvp) \\
       &= Q_{\eta \varPsi} (\rvp, \rvx) \, ,
    \end{align*}
    which contradicts the optimality of $\rvp$.
\end{proof}
\begin{proof}[Proof of \Cref{prop:comput_prox}]

    For any $\rvy \in \sR^p$, we know from \Cref{prop:sign} that $\rvy$ and $\prox_{\eta \varPsi}(\rvy)$ share the same sign element-wise.
    As flipping the sign of an element of $\rvy$ does not change the value of $ \varPsi( \rvy )$, it follows easily that
    $\prox_{\eta \varPsi}(\rvy) = \sign(\rvy) \odot \prox_{\eta \varPsi}(|\rvy|)$.
    It remains to show that for $\rvy$ non-negative:
    \begin{equation}\label{eq:prox_sorted_positive}
        \prox_{\eta \varPsi}(\rvy) =  \mathbf P_{|\rvy|}^\top \prox_{\eta \varPsi}(\mathbf P_{|\rvy|} \rvy) \, .
    \end{equation}
    For any permutation matrix $\mathbf Q$ and any $\rvx, \rvy \in \sR^p$, $\Vert \mathbf Q(\rvy-\rvx) \Vert^2 = \Vert \rvy-\rvx \Vert^2$.
    Moreover, by definition of the sorted penalty $\varPsi$, we have $\varPsi(\mathbf P_{|\rvy|} \rvy) = \varPsi(\rvy)$.
    Thus for any $\rvx, \rvy \in \sR^p$:
    \begin{equation*}
        \varPsi(\mathbf P_{|\rvy|} \rvy) + \frac{1}{2} \Vert \mathbf P_{|\rvy|}(\rvy-  \rvx) \Vert^2 = \varPsi(\rvy) + \frac{1}{2} \Vert \rvy- \rvx \Vert^2 \ ,
    \end{equation*}
    from which we deduce \Cref{eq:prox_sorted_positive}, by change of variable $\rvy' = \mathbf P_{|\rvy|} \rvy$.
\end{proof}
\begin{proof}[Proof of \Cref{prop:prox_cone}]
    First, let $\rvx^* \in \prox_{\eta \varPsi}(\rvy)$.
    From \Cref{prop:sign,prop:mono}, because $\rvy$ is sorted and has positive entries, so is $\rvx^*$: $\rvx^* = |\rvx^*|_\downarrow$.
    Thus,
    \begin{equation*}
        \prox_{\eta \varPsi}(\rvy) = \argmin_{\rvx \in \gK_p^+} Q_{\eta \varPsi}(\rvx, \rvy).
    \end{equation*}
    Then, the fact that for $\rvx  \in \gK_p^+$, $Q_{\eta \varPsi}(\rvx, \rvy) = P_{\eta \varPsi}(\rvx, \rvy)$ completes the proof.
\end{proof}
\section{Proofs} \label{appendix:proofs}

We first recall some important notations.
If we consider a block of consecutive indices $B = \intervalle{q}{r}$, We define as $F_B$ or $F_{q:r}$ the prox objective function on a block of successive indices $B$.
The prox objective function is defined on $\sR_+^p$ or on $\sR_+$, depending if it is evaluated at a vector or scalar point:
\begin{align*}
    F_B(\mathbf z) & = \sum_{i\in B} \frac12 ( y_i - z_i)^2 +    \lambda_i \psi_0(z_i)\ , \\
    F_B(z) & =   \sum_{i\in B} \frac12 ( y_i - z)^2 +    \lambda_i \psi_0(z)\ .
\end{align*}
In similar fashion, we denote $f_B$ or $f_{q:r}$ the derivative of $F_B$ on $\sR_{+*}^p$ or on $\sR_{+*}$.
In order to prove \Cref{theorem:ell_q_local}, we need the following lemma which shows that by merging two blocks that are not sorted in the correct order, the new value on the merged block is in between the two previous ones.

\begin{lemma}
    \label{lemma:merge}
    Let $B_1$ and $B_2$ be two consecutive blocks of indices. Then, if $\chi(B_1) < \chi(B_2)$, the merged block satisfies the following inequality.
    \begin{equation*}
        \chi(B_1) \leq \chi(B_1 \cup B_2) \leq \chi(B_2) \ .
    \end{equation*}
\end{lemma}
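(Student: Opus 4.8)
The plan is to reduce everything to the first-order information of the scalar block objectives, exploiting a convex-combination structure of their derivatives. Writing $\phi_B(z) := z - \bar y_B + \bar\lambda_B\,\psi_0'(z)$ for $z \in \sR_{+*}$, one has $F_B'(z) = |B|\,\phi_B(z)$, and since $F_{B_1\cup B_2} = F_{B_1}+F_{B_2}$ the merged averages $\bar y$ and $\bar\lambda$ are convex combinations of those of $B_1,B_2$; hence, pointwise on $\sR_{+*}$,
\begin{equation*}
    \phi_{B_1\cup B_2} = \theta\,\phi_{B_1} + (1-\theta)\,\phi_{B_2}, \qquad \theta := \frac{|B_1|}{|B_1\cup B_2|} \in (0,1).
\end{equation*}
This identity is the structural fact driving the argument. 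From \Cref{prop:variation_scalar}, each $\phi_{B_i}$ is strictly decreasing on $(0,m(\bar\lambda_{B_i}))$ and increasing on $(m(\bar\lambda_{B_i}),+\infty)$ — a single ``valley'' at $m(\bar\lambda_{B_i})$ — with $\chi(B_i)$ equal to its largest zero when $\chi(B_i)>0$, and $\chi(B_i)=0$ precisely when $\phi_{B_i}>0$ on $\sR_{+*}$.

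The one auxiliary ingredient I would record first is that $m(\cdot)$ is non-decreasing: since $\psi_0''$ is increasing (\Cref{hyp:nonconvex}), a larger $\lambda$ pushes the sign change of $z+\lambda\psi_0''$ to the right. Because $B_1$ precedes $B_2$, the sorting of $(\lambda_i)_i$ forces $\bar\lambda_{B_1}\ge\bar\lambda_{B_2}$, so $m(\bar\lambda_{B_1})\ge m(\bar\lambda_{B_1\cup B_2})\ge m(\bar\lambda_{B_2})$. This nesting of the valleys is what rules out the pathological configurations, and I expect it to be the point one must not forget.

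For the upper bound $\chi(B_1\cup B_2)\le\chi(B_2)$, set $c_2:=\chi(B_2)>0$ (note $c_2>\chi(B_1)\ge 0$). For every $z>c_2$ I would argue $\phi_{B_2}(z)>0$ (past its largest zero, on its increasing branch) and $\phi_{B_1}(z)\ge 0$ (either $\chi(B_1)<c_2$ and $z$ is past that zero, or $\chi(B_1)=0$ and $\phi_{B_1}>0$ everywhere); the convex-combination identity then gives $\phi_{B_1\cup B_2}>0$ on $(c_2,+\infty)$, so $F_{B_1\cup B_2}$ has no critical point there and its largest local minimizer is $\le c_2$. For the lower bound $\chi(B_1)\le\chi(B_1\cup B_2)$, the case $\chi(B_1)=0$ is immediate, so take $c_1:=\chi(B_1)>0$. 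Monotonicity of $m$ gives $c_1\ge m(\bar\lambda_{B_1})\ge m(\bar\lambda_{B_1\cup B_2})\ge m(\bar\lambda_{B_2})$, placing $c_1$ in the increasing branch of all three functions. Evaluating at $c_1$: $\phi_{B_1}(c_1)=0$, while $\phi_{B_2}(c_1)<0$ since $m(\bar\lambda_{B_2})\le c_1<c_2$ lies strictly between the valley and the largest zero of $\phi_{B_2}$; hence $\phi_{B_1\cup B_2}(c_1)<0$. Combined with $\phi_{B_1\cup B_2}>0$ beyond $c_2$ and the valley shape, the largest zero of $\phi_{B_1\cup B_2}$, which is $\chi(B_1\cup B_2)$, lies in $(c_1,c_2)$, proving the claim.

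The hard part will be exactly where a naive attempt collapses: \emph{without} the sorting-induced inequality $\bar\lambda_{B_1}\ge\bar\lambda_{B_2}$, the point $c_1$ could fall below $m(\bar\lambda_{B_2})$, where $\phi_{B_2}(c_1)$ may be positive and the merged minimizer can drop to $0$. The delicate steps are therefore to justify $c_1\ge m(\bar\lambda_{B_2})$ and the strict negativity of the valley of $\phi_{B_2}$ — the latter following from $c_2>m(\bar\lambda_{B_2})$, which must hold whenever $0<c_1<c_2$, since otherwise $c_1<c_2=m(\bar\lambda_{B_2})\le m(\bar\lambda_{B_1})\le c_1$ is contradictory. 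Everything else is routine sign bookkeeping built on \Cref{prop:variation_scalar}.
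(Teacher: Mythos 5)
Your proof is correct and takes essentially the same route as the paper's: both exploit that the derivative of the merged objective is a positively weighted combination of the block derivatives ($f_{B_1\cup B_2}=f_{B_1}+f_{B_2}$ in the paper, your normalized $\phi_{B_1\cup B_2}=\theta\phi_{B_1}+(1-\theta)\phi_{B_2}$), evaluate its sign at $\chi(B_1)$ and $\chi(B_2)$ using the valley shape from \Cref{prop:variation_scalar} together with the monotonicity of $m(\cdot)$ and the sorting-induced inequality $\bar\lambda_{B_1}\ge\bar\lambda_{B_2}$, and conclude that the largest zero of the merged derivative is squeezed between the two. If anything, your handling of the edge cases (why $c_2>m(\bar\lambda_{B_2})$, and why the merged $\chi$ must be the nonzero minimizer when $\phi_{B_1\cup B_2}(c_1)<0$) is slightly more explicit than the paper's.
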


\begin{proof}
    We denote $B := B_1 \cup B_2$ the merged block.
    We first study the case where $\chi(B_1) >0$, implying $\chi(B_1) = \rho^+(\bar y_{B_1},  \bar \lambda_{B_1})$.
    It holds
    \begin{align}
        f_B(\chi(B_1)) & = f_{B_1}(\chi(B_1)) + f_{B_2} (\chi(B_1)) \\
        & =  f_{B_2} (\chi(B_1)) < 0 ,
    \end{align}
    where the last inequality comes from $\chi(B_2) > \chi(B_1) \geq m( \bar \lambda_{B_1})> m ( \bar \lambda_{B_2})$.
    Likewise,
    \begin{align}
        f_B(\chi(B_2)) & = f_{B_1}(\chi(B_2)) + f_{B_2} (\chi(B_2)) \\
        & =  f_{B_1} (\chi(B_2)) >0
    \end{align}
    where the last inequality comes from $\chi(B_2) > \chi(B_1)$.
    So, the root of $f_{B}$ lies in $[\chi(B_1), \chi(B_2)]$.

    Now, in the case where $\chi(B_1) = 0$, it implies $F_{B_1}$ is increasing on $\sR_+$, so $f_{B_1}$ is always positive.
    If, $\chi(B) = 0$, the result is trivial, otherwise it holds
    \begin{align}
        0 &= f_{B}(\chi(B)) \\
        & = f_{B_1}(\chi(B)) + f_{B_2}(\chi(B)) \\
        & > f_{B_2}(\chi(B)).
    \end{align}
    So, $\chi(B) \in [m(\lambda_{B_2}), \chi(B_2)]$ and  we recover the result.
  \end{proof}

\begin{proof}[Proof of \Cref{theorem:ell_q_local}]
    \textbf{Necessary conditions}
    Let $\rvu$ be a local minimizer of $(P_p)$, \textit{i.e.} $\rvu \in \gL_p$ and let  $B = \intervalle{q}{r}$ be a maximal block of indices on which $\rvu$ is constant equal to  $\tilde u$, i.e. $u_{q-1}> u_q = \dots = u_r = \tilde u > u_{r+1} $.
    We demonstrate the two points of the theorem:
    \begin{enumerate}[label=(\roman*)]
        \item %
        If $\tilde u \not \in \{ \chi(B), 0\}$, then $\tilde u$ is not a local minimizer of $F_B$ (\Cref{prop:variation_scalar}).
        One can thus either infinitesimally increase or decrease $\tilde u$ so as to decrease $F_B$ while letting $\rvu$ feasible.
        Then, the total objective value $F_{1:p}(\rvu)= F_{1:q-1}(\rvu_{1:q-1}) + F_B (\tilde u) + F_{r+1:p}(\rvu_{r+1:p})$ decreases which contradicts $\rvu$ being a local minima.
        \item
         Now, let $\tilde u = \chi(B)$ and pick an index $j \in \intervalle{q}{r-1}$. We distinguish two cases
        \begin{itemize}[leftmargin=*]
        	\item If $\tilde u = \chi(B) = 0$, then if $\chi(\intervalle{q}{j})=0$ and/or $\chi(\intervalle{j+1}{r}) = 0$, either \Cref{theorem:ell_q_local-1} or~\Cref{theorem:ell_q_local-2} hold true. It remains to prove that one of these two inequalities is also valid when  $\chi(\intervalle{q}{j})>0$ and $\chi(\intervalle{j+1}{r}) > 0$. In this case we necessarily have $\chi(\intervalle{q}{j}) \geq \chi(\intervalle{j+1}{r})$, otherwise \Cref{lemma:merge} would contradicts the fact that  $\chi(B) = 0$. Hence~\Cref{theorem:ell_q_local-2} holds true.
         \item If $\tilde u = \chi(B)>0$, assume that  $F_{q:j}$ is non-increasing at $\tilde{u}$. Then, given that $F_{q:j}$ admits a finite number of critical points minimizers (at most 3), there exists a sufficiently small $\zeta>0$ such that, for all $\epsilon \in (0,\zeta)$,
        \begin{equation*}
        	\tilde{u} + \epsilon < u_{q-1} \quad \text{ and } \quad
        	 F_{q:j}(\tilde{u} + \epsilon) < F_{q:j}(\tilde{u}),
		\end{equation*}
		which implies that the point $\vv$ defined as $\rvu$ everywhere except on the bloc $\intervalle{q}{j}$ where it takes the value 	$\tilde{u} + \epsilon$ is feasible and verifies $F(\vv) < F(\rvu)$. This contradicts the local optimality of $\rvu$. As such $F_{q:j}$ is increasing at $\tilde{u}$. Similarly, we can show that $F_{j+1:r}$ is decreasing at $\tilde{u}$. Then, we get from the possible variations of $F_B$ for a block $B$ that
		\begin{align*}
		& \tilde{u} \leq \chi(\intervalle{j+1}{r}),  \  \text{{$\tilde{u}$ on a $\searrow$ part of $ F_{j+1:r}$}}\\
		& \tilde{u} \leq \chi(\intervalle{q}{j}) \text{ or } \tilde{u} \geq \chi(\intervalle{q}{j}),  \ \text{{$\tilde{u}$ on an $\nearrow$ part of $ F_{q:j}$}}
		\end{align*}
		Moreover, when both $ \tilde{u} \leq \chi(\intervalle{j+1}{r}) $ and $\tilde{u} \leq \chi(\intervalle{q}{j}) $, we get from $\tilde{u}>0$ together with \Cref{lemma:merge} that $\chi(\intervalle{q}{j}) \geq \chi(\intervalle{j+1}{r}) \geq \tilde{u}$, which completes the proof.
		 \end{itemize}
    \end{enumerate}

    \textbf{Sufficient conditions} Let $\rvu$ be such that the conditions of \Cref{theorem:ell_q_local} hold. First of all, note that only the last block of $\rvu$ can be $0$. Then, for each  block $B=\intervalle{q}{r}$ of $\rvu$ such that its value $\chi(B) >0$, let define for all $j \in \intervalle{q}{r-1}$, $ \zeta_{B,j}>0$ such that $F_{q:j}$ is increasing on $(\chi(B), \chi(B)+ \zeta_{B,j})$ and $F_{j+1:r}$ is decreasing on $(\chi(B) -  \zeta_{B,j}, \chi(B))$. Note that such  $\zeta_{B,j}$ always  exists from the conditions of \Cref{theorem:ell_q_local}.\\
    We define $\bar \zeta = \min_{B, j \in B} \zeta_{B,j}$. Now, take $\vv \in \mathcal{B}(\mathbf{0},\bar{\zeta})$ such that $\rvu + \vv$ is a feasible point. Hence, on each block $B$ of $\rvu$, the values of $\vv$ are sorted in decreasing order (to preserve feasibility). Let us look at one block of $\rvu$, say the first one  $B=\intervalle{1}{r}$ (wlog), for which we denote by $l \in \intervalle{1}{r}$ be the index corresponding to the first negative value of $\vv$. We have the decomposition $\vv_B = \sum_{j=1}^r \mathbf{w}^{j}$ where
    \begin{align}
    	&\textstyle \mathbf{w}^{j} = (v_{j} - \max(v_{j+1},0)) \mathbf{1}_{1:j}, \forall j \in \intervalle{1}{l-1} \\
    	&\textstyle \mathbf{w}^{j} = (v_{j} - \min(v_{j-1},0)) \mathbf{1}_{j:r}, \forall j \in \intervalle{l}{r}
    \end{align}
    with $\mathbf{1}_{m:n}$ a vector of size $r$ with ones between $m$ and $n$. See \Cref{figure:schema_preuve_local_ell_q} for an illustration.
    Moreover, for all $j \in B$, we have  $\mathbf{w}^{j} \in \mathcal{B}(\mathbf{0},\bar{\zeta})$. It then follows from the fact that, on $\mathcal{B}(\mathbf{0},\bar{\zeta})$, $F_{1:j}$ is increasing $ \forall j \in \intervalle{1}{l-1}$  and  $F_{j:r}$ is decreasing $\forall j \in \intervalle{l}{r}$ (by definition of $\bar{\zeta}$), that
    \begin{align}
        F_B(\rvu_B) & \leq F_B(\rvu_B + \mathbf{w}^{1}) \\
        & \leq F_B(\rvu_B + \mathbf{w}^{1} +  \mathbf{w}^{2}) \\
        & \leq F_B(\rvu_B + \sum_{j=1}^r \mathbf{w}^{j}) \\
        & = F_B(\rvu_B + \vv_B).
    \end{align}

    Repeating this for each block of $\rvu$ and observing that if its last block is $0$, then it is on an increasing part of each sub-function over this block, we get that $F(\rvu) \leq F(\vv)$, for all $\vv \in \mathcal{B}(\mathbf{0},\bar{\zeta})$ such that $\rvu + \vv$ is feasible. This proves that $\rvu$ is a local minimizer.
\end{proof}

\begin{figure*}
    \centering
    \includegraphics[scale=0.7]{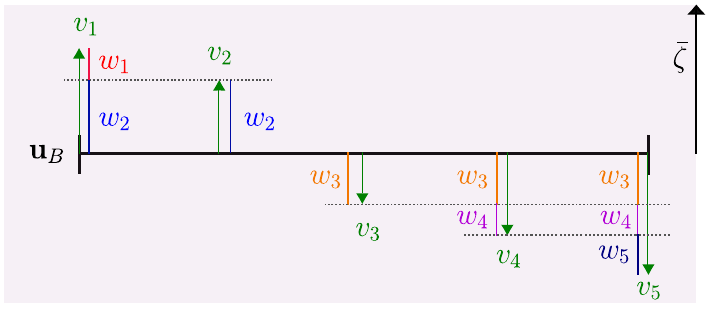}
    \caption{Illustration of the proof of \Cref{theorem:ell_q_local}. }
    \label{figure:schema_preuve_local_ell_q}
\end{figure*}

\begin{lemma} \label{lem:bloc_above_max_subbloc}
    Let $B_1$ and $B_2$ be two successive blocks of indices.
    Then $$\chi(B_1 \cup B_2) \leq \max\left(\chi(B_1),\chi(B_2) \right).$$
    \end{lemma}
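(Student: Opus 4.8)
The plan is to reduce everything to the additive structure $f_{B_1\cup B_2} = f_{B_1} + f_{B_2}$ of the block derivatives together with the variation analysis of \Cref{prop:variation_scalar}. Write $B := B_1 \cup B_2$ and $M := \max(\chi(B_1),\chi(B_2))$. Since $\chi(B)$ is by definition \eqref{equation:ell_q_chi} the \emph{largest} local minimizer of $F_B$ on $\sR_+$, it suffices to show that $F_B$ has no local minimizer strictly above $M$; I would get this by proving $f_B(z) > 0$ for every $z > M$, which forces $F_B$ to be strictly increasing on $(M,+\infty)$ and hence $\chi(B) \leq M$.

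First I would record, for each block $B_j$, the sign of $f_{B_j}$ on $(M,+\infty)$, distinguishing the two regimes of $\chi(B_j)$. If $\chi(B_j) = 0$, then (exactly as in the proof of \Cref{prop:variation_scalar}, using $\bar y_{B_j} < \tau(\bar\lambda_{B_j})$) one has $f_{B_j}(z) > 0$ for all $z > 0$, a fortiori for $z > M$. If $\chi(B_j) > 0$, then $\chi(B_j) = \rho^+(\bar y_{B_j},\bar\lambda_{B_j}) \geq m(\bar\lambda_{B_j})$ lies in the convex region of $F_{B_j}$ and satisfies $f_{B_j}(\chi(B_j)) = 0$; since $f_{B_j}$ is non-decreasing on $[m(\bar\lambda_{B_j}),+\infty)$ and $z > M \geq \chi(B_j)$, we obtain $f_{B_j}(z) \geq 0$.

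Next I would sum these two estimates, taking care that the sum is \emph{strictly} positive. For the block attaining the maximum, say $\chi(B_1) = M$, I would upgrade the above to $f_{B_1}(z) > 0$ for $z > M$: on $(m(\bar\lambda_{B_1}),+\infty)$ one has $\psi_0''(z) > -1/\bar\lambda_{B_1}$, so $F_{B_1}$ is strictly convex there and $f_{B_1}$ strictly increasing, whence $f_{B_1}(z) > f_{B_1}(M) = 0$ for $z > M$ (the case $M = 0$ already being covered by the previous paragraph). Adding the non-negative contribution of $B_2$ gives $f_B(z) = f_{B_1}(z) + f_{B_2}(z) > 0$ for all $z > M$, so $F_B$ is strictly increasing on $(M,+\infty)$ and its largest local minimizer satisfies $\chi(B) \leq M = \max(\chi(B_1),\chi(B_2))$, which is the claim.

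I expect the main obstacle to be the bookkeeping of the two regimes ($\chi(B_j) = 0$ versus $\chi(B_j) > 0$) combined with the need for at least one strictly positive summand: one must ensure that the block realizing $M$ is the one supplying the strict inequality, and that the strict convexity of $F_{B_j}$ above $m(\bar\lambda_{B_j})$ (which follows from $\psi_0''$ being increasing, \Cref{hyp:nonconvex}) is invoked correctly. As a sanity check, when $\chi(B_1) \neq \chi(B_2)$ this recovers the upper bound already given by \Cref{lemma:merge}, while the argument additionally settles the tie $\chi(B_1) = \chi(B_2)$, for which the same computation even yields the equality $\chi(B) = M$.
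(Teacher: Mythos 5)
Your proof is correct and takes essentially the same approach as the paper's: both arguments use the variation profiles of \Cref{prop:variation_scalar} to show that above $\max\left(\chi(B_1),\chi(B_2)\right)$ each block objective $F_{B_1}$, $F_{B_2}$ is increasing, so by additivity $F_B = F_{B_1}+F_{B_2}$ can have no local minimizer there. The only difference is presentational: the paper argues by contradiction on function values ($F_B(\chi(B)-\varepsilon)\leq F_B(\chi(B))$ for $\chi(B)$ the unique positive minimizer of $F_B$), whereas you argue directly on derivative signs ($f_B>0$ above the max), which makes the strictness bookkeeping slightly more explicit but is the same underlying idea.
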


    \begin{proof}Let $B = B_1 \cup B_2$ .
    If $\max\left(\chi(B_1),\chi(B_2) \right) = 0$, then $\chi(B) =0$ (as in this case $F_{B_1}$ and $F_{B_2}$ are convex).
    Now, assume $\max\left(\chi(B_1),\chi(B_2) \right) > 0$.
    If $\chi(B)=0$, then the result trivially holds.
    If $\chi(B)>0$, assume that the statement is not true with $\chi(B) > \max\left(\chi(B_1),\chi(B_2) \right) >0$. Hence, from the variations of $F_{B_1}$ and $F_{B_2}$, we get that $\chi(B)$ belongs to an increasing part of each of these two functions. Hence, there exists $\varepsilon_0>0$ such that $\forall \varepsilon \in (0,\varepsilon_0)$,
    \begin{align*}
        F_B(\chi(B) - \varepsilon) & = F_{B_1}(\chi(B) - \varepsilon) +  F_{B_2}(\chi(B) - \varepsilon) \\
        & \leq F_{B_1}(\chi(B)) +  F_{B_2}(\chi(B)) = F_B( \chi(B)),
    \end{align*}
    which contradicts the fact that  $\chi(B)>0$ is the unique positive minimizer of $F_B$.
    \end{proof}

\begin{lemma}
    \label{lemma:pav_blocks}
Consider $\rvu \in \gK_p^+$ the output of the PAV algorithm.
    On each maximal block of $B = \intervalle{q}{r}$ where $\rvu$ is constant equal to $\tilde u$, it holds for all $q \leq j < r$,
    \begin{equation}
        \chi(\intervalle{q}{j}) \leq   \tilde u  \leq \chi(\intervalle{j+1}{r}).
    \end{equation}
\end{lemma}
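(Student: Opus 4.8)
The plan is to argue by induction on the pooling operations performed by the PAV \Cref{algo:pava_weak_nonneg}, maintaining as an invariant that \emph{every} block $B=\intervalle{q}{r}$ in the current partition, with common value $\tilde u=\chi(B)$, satisfies the ordering condition \eqref{theorem:ell_q_local-1}, i.e. $\chi(\intervalle{q}{j})\le \tilde u\le \chi(\intervalle{j+1}{r})$ for every split $q\le j<r$. At initialization all blocks are singletons, so the condition is vacuous. The crucial structural fact is that PAV merges two consecutive blocks $B_1$ (left) and $B_2$ (right) \emph{only} when monotonicity is violated, i.e. only when $\chi(B_1)\le \chi(B_2)$ (the forward test $x_{B_0}<x_{B_+}$ and the backward test $x_{B_-}\le x_{B_0}$ are both of this form). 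Hence \Cref{lemma:merge} applies to each merge and yields $\chi(B_1)\le \chi(B_1\cup B_2)\le \chi(B_2)$; in particular the invariant already holds at the split sitting exactly on the old boundary between $B_1$ and $B_2$.

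It then remains to check the invariant for the merged block $B:=B_1\cup B_2$, with $B_1=\intervalle{q}{s}$ and $B_2=\intervalle{s+1}{r}$, at an internal split $j\neq s$; here I use that $B_1,B_2$ satisfy the invariant by induction and that $\chi(\cdot)$ depends on a block only through $(\bar y_B,\bar\lambda_B)$, so I may re-factor $B$ across any split. If $j$ lies inside $B_2$, write the prefix as $B_1\cup P_2$ with $P_2=\intervalle{s+1}{j}$ and the suffix as $S_2=\intervalle{j+1}{r}$. The induction hypothesis on $B_2$ gives $\chi(P_2)\le \chi(B_2)\le \chi(S_2)$, and the merge test gives $\chi(B_1)\le \chi(B_2)$, so $\max(\chi(B_1),\chi(P_2))\le \chi(B_2)$; \Cref{lem:bloc_above_max_subbloc} then yields $\chi(B_1\cup P_2)\le \chi(B_2)\le \chi(S_2)$, and applying \Cref{lemma:merge} to the factoring $B=(B_1\cup P_2)\cup S_2$ sandwiches $\chi(B)$ between $\chi(B_1\cup P_2)$ and $\chi(S_2)$, establishing the invariant at this split.

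The delicate case is $j$ inside $B_1$: write $B_1=P\cup S_1$ with $P=\intervalle{q}{j}$, $S_1=\intervalle{j+1}{s}$, so the suffix at $j$ is $S_1\cup B_2$. The lower inequality $\chi(P)\le \chi(B)$ is immediate ($\chi(P)\le \chi(B_1)$ by induction, then $\chi(B_1)\le \chi(B)$ by \Cref{lemma:merge}). For the upper inequality $\chi(B)\le \chi(S_1\cup B_2)$, the clean reduction is to prove $\chi(P)\le \chi(S_1\cup B_2)$ and then invoke \Cref{lemma:merge} on the factoring $B=P\cup(S_1\cup B_2)$. When $\chi(S_1)\le \chi(B_2)$ this is painless: \Cref{lemma:merge} on $(S_1,B_2)$ gives $\chi(S_1\cup B_2)\ge \chi(S_1)\ge \chi(B_1)\ge \chi(P)$. \textbf{The sub-case $\chi(S_1)>\chi(B_2)$ is the main obstacle.} Here $S_1$ and $B_2$ are \emph{correctly} ordered, so \Cref{lemma:merge} does not apply, and one cannot simply lower-bound $\chi(S_1\cup B_2)$: the tempting inequality $\chi(C\cup D)\ge \min(\chi(C),\chi(D))$ is \emph{false} for nonconvex penalties — this is exactly the pathology of \eqref{theorem:ell_q_local-2} and the bottom row of \Cref{fig:loc_min}, where merging correctly ordered blocks produces a strictly smaller value.

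I expect to resolve this sub-case not through the two merge lemmas alone but by descending to the derivative $f_{S_1\cup B_2}=f_{S_1}+f_{B_2}$, exactly as in the proofs of \Cref{lemma:merge,lem:bloc_above_max_subbloc}: evaluate $f_{S_1\cup B_2}$ at $\chi(B)$ and show it is nonpositive, which together with the concave–convex profile of $F_{S_1\cup B_2}$ from \Cref{prop:variation_scalar} forces its largest local minimizer $\chi(S_1\cup B_2)$ to lie at or above $\chi(B)$. To control that sign I would use $f_B(\chi(B))=0$, the decomposition $f_B=f_P+f_{S_1\cup B_2}$, the non-increasing sortedness of $\rvy$ and of $(\lambda_i)_i$, and the inequalities $\chi(P)\le \chi(B_1)\le \chi(B_2)$ and $\chi(S_1)\ge \chi(B_1)$ to locate $\chi(B)$ on the monotone pieces of $f_P$ and $f_{S_1}$; the care needed is precisely to rule out that $\chi(B)$ falls on the rising part of the concave bump of $F_{S_1}$ below its interior local maximum. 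Finally, I note that a merge can only create a \emph{zero} block when its left part is already zero (by \Cref{lemma:merge}, $\chi(B)=0$ forces $\chi(B_1)=0$), so positive material is confined to suffixes; this keeps the suffix inequality $\tilde u\le \chi(\intervalle{j+1}{r})$ — the form actually used downstream — trivially valid on zero blocks. Once every pooling operation is shown to preserve the invariant, the terminal blocks of the algorithm satisfy \eqref{theorem:ell_q_local-1}, which is the assertion of the lemma.
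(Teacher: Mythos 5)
Your strategy --- induction over the pooling operations, maintaining \eqref{theorem:ell_q_local-1} as an invariant for every block of the current partition --- is genuinely different from the paper's proof, which instead reconstructs the merge history of the final block (forward singleton additions, then backward merges) and chains inequalities from \Cref{lemma:merge}, \Cref{lem:bloc_above_max_subbloc} and the ordering guaranteed by the feasibility of the forward pass. The decisive problem, however, is that your proposal stops being a proof exactly at its central point: the sub-case you yourself call ``the main obstacle'' (split inside $B_1$ with $\chi(S_1)>\chi(B_2)$) is handled only by a plan (``I expect to resolve\dots I would use\dots''), and without it the invariant is not established and the induction does not go through. This is a genuine gap, not a routine verification: that sub-case is precisely where the nonconvex pathology \eqref{theorem:ell_q_local-2} lives --- the one situation in which neither \Cref{lemma:merge} nor \Cref{lem:bloc_above_max_subbloc} provides a lower bound on the $\chi$ of a union of correctly ordered blocks --- and it is the whole reason this lemma requires a nontrivial argument.

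The gap is closable, and in fact more simply than your sketch suggests: no analysis of the concave bump of $F_{S_1}$, and no case split on $\chi(S_1)$ versus $\chi(B_2)$, is needed. Suppose $\chi(B)>0$ (otherwise the suffix inequality is trivial) and write $f_B=f_P+f_{S_1\cup B_2}$, so that $f_{S_1\cup B_2}(\chi(B))=-f_P(\chi(B))$ since $f_B(\chi(B))=0$. You already have $\chi(P)\le\chi(B_1)\le\chi(B)$. If $\chi(P)>0$, then $\chi(B)\ge\chi(P)\ge m(\bar\lambda_P)$, so $\chi(B)$ lies on the increasing part of $f_P$ beyond its largest root, whence $f_P(\chi(B))\ge 0$; if $\chi(P)=0$, then $\bar y_P<\tau(\bar\lambda_P)$, i.e.\ $\min f_P>0$ everywhere. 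In both cases $f_{S_1\cup B_2}(\chi(B))\le 0$, and the concave--convex profile from \Cref{prop:variation_scalar} then forces the largest local minimizer of $F_{S_1\cup B_2}$ to exist and to satisfy $\chi(S_1\cup B_2)\ge\chi(B)$, which is the missing inequality. (The paper sidesteps the same difficulty by a different trick: $\chi(B)\le\max\left(\chi(\text{prefix}),\chi(\text{suffix})\right)$ from \Cref{lem:bloc_above_max_subbloc}, combined with the \emph{strict} inequality $\chi(\text{prefix})<\chi(B)$ extracted from the forward-pass ordering, forces the suffix to realize the max.) Two smaller caveats if you complete the argument: PAV's backward test is non-strict, so ties must be handled before invoking \Cref{lemma:merge}, which assumes $\chi(B_1)<\chi(B_2)$; and on a zero last block your closing remark secures only the suffix inequality, while the lemma as stated also claims $\chi(\intervalle{q}{j})\le\tilde u=0$ for every prefix.
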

\begin{proof}
Figure~\ref{fig:proof_PAVlocmin-1} provides a scheme that illustrate the main ideas of this proof.
    Let $B = \intervalle{q}{r}$ be a block of $\rvu$.
    If $\chi(B) =0$, implying it is the last block of $\rvu$, the second point of \Cref{theorem:ell_q_local} is immediately satisfied.
    From now on, $\chi(B)>0$. Assume that this block has been involved at most within one backward pass. Then, given the steps of the PAV algorithms, there exists a partition of  $B$ in $I$ blocks defined by the indices $q = q_1 < q_2 < \cdots < q_I \leq r$ (where $q_i$, is the first index of the i-th block of the partition and we set $q_{I+1}=r+1$ by convention) such that $\forall i \in \intervalle{1}{I}$,
    \begin{itemize}
       \item $\forall  j\in \intervalle{q_i+1}{q_{i+1}-1}$, the singleton $\{j\}$ was added during a forward pass, i.e.,
       \begin{equation}\label{eq:proof_PAVlocmin-1}
        \chi(\intervalle{q_i}{j-1}) \leq \chi(\intervalle{q_i}{j}) \leq \chi(\{j\}) \ .
       \end{equation}
       \item The block $ \chi(\intervalle{q_i}{q_{i+1}-1})$ was merged during a backward pass
       \begin{equation}\label{eq:proof_PAVlocmin-2}
       \chi(\intervalle{q_{i}}{q_{i+1}-1}) \leq \chi(\intervalle{q_{i}}{r}) \leq \chi(\intervalle{q_{i+1}}{r}) \ .
       \end{equation}
   \end{itemize}
   where in both cases the inequalities with the central term are due to  \Cref{lemma:merge}.\\
   Now let take $l \in B \setminus \{r\}$ and denote by $i^*$ the index of the block such that $l \in \intervalle{q_{i^\star}}{q_{i^\star +1}-1}$.
     We have from~\Cref{eq:proof_PAVlocmin-1} together with the feasibility of the forward pass,
       \begin{align}\label{eq:proof_PAVlocmin-2bis}
            \chi(\intervalle{q_1}{q_2-1}) > \cdots > \chi(\intervalle{q_{i^*}}{q_{i^*+1}-1})
            \underset{(\ref{eq:proof_PAVlocmin-1})}{\geq}  \chi(\intervalle{q_{i^*}}{l}).
      \end{align}
    Then, we have
      \begin{align}
        \chi(\intervalle{q_1}{l}) & \leq \max\left( \max_{i=1:i^*-1}  \chi(\intervalle{q_{i}}{q_{i+1}-1}),  \chi(\intervalle{q_{i^*}}{l})\right) \notag \\
        & \underset{\eqref{eq:proof_PAVlocmin-2bis}}{\leq} \chi(\intervalle{q_{1}}{q_2-1}) \underset{(\ref{eq:proof_PAVlocmin-2})}{\leq} \chi(\intervalle{q_{1}}{r}) = \chi(B) .\label{eq:proof_PAVlocmin-3}
      \end{align}
            where the first inequality is due to a recursive application of \Cref{lem:bloc_above_max_subbloc}.
    From the two previous equations, we have
      \begin{equation}\label{eq:proof_PAVlocmin-4}
            \chi(\intervalle{q_{i^*}}{l}) <  \chi(B) \ .
      \end{equation}
      Now, combining the inequalities in~\Cref{eq:proof_PAVlocmin-2}, we deduce
      \begin{equation}\label{eq:proof_PAVlocmin-5}
          \chi(\intervalle{q_{i^*}}{r}) \geq  \chi(\intervalle{q_{1}}{r}) = \chi(B) \ .
      \end{equation}
    \Cref{lem:bloc_above_max_subbloc} states that
      \begin{equation}
        \chi(\intervalle{q_{i^\star}}{r}) \leq \max(\chi(\intervalle{q_{i^\star}}{l}),\chi(\intervalle{l+1}{r}) ).
      \end{equation}
      Since, from \Cref{eq:proof_PAVlocmin-4,eq:proof_PAVlocmin-5}, $\chi(\intervalle{q_{i^\star}}{r}) >\chi(\intervalle{q_{i^\star}}{l})$, we get
      \begin{equation}\label{eq:proof_PAVlocmin-6}
             \chi(\intervalle{l+1}{r}) \geq  \chi(\intervalle{q_{i^*}}{r}) \geq   \chi(B) \ .
      \end{equation}
      Finally, we conclude that the block $B$ satisfies
      \begin{align*}
        \chi(\intervalle{q_1}{l})   \underset{\eqref{eq:proof_PAVlocmin-3}}{\leq} \chi(B) \underset{\eqref{eq:proof_PAVlocmin-6}}{\leq} \chi(\intervalle{l+1}{r}).
      \end{align*}
       If $B$ was the result of multiple backward passes (not at most one as assumed above), we can get the results through the repetition of the above arguments.
\end{proof}

   \begin{figure*}
   \centering
   \includegraphics[scale=0.6]{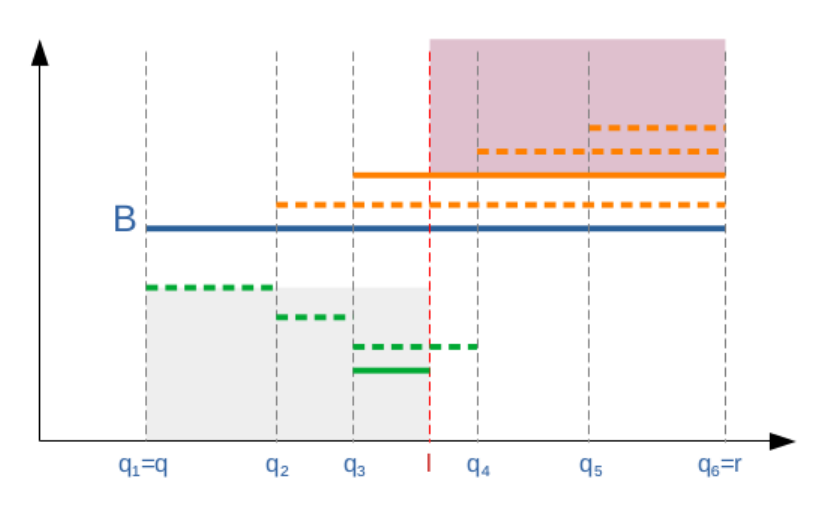}
   \caption{Illustration of the proof of \Cref{lemma:pav_blocks}. Dashed green blocks illustrate the feasibility of the forward pass while the continuous green block is due to~\Cref{eq:proof_PAVlocmin-1}. Together, they illustrate the inequalities in~\Cref{eq:proof_PAVlocmin-2bis}. Then, inequalities in~\Cref{eq:proof_PAVlocmin-3} impose that $\chi(\intervalle{q}{l})$ lies in the gray area. On the other hand, dashed orange blocs illustrate the constraints in~\Cref{eq:proof_PAVlocmin-2} due to the backward pass (with in particular inequalities in~\Cref{eq:proof_PAVlocmin-5}). Finally, the red area correspond to~\Cref{eq:proof_PAVlocmin-6}, highlighting the region were $ \chi(\intervalle{l}{r})$ belongs to. We clearly see that $l$ separate the block in two subblocks satisfying the conditions of \Cref{theorem:ell_q_local}.\label{fig:proof_PAVlocmin-1}  }
   \end{figure*}

\begin{proof}[Proof of \Cref{thm:PAV_loc_cv}]
    Let $\rvu$ be the solution returned by the PAV algorithm.
    First of all, by construction of the PAV algorithm, each block of $\rvu$ satisfies the first point of~\Cref{theorem:ell_q_local}.
    The previous \Cref{lemma:pav_blocks} proves that the second point of the Theorem is also verified, as \Cref{theorem:ell_q_local-1} holds.
   \end{proof}

\begin{lemma}
    \label{lemma:merging}
    Let the regularization parameters $(\lambda_i)_i$ be such that \Cref{hypothesis:global_mins} is satisfied.
    For any successive blocks of indices $B_1$ and $B_2$ such that $\chi(B_1) > \chi(B_2)$, we cannot have  $\chi(B_1) >  \chi(B_2) > \chi(B_1 \cup B_2) $ with $\chi(B_1 \cup B_2) > 0 $ global minimizer of $F_{B_1\cup B_2}$.
\end{lemma}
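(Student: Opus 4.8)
The plan is to argue by contradiction. Writing $B := B_1 \cup B_2$, suppose that $\chi(B_1) > \chi(B_2) > \chi(B) > 0$ with $\chi(B)$ a global minimizer of $F_B$. Since $\chi(B)>0$ is an interior minimizer of the differentiable $F_B$, it is a critical point, so $f_B(\chi(B)) = 0$. The goal is to show that, under \Cref{hypothesis:global_mins}, the derivative $f_B = f_{B_1} + f_{B_2}$ is in fact strictly negative at $\chi(B)$, yielding a contradiction.

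The first step is to locate $\chi(B)$ precisely. Because $\chi(B)>0$ is a global minimizer, \Cref{prop:variation_scalar}~\ref{prop:variations_item_3} gives $\bar y_B \geq T(\bar\lambda_B)$, and since $\rho^+$ is non-decreasing in its first argument (\Cref{prop:variation_scalar}~\ref{prop:variations_item_2}),
\[
  \chi(B) = \rho^+(\bar y_B, \bar\lambda_B) \geq \rho^+(T(\bar\lambda_B), \bar\lambda_B).
\]
Applying \Cref{hypothesis:global_mins} to the block $B$ and its left sub-block $B_1 = \intervalle{q}{r'}$ yields $\rho^+(T(\bar\lambda_B), \bar\lambda_B) \geq m(\bar\lambda_{B_1})$, hence $\chi(B) \geq m(\bar\lambda_{B_1})$. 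Moreover, as the $\lambda_i$ are non-increasing and $B_1$ precedes $B_2$, one has $\bar\lambda_{B_1} \geq \bar\lambda_{B_2}$; combined with the fact that $m(\cdot)$ is non-decreasing (a consequence of $\psi_0''$ being increasing, \Cref{hyp:nonconvex}), this gives $\chi(B) \geq m(\bar\lambda_{B_1}) \geq m(\bar\lambda_{B_2})$.

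The second step reads off the signs of the block derivatives at $\chi(B)$. Since $\chi(B_1), \chi(B_2) > 0$, each $\chi(B_i) = \rho^+(\bar y_{B_i}, \bar\lambda_{B_i})$ is the minimizer of the convex restriction of $F_{B_i}$ to $[m(\bar\lambda_{B_i}), +\infty)$, on which $f_{B_i}$ is increasing and vanishes exactly at $\chi(B_i)$. The bounds from the first step, together with $\chi(B) < \chi(B_2) < \chi(B_1)$, place $\chi(B)$ in the interval $[m(\bar\lambda_{B_i}), \chi(B_i))$ for both $i=1,2$ (the strict inequality $\chi(B) < \chi(B_i)$ also forces $m(\bar\lambda_{B_i}) < \chi(B_i)$). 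Hence $f_{B_1}(\chi(B)) < 0$ and $f_{B_2}(\chi(B)) < 0$, so that
\[
  f_B(\chi(B)) = f_{B_1}(\chi(B)) + f_{B_2}(\chi(B)) < 0,
\]
contradicting $f_B(\chi(B)) = 0$. Therefore the configuration cannot occur.

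The main obstacle — and precisely the role of \Cref{hypothesis:global_mins} — is guaranteeing that $\chi(B)$ lies in the convex, strictly decreasing part of \emph{both} $F_{B_1}$ and $F_{B_2}$, rather than in their concave parts where the sign of $f_{B_i}$ is uncontrolled. The lower bound $\chi(B) \geq m(\bar\lambda_{B_1})$ supplied by the hypothesis is exactly what prevents $\chi(B)$ from falling below the concavity--convexity transition of the left sub-block, and the monotonicity of $m(\cdot)$ transfers this control to the right sub-block.
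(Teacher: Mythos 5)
Your proof is correct and uses essentially the same approach as the paper's: both argue by contradiction via the sign of $f_B = f_{B_1}+f_{B_2}$ at $\chi(B)$, the concavity--convexity threshold $m$, the monotonicity of $\rho^+$, and \Cref{hypothesis:global_mins}. The only difference is organizational — the paper derives $f_{B_1}(\chi(B))>0$ from criticality and lands the contradiction on the hypothesis (concluding $\chi(B) < m(\bar\lambda_{B_1})$), whereas you invoke the hypothesis first to place $\chi(B) \geq m(\bar\lambda_{B_1})$ and land the contradiction on criticality ($f_B(\chi(B))<0 \neq 0$); these are the same ingredients deployed in reverse order.
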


\begin{proof}
    We denote $B  := B_1 \cup B_2$.
    By contradiction, let assume $\chi(B_1)  > \chi(B_2) >  \chi(B)$ with $\chi(B) >0$ global minimizer of $F_B$.
    Then, by \Cref{prop:variation_scalar}, we have the following inequality:
    \begin{equation*}
        \chi(B)  = \rho^+(\bar y_B, \bar \lambda_B) \geq m(\bar \lambda_B) \ .
    \end{equation*}
    Besides, $f_{B_1}(\chi(B)) > 0$, because $f_B(\chi(B)) = f_{B_1} (\chi(B)) + f_{B_2}(\chi(B)) = 0$ with $ f_{B_2}(\chi(B)) < 0$ (because $\chi(B_2)> \chi(B) \geq m(\bar \lambda_B) \geq m(\bar \lambda_{B_2})$). We deduce $\chi(B) < m(\bar \lambda_{B_1})$ (actually, $\chi(B)$ is smaller than the smallest root of $f_{B_1}$ corresponding to the local maxima of $F_{B_1}$).
    Yet, as by assumption $\chi(B)> 0$ and $\chi(B)$ global minimizer of $F_B$, it implies $\bar y_B > T(\bar \lambda_B)$.
    As $\rho^+$ is non-decreasing with $y$, it implies that
    \begin{align}
        m(\bar \lambda_{B_1}) > \chi(B)  & = \rho^+(\bar y_B, \bar \lambda_B) \\
        & \geq \rho^+(T(\bar \lambda_B), \bar \lambda_B),
    \end{align}
    which contradicts \Cref{hypothesis:global_mins} and ends the proof.
\end{proof}

\begin{proof}[Proof of \Cref{theorem:oscar_ell_q_global_nec}]
    \label{proof:theorem:oscar_ell_q_global_nec}
    Let $\rvx$ be a global minimizer of $(P_p)$ and $i$ be the first index of its last block, i.e. such that $x_i  = \dots = x_p = \tilde x$ and $x_{i-1} > \tilde x$. We distinguish two cases depending on the value of $\tilde{x}$.
    \begin{itemize}
        \item If $\tilde x = 0$, then $\rvx_{[i-1]}$ must be a global minimizer of $(P_{i-1})$, otherwise this would contradict the optimality of $\rvx$ for $(P_p)$.
        As such,  we have $\rvx_{[i-1]} \in \gL_{i-1}$, and $\rvx \in \gS_p$.

        \item If $\tilde x = \chi(\intervalle{i}{p}) > 0$.
        Then, $\tilde x$ is the global  minimizer of $F_{i:p}$ (otherwise $(\rvx_{[i-1]}, 0, \dots, 0)$ would lead to a better objective value).
        Now, as in the statement of the theorem, let $i^\star$ denotes the largest index such that  there exists $\rvu^{i^\star-1} \in \gL_{i^\star-1}$ with $\mathbf{v} = (\rvu^{i^\star-1}, \chi(\intervalle{i^\star}{p}),\dots, \chi(\intervalle{i^\star}{p} ) )$  feasible and assume that $i < i^*$. Clearly $\mathbf{v} \in \gL_{p}$  otherwise this would contradict $\rvu^{i^\star-1} \in \gL_{i^\star-1}$.
        Given that $\rvx$ is a global (and thus local) minimizer of $(P_p)$, we get from \Cref{theorem:ell_q_local}, that one of the following two statements holds true
        \begin{itemize}
            \item $\chi(\intervalle{i}{i^*-1}) \leq  \tilde x \leq \chi(\intervalle{i^*}{p})$,
            \item $\chi(\intervalle{i}{i^*-1}) \geq   \chi(\intervalle{i^*}{p}) \geq \tilde x$.
        \end{itemize}
        Yet, \Cref{lemma:merging} eliminates the case $\chi(\intervalle{i}{i^*-1}) \geq   \chi(\intervalle{i^*}{p}) \geq \tilde x$ (because $\tilde x$ is the global minimizer of $F_{i:p}$), so we must have $\chi(\intervalle{i}{i^*-1}) \leq  \tilde x \leq \chi(\intervalle{i^*}{p})$. \\
        By construction, $\rvu^{i^*-1} \in \gL_{i^*-1}$ and $u^{i^*-1}_{i^*-1} \geq \argmin_{x \in \sR_+} F_{i^*:p}(x)$.
        \begin{itemize}
            \item \textsc{Case $\argmin_{x \in \sR_+} F_{i^*:p}(x) = \chi(\intervalle{i^*}{p})$.}
            There exists a block $B$ of $\rvu^{i^*-1}$ which contains $i$ and we denote it $B = \intervalle{s}{t}$ with $s \leq i \leq t < i^*$.
            Then by feasibility of $(\rvu^{i^*-1}, \chi(\intervalle{i^*}{p}),\dots, \chi(\intervalle{i^*}{p} ) )$, we have  $ \chi(\intervalle{i^*}{p}) < \chi(\intervalle{s}{t}) $.
            Moreover, from the same arguments as used previously, involving \Cref{theorem:ell_q_local} and \Cref{lemma:merging}, we get $\chi(\intervalle{i}{t}) \leq \tilde x$.
            Finally, from the local optimality of $(\rvu^{i^*-1}, \chi(\intervalle{i^*}{p}),\dots, \chi(\intervalle{i^*}{p} ) )$, we obtain from \Cref{theorem:ell_q_local}  that $\chi(\intervalle{i}{t}) \geq \chi(\intervalle{s}{t})$. Gathering these different inequalities, we get
            \begin{equation*}
                \chi(\intervalle{i^*}{p}) < \chi(\intervalle{s}{t}) \leq  \chi(\intervalle{i}{t}) \leq \tilde x \ ,
            \end{equation*}
            which leads to a contradiction with $\tilde x$ being smaller than $ \chi(\intervalle{i^*}{p})$.
            \item \textsc{Case $\argmin_{x \in \sR_+} F_{i^*:p}(x) = 0$.}  It contradicts $\rvx$ being a global minimizer of $(P_p)$, as one could set $\rvx_{[i^*:p]}$ to $0$ and get a better global objective value.
        \end{itemize}
        To sum up, we cannot have $i < i^*$. Then, if $i> i^*$, it means that $\rvx_{[1:i-1]} \not \in \gL_{i-1}$ (by definition of $i^\star$), so one can change some components while remaining feasible and decrease the objective value which contradicts $\rvx$ being a global minimizer. We finally get that $i$ must equal $i^*$ so $\rvx \in \gS_p$. \qedhere
    \end{itemize}
\end{proof}

\begin{proof}[Proof of \Cref{example:extension_concave}]
    \label{proof:concave_ellq}
    We want to show that for any block $B = \intervalle{l}{r}$, any sub-block $B_1 = \intervalle{l}{r'}$, we have
    \begin{equation}
        \rho^+(T(\bar \lambda_B), \bar \lambda_B) \geq m(\bar \lambda_{B_1}).
    \end{equation}
    We first compute $\rho^+(T(\bar \lambda_B), \bar \lambda_B)$.
    It corresponds to the $0$ of
    \begin{equation}
        f : z \mapsto z - T(\bar \lambda_B) + \bar \lambda_B q  z^{q-1} .
    \end{equation}
    Using the expression of $T(\bar \lambda_B)$, $f$ (the derivative of the scalar proximal objective) rewrites as
    \begin{equation}
        f(z) = z - \frac{1}{2}\frac{2-q}{1-q}(2 \bar \lambda_B (1-q))^{\frac{1}{2-q}}  +  \bar \lambda_B q  z^{q-1} .
    \end{equation}
    Consider $M := (2 \bar \lambda_B (1-q))^{\frac{1}{2-q}}$.
    Then,
    \begin{align}
        f(M) & = M - T(\bar \lambda_B) + \bar \lambda_B q M^{q-1} \\
        & = M (1 + \bar \lambda_B q M^{q-2} ) - T(\bar \lambda_B) \\
        & = M \left( \frac{1}{2} \frac{2-q}{ 1-q} \right) - T(\bar \lambda_B) =0
    \end{align}
    So, $\rho^+(T(\bar \lambda_B), \bar \lambda_B) = M$. Moreover, one can see that $m(\lambda) = (\lambda q (1-q))^{\frac{1}{2-q}}$ from~\eqref{eq:m_lamb}.
    Then, $\rho^+(T(\bar \lambda_B), \bar \lambda_B) > m(\bar \lambda_{B_1})$ is equivalent to $\bar \lambda_B > \bar \lambda_{B_1} \frac q 2$. We can verify that this inequality holds true under the assumption made on the sequence $(\lambda_i)_i$ in \Cref{example:extension_concave}.
    \begin{align*}
        \bar \lambda_B & = \frac{1}{\card{B}} \sum_{i=l}^r \Lambda(i) \\
         & =  \frac{1}{\card{B}} \sum_{i=l}^r \int_i^{i+1} \Lambda(i) dt \\
        & \geq  \frac{1}{\card{B}} \sum_{i=l}^r \int_i^{i+1} \Lambda(t) dt \mkern-20mu  & \mkern-20mu \text{($\Lambda$ non-increasing)} \\
        &  = \frac{1}{\card{B}} \int_{l}^{r+1} \Lambda(t) dt   \\
        & \geq \frac{\Lambda(l)+ \Lambda(r+1)}{2} \mkern-20mu & \mkern-20mu \text{(Hermite inequality)} \\
        & \geq  \frac{\Lambda(l)}{2} \mkern-20mu &  \mkern-20mu\text{($\Lambda$ non-negative)} \\
        & > \frac q 2 \Lambda(l) \mkern-20mu & \mkern-20mu (q<1)\\
        & \geq \frac q 2 \frac{1}{\card{B}} \sum_{i=l}^j \Lambda(i) \mkern-30mu & \mkern-30mu  \text{($\Lambda$ non-increasing)} \\
        & = \frac q 2 \bar \lambda_{B_1}
    \end{align*}
\end{proof}

 \end{multicols}
\end{document}